\newcommand{\RR}{\mathbb{R}}		
\newcommand{\ZZ}{\mathbb{Z}}
\newcommand{\eps}{\varepsilon}
\newcommand{\Sigmadue}{{\{0,1\}^{\mathbb{Z}}}}
\newtheorem{theorem}{Theorem}[section]
\newtheorem{lemma}[theorem]{Lemma}
\newtheorem{proposition}[theorem]{Proposition}
\theoremstyle{definition}
\newtheorem{remark}{Remark}[section]
\title{\bf \Large Positive solutions with a complex behavior for superlinear indefinite ODEs on the real line}
\author{%
Vivina Barutello\footnote{Dipartimento di Matematica, Universit\`a degli Studi
di Torino, Via Carlo Alberto, 10,  10123 Torino,
Italy. e-mail: \texttt{vivina.barutello@unito.it}}
\and
Alberto Boscaggin\footnote{Dipartimento di Matematica e Applicazioni, Universit\`a degli Studi
di Milano-Bicocca, Via Cozzi, 53, 20125 Milano,
Italy. e-mail: \texttt{alberto.boscaggin@unimib.it}}
\and
Gianmaria Verzini\footnote{Dipartimento di Matematica, Politecnico di Milano, Piazza
Leonardo da Vinci, 32,  20133 Milano, Italy.
e-mail: \texttt{gianmaria.verzini@polimi.it}}
}
\date{\today}
\begin{document}

\maketitle

\normalsize
\begin{abstract}
We show the existence of infinitely many positive solutions, defined on the real line,
for the nonlinear scalar ODE
\[
\ddot u + (a^+(t) - \mu a^-(t)) u^3 = 0,
\]
where $a$ is a periodic, sign-changing function, and the parameter $\mu>0$ is large. Such solutions
are characterized by the fact of being either small or large in each interval of positivity of $a$.
In this way, we find periodic solutions, having minimal period arbitrarily large, and bounded non-periodic
solutions, exhibiting a complex behavior. The proof is variational, exploiting suitable natural constraints
of Nehari type.
\end{abstract}

\noindent
{\footnotesize \textbf{AMS-Subject Classification}}. {\footnotesize 34B18, 34C25, 34C28, 37J45}\\
{\footnotesize \textbf{Keywords}}. {\footnotesize Periodic and subharmonic solutions, natural constraints, Nehari method, singularly perturbed problems}

\section{Introduction}

In this paper, we deal with the existence of \emph{positive} bounded solutions, with a complex behavior,
of the nonlinear scalar ODE
\begin{equation}\label{eq_intro}
\ddot u + q(t)u^3 = 0, \qquad t\in\RR,
\end{equation}
where $q(t)$ is a bounded and $T$-periodic function (for some $T > 0$) which changes its sign.
According to a terminology which is now quite standard in this setting (see \cite{HesKat80}), equation
\eqref{eq_intro} is thus \emph{superlinear indefinite}.

When understanding $t$ as a space variable, equation \eqref{eq_intro} can be seen as a toy model
of the elliptic PDE
\begin{equation}\label{eqPDE}
\Delta u + \lambda u + q(x)u^p = 0, \qquad x \in \Omega \subset \mathbb{R}^d,
\end{equation}
with $\lambda \in \mathbb{R}$ and $p > 1$, which in turns arises when searching for steady states of the corresponding
evolutionary parabolic problem (see \cite{Ack09} for a recent survey on the topic). Such kind of equations has a
typical interpretation in the context of population dynamics, with the unknown $u$ playing the role of
density of a species inhabiting the spatially heterogeneous domain $\Omega$%
%\footnote{Notice that, following the tradition
%for ordinary differential equations, in \eqref{eq_intro} we use the letter $t$ for
%the independent variable.
%Unfortunately this can create some confusion here,
%when discussing the biological interpretation of the model.}
. Accordingly, the (indefinite) sign of the coefficient
$q$ expresses saturation or autocatalytic behavior of the species $u$, when $q \leq 0$ or $q \geq 0$ respectively.
Classical existence results, obtained both with topological and
 variational methods,
for positive solutions of boundary value problems associated with \eqref{eqPDE}
can be found among others in \cite{AlaTar93,AmaLop98,BerCapNir94,
BerCapNir95,GauHabZan03b,Lop00}.

Wishing to investigate the complexity of the solution set for \eqref{eqPDE},
a typical strategy requires to play with the nodal behavior
of the weight function $q$ and/or - in the PDE case - with the shape of
$\Omega$, so as to regard \eqref{eq_intro}-\eqref{eqPDE}
as singular perturbation problems (see \cite{Dan88,Dan90,GomLop00}). For instance,
in \cite{GauHabZan03,GauHabZan04} Gaudenzi, Habets and Zanolin
dealt with the ODE \eqref{eq_intro} assuming that
$$
q(t) = a_\mu(t) := a^+(t) - \mu a^-(t),
$$
with $\mu$ a real parameter and $a^+$, $a^-$ the positive and the negative part of
a sign-changing function $a$, and they proved the existence of multiple positive solutions
for
\begin{equation}\label{eqmain_intro}
\ddot u + a_\mu(t) u^3 = 0
\end{equation}
when $\mu \gg 1$. More precisely, they showed that the two-point boundary value
problem $u(0) = u(T) = 0$ associated with \eqref{eqmain_intro} has at least $2^n - 1$ positive solutions (for $\mu$ large)
whenever the weight function $a$ has $n$ disjoint intervals of positivity in $[0,T]$ (separated by intervals of negativity).
The number $2^n - 1$ comes from the possibility of prescribing, for a positive solution
of \eqref{eqmain_intro}, the behavior on each interval of positivity of $a$
among two possible ones: either the solution is ``small'' or is ``large''
(notice that the solution small on all the intervals is excluded, since it corresponds to the trivial one).
Such a result, which was originally proved with a shooting technique, has later
been generalized, using variational tools, by Bonheure, Gomes and Habets \cite{BonGomHab05} and by
Gir{\~a}o and Gomes \cite{GirGom09,GirGom09b} to the Dirichlet problem for the corresponding elliptic PDE.
Very recently, a topological approach for the ODE case has also been proposed by Feltrin and
Zanolin \cite{FelZanPP}.

It is the aim of the present paper to show that such kind of results
has a natural analogue when the equation \eqref{eqmain_intro} is considered
on an infinite interval (with $a$ a $T$-periodic function).
That is, we can still produce, for $\mu$ sufficiently large, positive solutions
of \eqref{eqmain_intro} defined on the real line and being either small or large on the intervals of positivity
of the weight function $a$ according to a prescribed rule
(which now involves the behavior of solutions on infinitely many intervals).
More precisely, we prove the
existence of positive $mT$-periodic solutions to \eqref{eqmain_intro} for
any integer $m \geq 1$ (namely,
positive subharmonic solutions)
and, eventually, of bounded non-periodic solutions
with a complex behavior, according to a typical scheme of ``chaotic dynamics''.
Here is a simplified statement of our main existence result (for more details see
Section \ref{sec:mainres}).
\begin{theorem}\label{thmain_intro}
Let $a$ be $L^\infty(\RR)$, $T$-periodic, and such that, for every $i\in\ZZ$,
\[
a(t) \geq 0, \;\text{ a.e. on } I^+_i, \qquad
a(t) \leq 0, \;\text{ a.e. on } I^-_i,
\]
(but not identically zero) where the consecutive closed intervals $I_i^+,I_i^-$ are such that $[0,T]$
is the union of a finite number of them.

For every integer $k \geq 1$, there exists
$\mu^* > 0$ such that, for every $\mu > \mu^*$
and for every double-sequence $\mathcal{L} \in \Sigmadue$ containing strings of zeroes of length
at most $k$,
the equation \eqref{eqmain_intro} has a positive solution $u \in W^{2,\infty}(\mathbb{R})$
such that, for every $i \in \ZZ$,
\begin{equation}\label{gobbe_intro}
u|_{I^+_i} \quad \mbox{ is ``small'' if } \; \;\mathcal{L}_i = 0
\qquad \mbox{ and } \qquad
u|_{I^+_i} \quad \mbox{ is ``large'' if } \; \;\mathcal{L}_i = 1
\end{equation}
(and $u|_{I^-_i}$ is ``small'' for every $i$).

Moreover, such a solution can be chosen to be
periodic whenever the sequence $\mathcal{L}$ is periodic.
\end{theorem}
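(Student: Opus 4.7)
The plan is variational, using Nehari-type natural constraints as announced in the abstract. I would proceed in two main stages: first producing periodic solutions when $\mathcal{L}$ is itself periodic, and then deriving the general bounded solutions by a compactness argument on periodic approximations of $\mathcal{L}$.

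\textbf{Stage 1: Periodic solutions via multi-Nehari constraints.}
Suppose first that $\mathcal{L}$ is periodic, so that the target solution should be $mT$-periodic for some $m \geq 1$. I would consider the action
\[
J_\mu(u) = \int_0^{mT} \Bigl(\tfrac{1}{2}\dot u^2 - \tfrac{1}{4} a_\mu(t) u^4\Bigr)\,dt
\]
on the space of $mT$-periodic $H^1$ functions. For each positivity interval $I_i^+ \subset [0,mT]$ with $\mathcal{L}_i = 1$, I would impose a Nehari-type constraint $N_i(u) = 0$, tailored so that $u|_{I_i^+}$ is forced to have $H^1$-size bounded away from zero---morally $\int_{I_i^+}\dot u^2 = \int_{I_i^+}a^+u^4$, up to boundary corrections that are negligible as $\mu\to\infty$. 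No constraint is imposed on the remaining positivity intervals, nor on any negativity interval, leaving $u$ free to be ``small'' there. Minimizing $J_\mu$ on the resulting Nehari set $\mathcal{N}_\Lambda$, with $\Lambda = \{i : \mathcal{L}_i = 1\}$, should produce, for $\mu$ large, a positive minimizer $u_\mu$ which in fact solves \eqref{eqmain_intro}. The critical check is that $\mathcal{N}_\Lambda$ is a \emph{natural} constraint, i.e., the Lagrange multipliers associated with each $N_i$ all vanish at the minimizer; this should follow from the mutual disjointness of the supports $I_i^+$ together with a standard transversality argument from the Nehari method.

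\textbf{Stage 2: Passage to non-periodic $\mathcal{L}$.}
For general $\mathcal{L} \in \Sigmadue$ with zero-runs of length at most $k$, I would truncate $\mathcal{L}$ to $[-N,N]$, periodically extend to obtain $\mathcal{L}^{(N)}$, apply Stage~1 to get a periodic solution $u_N$ for each $N$, and pass to the limit $N \to \infty$ via Arzel\`a-Ascoli, obtaining $u \in W^{2,\infty}(\RR)$ which solves \eqref{eqmain_intro} with the pattern $\mathcal{L}$. This requires $W^{2,\infty}$-bounds on $u_N$ uniform in $N$. Upper bounds should come from the Nehari identity together with a comparison with the ground state of the homogeneous Dirichlet problem on each $I_i^+$. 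A uniform \emph{lower} bound on the ``small'' parts, needed to ensure that the limit $u$ is strictly positive on all of $\RR$, is where the hypothesis on zero-runs plays its decisive role: on any maximal run of ``small'' positivity intervals, of length at most $k$, an exponential-decay analysis on the interspersed negativity intervals (where the equation reduces to $\ddot u = \mu a^-(t) u^3$) prevents $u_N$ from collapsing to zero along the approximation sequence.

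\textbf{Main obstacle.}
The hardest part I anticipate is the naturality of $\mathcal{N}_\Lambda$ in Stage~1: while the vanishing of a single Nehari multiplier is classical, handling many constraints simultaneously in a periodic (rather than Dirichlet) setting requires care, particularly as the number of constraints grows with the period $m$. A secondary but closely related difficulty, intrinsic to Stage~2, is the uniform-in-$N$ lower bound on the ``small'' parts; without the hypothesis of bounded zero-runs, these parts might shrink to zero along the approximation sequence and the limit $u$ would fail to be strictly positive on all of $\RR$. This is precisely why the theorem quantifies over $\mathcal{L}$ with zero-runs bounded by $k$ rather than arbitrary $\mathcal{L} \in \Sigmadue$.
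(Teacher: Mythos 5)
Your two-stage architecture (periodic case first, then $N\to\infty$ via Arzel\`a--Ascoli) matches the paper's, and the variational/Nehari philosophy is correct; however, the way you set up the constraint in Stage~1 leaves two genuine gaps, and the role you assign to the hypothesis on zero-runs is not the one it actually plays.

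First, if you impose only the scalar Nehari constraints $N_i(u)=0$ for $i$ with $\mathcal{L}_i=1$ and leave $u$ unconstrained on the remaining positivity intervals, then $J_\mu$ is \emph{unbounded below} on your set $\mathcal{N}_\Lambda$: taking $u$ small (or fixed) on the intervals indexed by $L$ while blowing up on some $I_i^+$ with $i\notin L$, the term $-\tfrac14\int a^+u^4$ dominates and $J_\mu\to-\infty$. The paper avoids this by using a constraint $\mathcal{P}_\mu$ that projects the full gradient onto $V^+\oplus V^-_u$, where $V^+$ is the infinite-dimensional subspace of periodic $H^1$ functions vanishing on $\cup_{i\in L}I_i^+$; this forces $u$ to \emph{solve the ODE} on all the remaining intervals, and then the open condition $\int_{I_i^+}\dot u^2 < r^2$ for $i\notin L$ (condition (C1)) is added and shown to be preserved. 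Only with this infinite-codimensional structure is $J_\mu$ bounded below on the constraint (it equals $\tfrac14\int\dot u^2$ there), and the ``naturality'' check becomes precisely the hypotheses (i)--(iv) of the abstract framework of Noris--Verzini, not the classical vanishing-multiplier argument; the delicate item is the coercivity/anticoercivity of $J_\mu''$ on $V^+$ and $V^-_u$ respectively. Second, and related: you do not address how to produce a minimizer or a Palais--Smale sequence that does not escape through the boundary of the (open) constraint set. In the paper this is a substantial part of the work (Sections~4--6): Ekeland's principle yields a minimizing sequence, and a bespoke local variation is constructed near each face of $\partial\mathcal{N}_\mu$ (using auxiliary Dirichlet problems on blocks of consecutive small intervals) to show that the sequence stays interior.

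Finally, the hypothesis that zero-runs in $\mathcal{L}$ have length at most $k$ is \emph{not} used to obtain a lower bound on the ``small'' parts via exponential decay (in fact the small parts do vanish as $\mu\to\infty$; strict positivity for fixed $\mu$ follows from much softer convexity/ODE-uniqueness arguments once one knows $u\not\equiv 0$). The $k$-bound is what makes $\mu^*$ independent of the period $m$, and hence of $N$: the verification that $J_\mu''$ is uniformly positive definite on $V^+$ uses a Poincar\'e-type inequality on each connected component $\Delta_l$ of $I_N\setminus\cup_{i\in L}I_i^+$, and $|\Delta_l|\leq (k+1)T$ is exactly what keeps those constants uniform. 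Similarly, the auxiliary boundary value problem of Section~5 is set on blocks containing at most $k$ unconstrained positivity intervals, again so that the resulting estimates are $N$-independent. Without the $k$-bound the Poincar\'e constants degenerate and the constraint may fail to be a nondegenerate natural constraint with $\mu^*$ uniform in $m$; that, not collapse of the small parts, is why the hypothesis is needed.
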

It is worth mentioning that some results dealing with complex dynamics for ODEs with indefinite weight
have already appeared (see, among others, \cite{BosZan12,CapDamPap02,PapZan02,TerVer00}).
However, \cite{CapDamPap02,PapZan02,TerVer00} deal with \emph{oscillatory} solutions
of the superlinear indefinite equation \eqref{eq_intro}. On the other hand, a chaotic dynamics
entirely made by positive solutions for an equation like $\ddot u + q(t)g(u) = 0$ is produced
in \cite{BosZan12}, but
in the case of a nonlinearity $g(u)$ of \emph{super-sublinear} type;
the topological technique (the so-called ``Stretching Along the Paths'' method)
used therein, moreover, is not applicable in the present context.

In our result, periodic solutions play a crucial role and, indeed,
bounded non-periodic solutions are constructed as limit of periodic ones
when the period goes to infinity.
In doing this, a careful analysis has to be performed in order to ensure that the constant $\mu^*$
for which $mT$-periodic solutions are available can be chosen independently on the integer $m$,
and thus making possible the passage to the limit $m \to +\infty$. We stress that this procedure
(obtaining chaotic solutions as limit of periodic solutions) does not seem to be easily reproducible
using the approaches proposed in
\cite{BonGomHab05,FelZanPP,GauHabZan03,GauHabZan04,GirGom09,GirGom09b}.

Our proof of the existence of periodic solutions is variational: we exploit the fact that solutions
of \eqref{eqmain_intro} are critical points of the related action functional $J$, and we construct a
suitable natural constraint for such a functional, that is, a constraint for which constrained critical
points of $J$ are free ones. The most famous natural constraint is the \emph{Nehari manifold}, which
can be successfully used in order to find positive solutions of the two-point boundary value problem
for \eqref{eqmain_intro} when restricted to intervals where $a$ is non negative. Indeed,
as shown in \cite{MooNeh59,Neh61}, letting
\[
\mathcal{N}_i=\left\{ u \in H^1_0(I_i^+) : \; u\not\equiv 0, \; \int_{I_i^+} \dot u^2 =
\int_{I_i^+} a^+ u^4 \right\} \qquad\text{ and }\qquad
c_i = \inf_{u \in \mathcal{N}} \frac{1}{4}\int_{I_i^+} \dot u^2,
\]
we have that the set
\begin{equation} \label{eq:intro_nehari}
\mathcal{K}_i = \left\{ u \in \mathcal{N}_i :
\frac{1}{4}\int_{I_i^+} \dot u^2 = c_i
\right\}
\subset W^{2,\infty}(I^+_i)
\end{equation}
consists of one-sign solutions of \eqref{eqmain_intro} with homogeneous Dirichlet boundary
conditions on $I_i^+$. Under this perspective,
one may read Theorem \ref{thmain_intro} as a singular perturbation result, where the singular
limit of the solutions we find, as $\mu\to+\infty$, is the set
\begin{equation} \label{eq:intro_nehari_tante}
\mathcal{K}_{\mathcal{L}} = \left\{u\in W^{1,\infty}(\RR) : u|_{I^+_i} \in\mathcal{K}_i,\,
u|_{I^+_i}>0
\text{ if }\mathcal{L}_i=1\text{ and }u\equiv0\text{ elsewhere}\right\}.
\end{equation}
In fact, we consider a periodic truncation of such a set, and we deform it to a suitable
Nehari-type constraint, showing that minima of the action functional on such a set, when $\mu$
is large, correspond to the desired periodic orbits. The same idea was already exploited in
\cite{BonGomHab05,GirGom09,GirGom09b}, even though such papers concern the PDE setting, with
Dirichlet boundary conditions on bounded domains, and it is not clear how to modify the
arguments there, in order to treat periodic conditions, and to obtain uniform estimates
so that one can pass to the limit to unbounded domains. To overcome this difficulty, we
rely on an abstract result contained in \cite{NorVer13}: this avoids the necessity of
constructing a projection operator to the constraint, which is usually one of the most
delicate parts when dealing with Nehari-type arguments. The pay-off of such a method is
that it provides with sharp localization of the solutions, and it allows to prove optimal
bounds, uniform as $\mu\to+\infty$.
\begin{theorem}\label{thmain_intro_conv}
In the assumptions of Theorem \ref{thmain_intro}, let $\mathcal{L}$ be fixed, and for
every $\mu>\mu^*$ let $u_\mu$ denote the corresponding solution. Then there exists
a constant $C=C(a,\mathcal{L})$, not depending on $\mu$, such that
\[
\|u_\mu\|_{C^{0,1}(\RR)}<C.
\]
Furthermore, up to subsequences,
\[
u_\mu \to \bar u \in \mathcal{K}_{\mathcal{L}},\qquad\text{ in }C^{0,\alpha}(\RR),
\text{ for every }\alpha<1,
\]
and the convergence is true also in $H^1_{\mathrm{loc}}(\RR)$, and in $W^{2,\infty}_{\mathrm{loc}}$
away from the points where the function $a$ changes sign.
\end{theorem}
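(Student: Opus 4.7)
The starting point is the variational characterization of $u_\mu$ from the proof of Theorem~\ref{thmain_intro}: $u_\mu$ is a minimizer of the action functional $J_\mu$ on a Nehari-type constraint modelled on $\mathcal{K}_\mathcal{L}$. The first step is to produce a uniform upper bound for the critical value by testing with an appropriate element of $\mathcal{K}_\mathcal{L}$, extended by zero on the intervals $I_i^-$ and on the $I_i^+$ with $\mathcal{L}_i=0$; such a competitor does not depend on $\mu$, and yields $J_\mu(u_\mu)\le M$ on every window of length $T$. Via the Nehari identity $\int\dot u_\mu^2=\int a_\mu u_\mu^4$, this delivers a uniform local $H^1$ bound, together with the integrated smallness $\mu\int_{I_i^-}a^- u_\mu^4\le M$ that will be central later.

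From the $H^1_{\mathrm{loc}}$ bound an $L^\infty$ bound on each $I_i^+$ follows, because restricted to $I_i^+$ the equation reduces to $\ddot u+a^+u^3=0$, which does not feel $\mu$; Sobolev embedding on a bounded interval together with $T$-periodicity of $a$ (leaving only finitely many geometric types of intervals) yields $\|u_\mu\|_{L^\infty(\RR)}\le C$ uniformly. The delicate step is the uniform Lipschitz bound, which requires taming $\ddot u_\mu=\mu a^-(t)u_\mu^3$ on the negative intervals. The key observation is that on $I_i^-$ one has $\ddot u_\mu\ge 0$, so $u_\mu$ is convex and attains its maximum on $I_i^-$ at the endpoints. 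It therefore suffices to show that the trace of $u_\mu$ on $\partial I_i^-$ is $O(\mu^{-1/3})$, uniformly in $i$: a phase-plane analysis of $\ddot u=\mu a^- u^3$, combined with a matching condition at the boundary between $I_i^-$ and the adjacent $I_i^+$ (on which $\dot u_\mu$ is already uniformly bounded), delivers such a decay, using as input the integrated smallness from the first step. Plugging this bound back into the equation produces a uniform bound on $\ddot u_\mu$ on $\RR$, and interpolation yields the uniform $C^{0,1}$ bound.

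With $\|u_\mu\|_{C^{0,1}(\RR)}\le C$ in hand, Ascoli--Arzel\`a provides a subsequence converging in $C^{0,\alpha}_{\mathrm{loc}}(\RR)$ for every $\alpha<1$ to some $\bar u\in W^{1,\infty}(\RR)$; upgrading to global $C^{0,\alpha}(\RR)$ convergence relies on the uniform-in-$i$ smallness of $u_\mu$ on $I_i^-$ and on the ``small'' $I_i^+$, which transfers to $\bar u\equiv 0$ on those intervals. Passing to the limit in the equation on the remaining $I_i^+$, together with lower semicontinuity of the Dirichlet energy tested against the variational characterization of $c_i$, identifies $\bar u|_{I_i^+}\in\mathcal{K}_i$, so that $\bar u\in\mathcal{K}_\mathcal{L}$. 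Convergence of the energies along the subsequence then upgrades the convergence to $H^1_{\mathrm{loc}}$, and on any compact subset away from the sign-changing set of $a$ the right-hand side of the equation is uniformly bounded, delivering $W^{2,\infty}_{\mathrm{loc}}$ convergence there.

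\textbf{Main obstacle.} The sharp quantitative decay $\|u_\mu\|_{L^\infty(I_i^-)}=O(\mu^{-1/3})$ on the negative intervals is the crux: without it, $\ddot u_\mu$ could in principle be unbounded, creating boundary layers of vanishing width and destroying the Lipschitz bound. The sharp localization properties of the abstract variational framework of \cite{NorVer13}, highlighted in the introduction, are precisely what make this estimate available uniformly in $\mu$ (and in the period $m$, so that chaotic solutions can later be extracted as limits of periodic ones).
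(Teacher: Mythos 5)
The central flaw in your argument is the claim that the trace of $u_\mu$ on $\partial I_i^-$ is $O(\mu^{-1/3})$, and the conclusion that therefore $\ddot u_\mu=\mu a^-u_\mu^3$ is bounded uniformly in $\mu$. Both parts fail, and for a structural reason: a uniform bound on $\ddot u_\mu$ would make $(u_\mu)_\mu$ precompact in $C^{1,\alpha}_{\mathrm{loc}}$ for every $\alpha<1$, forcing $C^1_{\mathrm{loc}}$ convergence of $u_\mu$ to $\bar u$. But the paper explicitly notes (just below the statement of Theorem \ref{thmain_intro_conv}) that elements of $\mathcal{K}_\mathcal{L}$ cannot be $C^1$: a nonzero $\bar u|_{I^+_i}\in\mathcal{K}_i$ has $\dot{\bar u}(\tau_i^-)<0$, while $\bar u\equiv 0$ on $I_i^-$ gives $\dot{\bar u}(\tau_i^+)=0$, so $\dot{\bar u}$ jumps. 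Thus $\ddot u_\mu$ cannot be bounded uniformly. Concretely, if one tries to extract a decay rate for $u_\mu(\tau_i)$ from the argument of Lemma \ref{upiccola}, one gets $K\gtrsim\mu\,\eps^3 \int_{\tau}^{\tau+c\eps}\!\int_\tau^t a^-(s)\,ds\,dt$; since the double integral vanishes at least like $\eps^2$ (and faster if $a^-$ vanishes at $\tau$), the trace decays no faster than $\mu^{-1/5}$, not $\mu^{-1/3}$, and the corresponding estimate $\ddot u_\mu=\mu a^-u_\mu^3$ is unbounded as $\mu\to+\infty$. (The exponent $1/3$ in Lemma \ref{decay} is an \emph{interior} decay, with a constant $C_\delta\to\infty$ as $\delta\to0$; it does not control the trace.)

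The actual proof of the uniform $C^{0,1}$ bound is much more elementary and entirely bypasses $\ddot u_\mu$: by the convexity/concavity of $u_\mu$ on each $I_i^\pm$, $|\dot u_\mu|$ can have local maxima only at $t\in\partial I_i^+$ with $i\in L$, where property (C4) of $\mathcal{N}_\mu$ gives $|\dot u_\mu|\leq\rho$ directly (this is estimate \eqref{eq:C1bounds} of Theorem \ref{thmain}). You should also be careful about the upgrade from local to global $C^{0,\alpha}$ convergence: Ascoli--Arzel\`a gives only $C^{0,\alpha}_{\mathrm{loc}}$. The paper obtains global $C^{0,\alpha}(\RR)$ convergence by interpolation, writing for $w=u_\mu-\bar u$
\[
\frac{|w(t)-w(s)|}{|t-s|^\alpha}\leq(2\rho)^\alpha\,|w(t)-w(s)|^{1-\alpha}\to 0,
\]
which requires both the uniform Lipschitz constant $\rho$ (valid for $u_\mu$ by \eqref{eq:C1bounds} and for $\bar u$ by Remark \ref{rem:K_soddisfa_rho}) and uniform convergence $\|w\|_{L^\infty(\RR)}\to 0$, the latter coming from the quantitative properties (P1), (P2), (P3). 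Your sketch replaces this short chain with a re-derivation of $H^1$ and $L^\infty$ bounds from scratch; this is redundant since all the required uniform estimates, including the Lipschitz bound, are already in the conclusion of Theorem \ref{thmain}. The part of your proposal concerning $H^1_{\mathrm{loc}}$ and $W^{2,\infty}_{\mathrm{loc}}$ (away from the sign changes of $a$) convergence is essentially in line with the paper, which deduces these directly from (P1)--(P3) and Lemma \ref{decay}.
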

As we mentioned, the bounds above are optimal: since $u_\mu$ is $C^1$,
were the convergence $C^{0,1}$, also $\bar u$ would be $C^1$; this is impossible, since
the elements of $\mathcal{K}_\mathcal{L}$ cannot be $C^1$ (if $\mathcal{L}_i\neq0$
for at least one $i$).

Variational methods were already successfully exploited to construct entire complex
solutions of nonlinear oscillators in \cite{TerVer00,Ver03,OrtVer04,SoaVer14}. Though
also the methods employed in these papers are related to the results by Nehari, in particular to a broken geodesics
argument, that situation is rather different: the solutions found there are oscillatory, and the
uniform energy estimates to pass from bounded intervals to the real line are obtained through
a control of the distance between consecutive zeroes. Rather, we borrow some ideas from
\cite{TerVer09}, where radial positive multi-bump solutions to a singularly perturbed elliptic
system are investigated.

To conclude, we remark that slight variants of our technique can be invoked
to prove some related results (see Remark \ref{rem:general_results} at the end of the paper).
First of all, one can consider functions $a$ changing sign with some uniform properties, rather
than periodic ones; further, also changing sign solutions can be constructed, by choosing sequences
$\mathcal{L} \in \{-1,0,1\}^{\ZZ}$ and prescribing $u|_{I^+_i}$ to be large and positive (resp.
negative) whenever $\mathcal{L}_i=1$ (resp. $-1$). Moreover, we can prove
the existence of $2^n-1$ positive solutions (or $3^n-1$ nontrivial ones, possibly changing
sign) to the $T$-periodic boundary value problem
(that is, $u(0) = u(T)$ and $\dot u(0) = \dot u(T)$) associated with \eqref{eqmain_intro},
whenever the function $a$ has $n$ intervals of positivity in a period.
This gives a $T$-periodic counterpart of the result first proved by Gaudenzi, Habets and Zanolin
for the Dirichlet problem. The Neumann boundary value problem $\dot u(0) = \dot u(T) = 0$
could be also successfully considered, using very similar arguments, and thus extending \cite{Bos11}.
We stress, however, that all these results, dealing with a boundary value problem on a finite
interval,
can be obtained with much easier arguments (on the lines of the main application in \cite{NorVer13})
with respect to the ones described in this paper, whose crucial theme insists on finding estimates
for the threshold $\mu^*$, which are independent of the size of the considered interval.

\section{Main result and strategy of the proof}\label{sec:mainres}

In this paper we deal with the existence of positive solutions of the superlinear indefinite equation
\begin{equation}\label{eqmain}
\ddot u + a_\mu(t) u^3 = 0,\qquad t\in\RR,
\end{equation}
where $\mu > 0$ is a large parameter and
$$
a_\mu(t + T) = a_\mu(t) := a^+(t) - \mu a^-(t), \qquad \text{for every } t,
$$
with \(a^+(t) = \max(0,a(t))\) and  \(a^-(t) = \max(0,-a(t))\) denoting the positive
and negative part of a sign-changing, $T$-periodic function $a \in L^{\infty}(\RR)$.
For the sake of simplicity, we assume that $a$ changes sign just once in $[0,T]$,
that is:
\begin{itemize}
\item[(A)] there exists $\tau \in \,]0,T[\,$ such that
$$
a(t) \geq 0, \not\equiv 0 \;\text{ on } [0,\tau], \qquad
a(t) \leq 0, \not\equiv 0 \;\text{ on } [\tau,T]
$$
\end{itemize}
(even though we can treat much more general situations, see Remark \ref{rem:general_results}
at the end of the paper). Up to a time-translation and a suitable choice of $\tau$ we can suppose that
\begin{equation}\label{eq:cond_int_estremi}
\int_{\tau}^{\tau+\delta}a^-(t)\,dt >0
\qquad \text{and} \qquad
\int^{T}_{T-\delta}a^-(t)\,dt >0,
\end{equation}
for every small $\delta > 0$.

From now on, we also use the notation
$$
\sigma_i = iT, \quad \tau_i = \tau + iT, \quad I^+_i = [\sigma_i, \tau_i],
\quad I^-_i = [\tau_i,\sigma_{i+1}], \quad \mbox{ for every } i \in \mathbb{Z}.
$$

Our main result reads as follows. In the statement below,
$\Sigmadue$ denotes the space of double-sequences of $0$ and $1$.
Moreover, for $\mathcal{L} = \{\mathcal{L}_i\}_{i \in \mathbb{Z}} \in \Sigmadue$, we set
$$
\textbf{0}_{\mathcal{L}} = \sup \left\{ n \in \mathbb{N} \; : \; \exists i \in \mathbb{Z} \, \mbox{ s.t. }
\mathcal{L}_j = 0, \; \forall j=i,\ldots,i+n-1 \right\},
$$
namely, the maximal length of strings in $\mathcal{L}$ entirely composed by $0$.

\begin{theorem}\label{thmain}
For every integer $k \geq 1$ there exists
$\mu^* =\mu^*(k) > 0$ such that, for every $\mu > \mu^*$
and for every $\mathcal{L} \in \Sigmadue$ with
\begin{equation}\label{ipotesil}
\textbf{0}_{\mathcal{L}} \leq k,
\end{equation}
equation \eqref{eqmain} has a positive solution $u \in W^{2,\infty}(\mathbb{R})$
such that, for every $i \in \ZZ$,
\begin{equation}\label{gobbe}
\int_{I^+_i}\dot u^2 <r^2 \mbox{ if } \; \;\mathcal{L}_i = 0
\qquad \mbox{ and } \qquad
\int_{I^+_i}\dot u^2 >r^2 \mbox{ if } \; \;\mathcal{L}_i = 1,
\end{equation}
and
\begin{equation}\label{eq:C1bounds}
\Vert \dot u \Vert_{L^\infty(\RR)} \leq \rho,
\end{equation}
where $r$ and $\rho$ are positive explicit constants, only depending on the weight $a$.

More precisely: for any $\eps > 0$ there exists $\mu^*=\mu^*(k,\eps) \geq \mu^*(k)$, with
$\mu^*(k,\eps) \to +\infty$ for $\eps \to 0^+$, such that the solution $u$ fulfills,
for any $i \in \mathbb{Z}$,
\begin{itemize}
\item[{(P1)}] $\Vert u \Vert_{L^\infty(I^-_i)} + \int_{I^-_i} \dot u^2 \leq \eps$,
\item[{(P2)}] $\Vert u \Vert_{W^{2,\infty}(I^+_i)} \leq \eps$ if $\mathcal{L}_i = 0$,
\item[{(P3)}] $\mathrm{dist}_{W^{2,\infty}}(u|_{I^+_i},\mathcal{K}_i ) \leq \eps$ if
$\mathcal{L}_i = 1$,
where $\mathcal{K}_i$ is defined in \eqref{eq:intro_nehari}.
\end{itemize}

Finally, the solution can be chosen to be $mT$-periodic whenever the sequence $\mathcal{L}$ is $m$-periodic for some $m \in \mathbb{N}$.
\end{theorem}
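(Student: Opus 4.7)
The plan is to first prove the theorem when $\mathcal{L}$ is $m$-periodic by a constrained variational argument on the torus $\mathbb{R}/mT\mathbb{Z}$, and then obtain solutions for a general $\mathcal{L}$ as a $C^1_{\mathrm{loc}}$-limit when $m\to+\infty$. The essential difficulty is keeping the threshold $\mu^\ast$ independent of $m$, so that the limit can actually be performed.

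For the periodic step, on the Hilbert space $H^1_{mT}$ of $mT$-periodic $H^1_{\mathrm{loc}}$ functions I would consider the action functional
\[
J_\mu(u)=\tfrac{1}{2}\int_0^{mT}\dot u^2-\tfrac{1}{4}\int_0^{mT}a_\mu (u^+)^4,
\]
whose nontrivial critical points are non-negative $mT$-periodic solutions of \eqref{eqmain}. Since the expected shape in the limit $\mu\to+\infty$ is an element of $\mathcal{K}_{\mathcal{L}}$, namely a sum of Nehari ground states supported on the $I_i^+$ with $\mathcal{L}_i=1$, I would build a Nehari-type submanifold $\mathcal{M}_{\mathcal{L},\mu}\subset H^1_{mT}$ whose defining conditions impose, on each $I_i^+$ with $\mathcal{L}_i=1$, a local Nehari-type relation (of the form $\partial_\lambda J_\mu(\lambda u)=0$ acting only on the bump over that interval), together with qualitative smallness conditions on the remaining blocks that encode the dichotomy in \eqref{gobbe}. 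A minimizer of $J_\mu$ over $\mathcal{M}_{\mathcal{L},\mu}$ would then be the candidate periodic solution: the abstract result in \cite{NorVer13} certifies that any constrained minimizer lying in the \emph{interior} of the admissible region is automatically a free critical point of $J_\mu$, bypassing the explicit construction of a projection operator onto the constraint (which is in this setting the usually most delicate step).

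The heart of the proof, and the main obstacle, is to verify this interior condition and to establish (P1)-(P3) with thresholds independent of $m$. I would argue by contradiction: if along some sequence $\mu_n\to+\infty$ one of (P1)-(P3) failed on some block, then up to translation one could extract a local limit $\bar u$ which, thanks to the penalization by $\mu_n a^-$, must be supported on the ``large'' intervals and coincide on each of them with a Nehari ground state, contradicting the minimality (or yielding back the expected constrained minimum). The uniformity in $m$ hinges on two intrinsically local facts: the ground-state energy $c_i$ depends only on the single interval $I_i^+$, and the exponential-type decay induced by $\mu a^-$ on each interval of negativity has rate independent of $m$. The hypothesis $\mathbf{0}_\mathcal{L}\leq k$ bounds the length of runs of zeros, so a propagation argument can transfer smallness from each ``large'' bump across at most $k$ consecutive ``small'' blocks without accumulation. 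Once (P1)-(P3) hold with $m$-independent thresholds, the Lipschitz bound \eqref{eq:C1bounds} follows from standard ODE regularity.

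For a general $\mathcal{L}\in\Sigmadue$ with $\mathbf{0}_\mathcal{L}\leq k$, I would approximate it by periodic truncations $\mathcal{L}^{(m)}$ that agree with $\mathcal{L}$ on $\{-m,\dots,m\}$, apply the previous steps to obtain $m_jT$-periodic solutions $u_{m_j}$ enjoying uniform-in-$m_j$ $C^1$ bounds, and extract by Arzel\`a--Ascoli a subsequence converging in $C^1_{\mathrm{loc}}(\mathbb{R})$ to some $u\in W^{2,\infty}(\mathbb{R})$. The limit solves \eqref{eqmain} pointwise and inherits \eqref{gobbe}, \eqref{eq:C1bounds} and (P1)-(P3) from the approximants, completing the proof.
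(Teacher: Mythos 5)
Your overall strategy — a Nehari-type constraint, the abstract natural-constraint result of \cite{NorVer13}, uniform-in-$m$ thresholds, and a periodic-to-entire limit via Arzel\`a--Ascoli — coincides with the paper's. However, the proposal has a genuine gap where the real work lies: you do not address how to actually produce a constrained critical point once the constraint is an \emph{open} set. The paper's constraint $\mathcal{N}_\mu$ is defined by a projection condition ($\mathcal{P}_\mu$) intersected with open inequalities (C1)--(C4) that encode the dichotomy in \eqref{gobbe}, the positivity, the $L^\infty$ bound, and the derivative bound at the junctions. A priori the infimum of $J_\mu$ over $\mathcal{N}_\mu$ could be approached only by sequences drifting to $\partial\mathcal{N}_\mu$, and then \cite{NorVer13} gives you nothing. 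The paper's Section 6 resolves this via Ekeland's variational principle combined with explicit local deformations that strictly decrease $J_\mu$ near each possible piece of the boundary: a rescaling argument when the energy cap $\int_{I_i^+}\dot u^2 = 2(c+c_\zeta)$ is touched (Claim~1), and a delicate gluing of variations built from Lemma~\ref{localvar} and the auxiliary boundary value problem of Proposition~\ref{propo:auxiliary} when the junction-derivative bound $|\dot u(\tau_i^{\pm})|=\rho$ is touched (Claim~2). Your ``argue by contradiction along $\mu_n\to+\infty$'' is aimed at the wrong target: it could at best give a priori bounds for critical points (which the paper also establishes, in Lemma~\ref{nehariprimeprop2}), but it does not show that a critical point exists for a \emph{fixed} large $\mu$ — that step has nothing to do with letting $\mu\to\infty$.

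Two smaller inaccuracies: the decay in the intervals of negativity is algebraic in $\mu$ (of order $\mu^{-1/3}$, Lemma~\ref{decay}), not ``exponential-type''; and the paper works with $u^4$ rather than $(u^+)^4$, recovering positivity a posteriori via the zero-counting and convexity arguments of Section~7, which is essential because the natural-constraint machinery needs the functional to be $C^2$.
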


The proof of Theorem \ref{thmain} is based on an approximation procedure which we now describe.
For each integer $N \geq 0$, consider the interval
$I_N =[\sigma_{-N},\sigma_{N+1}]$, that is,
\[
I_N = I^+_{-N} \cup I^-_{-N} \cup \ldots \cup I^+_{N} \cup I^-_{N}.
\]
Then, the following result holds true.

\begin{theorem}\label{thper}
For any integer $k \geq 1$, there exists $\mu^* > 0$ such that for every
$\mu > \mu^*$, $\mathcal{L} \in \Sigmadue$
satisfying \eqref{ipotesil} and  $N > k$, equation
\begin{equation}\label{eqper}
\ddot u + a_\mu(t) u^3 = 0, \qquad t \in I_N,
\end{equation}
has a positive solution
$u \in W^{2,\infty}_{\mathrm{per}}(I_N)$
such that properties \eqref{gobbe}, \eqref{eq:C1bounds}, (P1), (P2) and (P3) hold,
for $i = -N, \ldots,N$.
\end{theorem}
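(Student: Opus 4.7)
The plan is variational. Work on the Hilbert space $H^1_{\mathrm{per}}(I_N)$ and seek the desired periodic solution as a minimum of the action
\[
J_\mu(u) = \frac12\int_{I_N}\dot u^2 - \frac14\int_{I_N} a_\mu(t) u^4
\]
on a Nehari-type constraint encoding the ``large/small'' pattern prescribed by $\mathcal{L}$. Because $J_\mu$ is invariant under $u \mapsto |u|$, we may restrict to the cone $\{u\geq 0\}$; the strong maximum principle applied a posteriori to \eqref{eqper} then upgrades a nonnegative solution into a strictly positive one wherever relevant.

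To construct the constraint, for every $i\in\{-N,\ldots,N\}$ with $\mathcal{L}_i=1$ introduce the partial Nehari functional
\[
\psi_i(u) := \int_{I^+_i} \dot u^2 - \int_{I^+_i} a^+ u^4,
\]
and fix $r>0$, depending only on $a$, so small that every $v\in H^1_0(I^+_i)\setminus\{0\}$ with $\int_{I^+_i}\dot v^2 \leq r^2$ satisfies $\psi_i(v)>0$; this is the constant appearing in \eqref{gobbe}. Define
\[
\mathcal{M}_{N,\mathcal{L}} := \left\{u\in H^1_{\mathrm{per}}(I_N):\, u\geq 0,\ \psi_i(u)=0\ \forall i\text{ with }\mathcal{L}_i=1,\ \int_{I^+_i}\dot u^2\leq r^2\ \forall i\text{ with }\mathcal{L}_i=0 \right\}.
\]
Since the $I^+_i$ are pairwise disjoint the differentials $d\psi_i$ have disjoint supports, so the abstract natural-constraint theorem of \cite{NorVer13} applies: every minimizer of $J_\mu|_{\mathcal{M}_{N,\mathcal{L}}}$ whose inequality constraints are strictly satisfied is a free critical point of $J_\mu$, hence a classical solution of \eqref{eqper}. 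Existence of such a minimizer $u_\mu\geq 0$ follows by the direct method, the Nehari identities combined with the $r^2$-truncations yielding coercivity and weak closedness of the constraint.

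The core difficulty, and the whole point of the theorem, is to show that the threshold $\mu^*$ beyond which $u_\mu$ strictly satisfies all inequality constraints (and hence solves the ODE) can be chosen independently of $N$. First, testing $J_\mu$ on the trial configuration obtained by gluing the one-sign Nehari optimizers on each $I^+_i$ with $\mathcal{L}_i=1$ (extended by zero) yields the uniform upper bound $J_\mu(u_\mu) \leq \sum_{i:\,\mathcal{L}_i=1} c_i$. In particular $\mu\int_{I^-_i} a^- u_\mu^4$ is controlled by a constant depending only on $a$, uniformly in $i$ and $N$; combined with \eqref{eq:cond_int_estremi} this forces $u_\mu$ to vanish pointwise at both endpoints of every $I^-_i$ as $\mu\to\infty$. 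Here the hypothesis $\textbf{0}_{\mathcal{L}}\leq k$ enters decisively: on any string of at most $k$ consecutive intervals of positivity with $\mathcal{L}_i=0$ the equation reduces to $\ddot u + a^+ u^3=0$ driven by small Cauchy data at each boundary point, and a Gronwall-type ODE estimate, whose constant depends only on $\|a\|_{L^\infty}$ and $k$, propagates the smallness through the entire string, yielding (P1) and (P2) with $\varepsilon=\varepsilon(\mu)\to 0$ uniformly in $i$ and $N$.

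Once (P1) and (P2) hold for $\mu$ large, $u_\mu$ lies in the relative interior of the $r^2$-truncations and is therefore a free critical point of $J_\mu$, solving \eqref{eqper}; property (P3) then follows from the compactness of the local Nehari set $\mathcal{K}_i$ (a compact, finite-dimensional manifold in this scalar setting) together with elliptic bootstrap from the equation, while the $C^1$ bound \eqref{eq:C1bounds} is a direct consequence of integrating \eqref{eqper} using the uniform $L^\infty$ control on $u_\mu$. The main obstacle, as anticipated, is the uniform-in-$N$ propagation of smallness: the variational problem couples all intervals through periodicity, and only the bound $\textbf{0}_{\mathcal{L}}\leq k$ makes it possible to decouple the Nehari minima on the ``large'' intervals from the long-range effects of the constraint.
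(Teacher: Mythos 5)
Your proposal correctly identifies the overall variational strategy and the main obstacle (uniformity of $\mu^*$ in $N$), but it contains three genuine gaps that the paper spends most of its effort closing.

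\textbf{The constraint is not natural in the periodic setting.} You minimize on $\{\psi_i(u)=0,\ i\in L\}$ with $\psi_i(u)=\int_{I^+_i}(\dot u^2 - a^+u^4)$, and you invoke \cite{NorVer13} citing the disjointness of the supports. But \cite{NorVer13} applies to constraints of the form $\mathrm{proj}_{V_u}\nabla J_\mu(u)=0$ with $V_u^- = \mathrm{span}\{\xi_i(u)\}$ for \emph{smooth} maps $\xi_i$; it is not a theorem about level-set constraints. The paper's choice $\xi_i(u)=\eta_iu$ with cutoffs $\eta_i$ supported in a neighborhood of $I^+_i$ is precisely what makes the construction work: $\eta_iu\in H^1_{\mathrm{per}}(I_N)$ for every $u$, so testing along $\eta_iu$ produces no boundary terms. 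Your $\psi_i$ is a hard restriction to $I^+_i$; its differential $d\psi_i(u)[v]=2\int_{I^+_i}(\dot u\dot v-2a^+u^3v)$ involves the derivative of $v$ restricted to $I^+_i$ only, and after integrating by parts produces boundary contributions at $\sigma_i,\tau_i$. A Lagrange multiplier for your constraint would therefore generically yield a function with a jump in $\dot u$ at $\sigma_i$ and $\tau_i$, which is not a solution of \eqref{eqper}. In the Dirichlet problem of \cite{BonGomHab05,GirGom09,GirGom09b} this issue is invisible because $u$ vanishes at the endpoints; in the periodic problem it is the central difficulty, and it forces the paper to encode derivative bounds (condition (C4)) and to prove the naturality via Theorem \ref{teo:nv}.

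\textbf{Bounded energy does not force smallness at the endpoints of $I^-_i$.} You argue that $\mu\int_{I^-_i}a^-u_\mu^4\leq C$ together with \eqref{eq:cond_int_estremi} ``forces $u_\mu$ to vanish pointwise at both endpoints of every $I^-_i$.'' This is false: solutions of $\ddot u-\mu a^- u^3=0$ are convex where $u>0$, so one can have $u(\tau_i),u(\sigma_{i+1})$ of order one and $u$ of order $\mu^{-1/3}$ in the interior of $I^-_i$, in which case $\mu\int_{I^-_i}a^-u^4$ stays bounded. The paper's Lemma \ref{decay} gives decay in the \emph{interior} of $I^-_i$ only; smallness at the endpoints (Lemma \ref{upiccola}) requires an a priori bound on the boundary derivative, which is precisely what (C4) provides and what your constraint set does not encode. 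Without it, (P1) and (P2) do not propagate.

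\textbf{Existence of a minimizer is not automatic.} The map $u\mapsto\psi_i(u)$ is not weakly continuous on $H^1_{\mathrm{per}}$ (the $\int\dot u^2$ term is only lower semicontinuous), so a weak limit of a minimizing sequence may leave the constraint, and the usual Nehari scaling trick to restore $\psi_i=0$ is not available here: multiplying $u|_{I^+_i}$ by $\lambda<1$ destroys $C^0$-continuity at $\sigma_i$ and $\tau_i$. The paper instead produces a constrained Palais-Smale sequence by Ekeland's principle and then carries out a delicate analysis (Sections 5 and 6, in particular the auxiliary boundary value problem \eqref{connection} and the local variations of Lemma \ref{localvar}) to show the sequence stays away from $\partial\mathcal{N}_\mu$. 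This boundary analysis has no counterpart in your outline and cannot be replaced by the ``Gronwall-type'' step, which addresses a different (and, as noted above, incorrectly justified) issue.
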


Notice that, whenever the sequence $\mathcal{L}$ in
Theorem \ref{thmain} is $m$-periodic, with $m$ an odd integer number,
Theorem \ref{thper} gives the existence of a positive $mT$-periodic solution
to \eqref{eqmain}. The case in which $m$ is an even integer can be handled in a completely analogous way by considering,
for $N \geq 1$, the interval $\widetilde I_N = [\sigma_{-N},\sigma_{N-1}]$.

On the other hand, for non-periodic sequences $\mathcal{L}$, the
corresponding positive solution $u = u_{\mathcal{L}} \in W^{2,\infty}(\mathbb{R})$
of Theorem \ref{thmain}
can be constructed as limit, for $N \to +\infty$, of the solutions
$u_{\mathcal{L},N} \in W^{2,\infty}_{\mathrm{per}}(I_N) $ found in Theorem \ref{thper}.
More details for this (quite standard) argument will be given at the end of the paper,
in Section \ref{secfinale}.
\medbreak
From now on, we will concentrate on Theorem \ref{thper},
whose proof will take a great part of the paper. It relies on a variational argument, consisting in the minimization of the action functional
$$
J_{\mu,I_N}(u) = \frac{1}{2}\int_{I_N}\dot u^2 - \frac{1}{4}\int_{I_N}a_{\mu}u^4
$$
on suitable Nehari-type subset of $H^1_{\mathrm{per}}(I_N)$
(in the following, not to overload the notation, we will often
drop the subscript $I_N$ when no confusion is possible).
To describe our procedure, we need some preliminary notation.

First of all, we define a cut-off function
$\eta \in C^{\infty}_c(\mathbb{R})$ such that $0 \leq \eta(t) \leq 1$, for every $t \in \mathbb{R}$ and
\begin{equation*}
\eta \equiv 1 \; \mbox{ on } [0,\tau] \qquad \mbox{ and } \qquad \eta \equiv 0 \;\mbox{ on } \mathbb{R} \setminus \left[\frac{\tau-T}{4},\tau+\frac{T-\tau}{4}\right].
\end{equation*}
Moreover, we set
$$
\eta_i(t) = \eta(t - \sigma_i), \qquad \mbox{ for every } i \in \mathbb{Z},
$$
so as to obtain a family of cut-off functions $\{\eta_i\}_{i \in \mathbb{Z}}$ such that
$\eta_i \eta_j \equiv 0$ whenever $i \neq j$.

Next, we turn to introduce the Nehari-type constraint, which depends on $N$ and $\mathcal{L}$.
Setting
\[
L = \left\{ i \in \{-N,\ldots,N\} : \mathcal{L}_i=1 \right\},
\]
we define the subspaces
\[
V^+ = \left\{ u \in H^1_{\mathrm{per}}(I_N) : \; u \equiv 0 \mbox{ on } \bigcup_{i \in L}I^+_i \right\},
\]
and, for any $u \in H^1_{\mathrm{per}}(I_N)$,
\[
V^-_u = \mathrm{span}_{i \in L}\{ \eta_i u \}.
\]
The Nehari-type set is then defined as
\begin{equation}\label{nehari}
\mathcal{P}_\mu = \left\{ u \in H^1_{\mathrm{per}}(I_N) \; : \; \mathrm{proj}_{\left(V^+ \oplus V^-_u\right)} \nabla J_{\mu}(u) = 0 \right\}.
\end{equation}
The next result collects some properties enjoyed by the functions in $\mathcal{P}_\mu$.
\begin{lemma}\label{carattN}
Let $u \in \mathcal{P}_\mu$.
Then (all integrals are understood on $I_N$):
\begin{itemize}
\item[(i)] $\int \dot u \dot v = \int a_{\mu} u^3 v$ for every $v \in V^+$. In particular,
$u \in W^{2,\infty}\left(I_N \setminus \cup_{i \in L} I^+_i\right)$
and
$$
\ddot u + a_{\mu}(t)u^3 = 0, \quad \mbox{ on } I_N \setminus \cup_{i \in L} I^+_i.
$$
\item[(ii)] $\int \dot u \dot{(\eta_i u)} = \int a_{\mu}\eta_i u^4$ for every $i\in L$;
equivalently,
$$
\int_{I_i^+} \left(\dot u^2 - a^+ u^4\right) = u(\tau_i)\dot u(\tau_i^+) - u(\sigma_i)\dot u(\sigma_i^-).
$$
Here, $\dot u(\sigma_i^-)$ and $\dot u(\tau_i^+)$ are respectively the left derivative of $u$ in $\sigma_i$ and the right derivative
of $u$ in $\tau_i$, whose existence is guaranteed by the previous point (i).
\item[(iii)] $\int \dot u^2 = \int a_{\mu} u^4$; hence
$$
J_{\mu}(u) = \frac14 \int_{I_N} \dot u^2 = \frac14 \int_{I_N} a_{\mu} u^4.
$$
\item[(iv)] $\int \dot u \dot{(\eta_i^2 u)} = \int a_{\mu} \eta_i^2 u^4$ for every $i= -N,\dots,N$; as a consequence
$$
\int_{I_N} \dot{(\eta_i u)}^2
= \int_{I_N} a_{\mu} \eta_i^2 u^4 + \int_{I_N} \dot{\eta_i}^2 u^2.
$$
\end{itemize}
\end{lemma}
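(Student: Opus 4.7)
The starting point of the plan is to unfold the defining condition of $\mathcal{P}_\mu$: a function $u\in H^1_{\mathrm{per}}(I_N)$ lies in $\mathcal{P}_\mu$ exactly when
$\int_{I_N}\dot u\,\dot v - \int_{I_N} a_\mu u^3 v = 0$
for every $v\in V^+\oplus V^-_u$. All four items will then follow by carefully choosing $v$ and exploiting the disjoint-support structure of the cut-offs $\eta_i$, which by construction satisfy $\eta_i\equiv 1$ on $I^+_i$ and $\mathrm{supp}\,\eta_i\subset I^-_{i-1}\cup I^+_i\cup I^-_i$ with $\eta_i\eta_j\equiv 0$ for $i\ne j$.

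For \emph{(i)}, I simply restrict the above identity to test functions $v\in V^+$. Since any periodic $v\in C^\infty_c(I_N\setminus\cup_{i\in L}I^+_i)$ belongs to $V^+$, $u$ is a weak solution of $\ddot u+a_\mu u^3=0$ on that open set; as $a_\mu\in L^\infty$, the standard bootstrap gives $u\in W^{2,\infty}$ on $I_N\setminus\cup_{i\in L}I^+_i$, and in particular one-sided derivatives $\dot u(\sigma_i^-)$, $\dot u(\tau_i^+)$ exist for $i\in L$. For \emph{(ii)}, I test with $v=\eta_i u\in V^-_u$, which yields the first formula immediately. To get the boundary-term form I integrate by parts on the two "buffer" intervals $[\sigma_i-(T-\tau)/4,\sigma_i]$ and $[\tau_i,\tau_i+(T-\tau)/4]$ where $\eta_i$ is supported outside $I^+_i$; on these buffers $u$ solves the ODE by \emph{(i)}, so $\int\ddot u\,(\eta_i u)=-\int a_\mu u^3(\eta_i u)$ cancels the corresponding contribution from $\int a_\mu\eta_i u^4$, leaving only the boundary terms at $\sigma_i$ and $\tau_i$ (the outer boundary terms vanish because $\eta_i$ does). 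On $I^+_i$ itself $\eta_i\equiv 1$, so $\int_{I^+_i}\dot u\,\dot{(\eta_i u)}=\int_{I^+_i}\dot u^2$ and $a_\mu=a^+$, giving the stated identity.

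For \emph{(iii)}, the key observation is the decomposition
\[
u = \Bigl(u - \sum_{i\in L}\eta_i u\Bigr) + \sum_{i\in L}\eta_i u,
\]
whose first summand vanishes on every $I^+_i$ with $i\in L$ (since $\eta_i\equiv 1$ there) and so lies in $V^+$, while the second is in $V^-_u$ by definition. Testing the Nehari identity with $u$ itself thus gives $\int\dot u^2=\int a_\mu u^4$ and the energy formula for $J_\mu(u)$ follows. For \emph{(iv)}, I again split into cases. If $i\notin L$, then $\mathrm{supp}\,\eta_i$ meets no $I^+_j$ with $j\in L$, so $\eta_i^2 u\in V^+$ directly. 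If $i\in L$, I write $\eta_i^2 u = (\eta_i^2 u-\eta_i u)+\eta_i u$; the first summand vanishes on $I^+_i$ (where $\eta_i=1$) and on all other $I^+_j$ with $j\in L$ (where $\eta_i=0$), so it belongs to $V^+$, while the second is in $V^-_u$. In either case the Nehari identity applied to $\eta_i^2 u$ yields $\int\dot u\,\dot{(\eta_i^2 u)}=\int a_\mu\eta_i^2 u^4$, and expanding $\dot{(\eta_i u)}^2=\dot\eta_i^2 u^2+2\eta_i\dot\eta_i u\dot u+\eta_i^2\dot u^2$ and $\dot u\,\dot{(\eta_i^2 u)}=2\eta_i\dot\eta_i u\dot u+\eta_i^2\dot u^2$ shows that the two differ exactly by $\int\dot\eta_i^2 u^2$, giving the claimed identity.

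The proof is essentially bookkeeping, so the main (mild) obstacle is verifying cleanly that the support of each $\eta_i$ is localized to $I^-_{i-1}\cup I^+_i\cup I^-_i$, so that the decompositions land in the correct subspaces $V^+$ and $V^-_u$ and the integration-by-parts in \emph{(ii)} produces no spurious boundary contributions. Everything else is an immediate consequence of the Nehari testing condition combined with the regularity obtained in \emph{(i)}.
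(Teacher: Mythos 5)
Your proof is correct and follows essentially the same strategy as the paper's own argument: unpack the orthogonality condition defining $\mathcal{P}_\mu$, test against suitable elements of $V^+\oplus V^-_u$, and in items (iii) and (iv) use precisely the decompositions $u=(1-\sum_{i\in L}\eta_i)u+\sum_{i\in L}\eta_i u$ and $\eta_i^2 u=\eta_i(\eta_i-1)u+\eta_i u$ that the paper uses. The only cosmetic difference is that you spell out the integration-by-parts cancellation in (ii) and the expansion in (iv) a bit more explicitly than the paper does.
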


\begin{proof}
We prove separately each point.
\begin{itemize}
\item[\textit{(i)}] This corresponds to $\mathrm{proj}_{V^+}\nabla J_{\mu}(u) = 0$.
In particular, this implies that $u$ solves $\ddot u + a_{\mu}(t)u^3 = 0$
in the sense of distributions on $I_N \setminus \cup_{i \in L} {I}^+_i$,
and the second claim follows by elliptic regularity.
\item[\textit{(ii)}] The first equality corresponds to $\mathrm{proj}_{V^-_u}\nabla J_{\mu}(u) = 0$. Denoting $\mathrm{supp} (\eta_i) = [\sigma_i',\tau_i']$, such an equality writes as
\[
\begin{split}
\int_{I_i^+} \left(\dot u^2 - a^+ u^4\right)  &=
- \int_{[\sigma_i',\sigma_i]\cup[\tau_i,\tau_i']} \dot u \dot{(\eta_i u)} - a_{\mu}\eta_i u^4 \\
& =- \int_{[\sigma_i',\sigma_i]\cup[\tau_i,\tau_i']}
     \left(-\ddot u - a_{\mu} u^3\right) (\eta_i u) +  u(\tau_i)\dot u(\tau_i^+) - u(\sigma_i)\dot u(\sigma_i^-),
\end{split}
\]
and the first term vanishes by {(i)}.
\item[\textit{(iii)}] The identity is equivalent to $\langle \nabla J_{\mu}(u), u \rangle = 0$, then we just need to show that $u \in V^+ \oplus V^-_u$. Let us write $u \in H^1_{\mathrm{per}}(I_N)$ as
$$
u = \sum_{i \in L} \eta_i u +\left(1-\sum_{i \in L} \eta_i  \right)u;
$$
then the first term lies in $V^-_u$, the second one in $V^+$.
\item[\textit{(iv)}] Similarly we need to prove that $\eta^2_iu \in V^+ \oplus V^-_u$, for every $i= -N,\dots,N$.
When $i \notin L$, we immediately conclude that $\eta^2_iu \in V^+$. On the other hand, when $i \in L$ we write
$$
\eta^2_iu = \eta_i(\eta_i - 1)u + \eta_i u;
$$
the first term belongs to $V^+$, while the second one to $V^-_u$.
\end{itemize}
\end{proof}
Notice that, in view of Lemma \ref{carattN} (iii), the functional
$J_\mu$ is bounded below on $\mathcal{P}_\mu$. Our minimization problem will be settled in an open subset
$\mathcal{N}_\mu \subset \mathcal{P}_\mu$.
To describe it, we need to introduce some notation.
First of all, define for some suitable $\zeta \in (0,(T-\tau)/2)$
\begin{equation}\label{localnehari}
\begin{array}{l}
\vspace{0.3cm}
\mathcal{N} = \left\{ u \in H^1_0(0,\tau) : \; u\not\equiv 0, \; \int_0^{\tau} \dot u^2 = \int_0^{\tau} a^+ u^4 \right\} \\
\mathcal{Z}_\zeta =
\left\{ u \in \mathcal{N} \; : \;
\exists \bar{t} \in [\zeta,\tau-\zeta] \mbox{ s.t. } u(\bar{t}) = 0
\right\}.
\end{array}
\end{equation}
and set
\begin{equation}\label{localneharimin}
c = \inf_{u \in \mathcal{N}} \frac{1}{4}\int_0^{\tau} \dot u^2 \qquad \mbox{ and } \qquad c_\zeta = \inf_{u \in \mathcal{Z}_\zeta}
\frac{1}{4}\int_0^{\tau} \dot u^2.
\end{equation}
We claim here that
$$
c < c_\zeta.
$$
Indeed, notice first that $c \leq c_\zeta$ since $\mathcal{Z}_\zeta \subset \mathcal{N}$.
Then, observe that $c$ and $c_\zeta$ are both attained
(as infimum value of the corresponding minimizing problems): this is well known for $c$,
and the same proof works also for $c_\zeta$. Since functions attaining the value $c$
are (non-trivial) constant-sign $H^1_0$-solutions
of $\ddot u + a^+ u^3 = 0$ on $(0,\tau)$ \cite{MooNeh59,Neh61}, which are not in $\mathcal{Z}_\zeta$,
we conclude $c \neq c_\zeta$.

Then, for suitable constants $r,K,\rho > 0$, consider the following conditions
\begin{itemize}
\item[(C1)] $\int_{I^+_i} \dot u^2 < r^2$ for $i \notin L$
and $r^2 < \int_{I^+_i} \dot u^2 < 2 (c + c_\zeta)$ for $i \in L$,
\item[(C2)] $u(t) > 0$ for every $t \in \bigcup_{i \in L}[\sigma_i + \zeta, \tau_i - \zeta]$,
\item[(C3)]\label{(C3)} $\vert u(t) \vert < K$ for every $t \in \bigcup_{i=-N}^N I^-_i$,
\item[(C4)] for $i \in L$,
$$
\begin{array}{llllll}
\dot{u}(\sigma_i^-)<\rho & \mbox{if } u(\sigma_i) \geq 0 & \mbox{ and } & \dot{u}(\sigma_i^-)>-\rho & \mbox{if } u(\sigma_i) \leq 0 \\
\dot{u}(\tau^+_i)>-\rho & \mbox{if } u(\tau_i) \geq 0 & \mbox{ and } & \dot{u}(\tau^+_i)<\rho & \mbox{if } u(\tau_i) \leq 0,
\end{array}
$$
\end{itemize}
and set
\begin{equation}\label{eq:defnehari}
\mathcal{N}_\mu = \left\{ u \in \mathcal{P}_\mu  : \;
u \mbox{ satisfies (C1), (C2), (C3), (C4)} \right\}.
\end{equation}
The precise value for $\zeta$, $r$ and $\rho$ will be given in
\eqref{sceltazeta}, \eqref{eq:r} and \eqref{sceltarho} respectively.
As for $K$, \label{(C1)} the choice is more arbitrary, since (as it will be clear from the proof)
any positive value for it works, up to enlarging $\mu^*$.
However, a natural choice can be made by recalling (see, for instance, \cite[Lemma 4.3]{GauHabZan03}) that
positive solutions to \eqref{eqper} are $L^{\infty}$ a-priori bounded,
independently on both $\mu > 0$ and $N$.

\begin{remark}\label{rem:soluzionilocali}
A few comments on the set $\mathcal{N}_{\mu}$ are now in order. First, the derivatives involved in (C4) are well defined since $u$ solves \eqref{eqper} on each $I^-_i$; furthermore such a condition is open in the $H^1$-topology.
Second, the set $\mathcal{N}_\mu$ is non-empty since, recalling \eqref{eq:intro_nehari_tante},
\[
\mathcal{K}_\mathcal{L} \subset \mathcal{N}_\mu,
\]
provided $r^2 < 4c$.
\end{remark}

As a first step towards Theorem \ref{thper}, we have the following
result, whose proof will be given in Section \ref{secproofnehari},
taking advantage of some technical lemmas developed in
Section \ref{sectecnica}, as well as of the main result in \cite{NorVer13}.

\begin{proposition}\label{neharidef}
There exists $\mu^* > 0$ (depending on the weight function $a$ and on
the integer $k$, but not on $\mathcal{L}$ and $N$) such that, for any $\mu > \mu^*$,
the set $\mathcal{N}_\mu$ is a $C^1$ embedded submanifold of $H^1_{\mathrm{per}}(I_N)$
such that any constrained Palais-Smale sequence is a free one.
That is, if $(u_n) \subset \mathcal{N}_\mu$ is such that $J_\mu(u_n)$ is bounded and
$\nabla_{\mathcal{N}_\mu}J_\mu(u_n) \to 0$, then $\nabla J_\mu(u_n) \to 0$ as well.
\end{proposition}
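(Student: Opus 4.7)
The plan is to apply the abstract natural-constraint criterion of \cite{NorVer13}, whose main appeal is that it bypasses the explicit construction of a projection onto the constraint. I would realize $\mathcal{N}_\mu$ as the zero set of a $C^1$ map $G_\mu\colon\mathcal{O}\to V^+\times\RR^{|L|}$ on the open subset $\mathcal{O}\subset H^1_{\mathrm{per}}(I_N)$ cut out by (C1)--(C4), with components
\[
\langle G_\mu^+(u),v\rangle=\langle\nabla J_\mu(u),v\rangle\ \ (v\in V^+),\qquad (G_\mu^-)_i(u)=\langle\nabla J_\mu(u),\eta_iu\rangle\ \ (i\in L).
\]
The abstract theorem then yields both the $C^1$-embedded submanifold structure and the natural-constraint property once one checks that, at every $u\in\mathcal{N}_\mu$, the restriction of $D^2 J_\mu(u)$ to $V^+\oplus V^-_u$ is an isomorphism onto its dual, with norms of the inverse bounded uniformly in $\mu$, $\mathcal{L}$, $N$; this in turn forces the Lagrange multipliers associated to any constrained Palais--Smale sequence to vanish.

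The verification splits into a positive block on $V^+$, a negative block on $V^-_u$, and a small cross term. Starting from
\[
\langle D^2J_\mu(u)\varphi,\varphi\rangle=\int_{I_N}\dot\varphi^2-3\int_{I_N}a_\mu u^2\varphi^2,
\]
on $V^+$ each $v$ vanishes on $\bigcup_{i\in L}I^+_i$, so the potential splits as $3\mu\sum_i\int_{I^-_i}a^-u^2v^2\geq 0$ plus a residual $-3\sum_{j\notin L}\int_{I^+_j}a^+u^2v^2$; the latter is controlled by $o(1)\|\dot v\|_{L^2}^2$ as $\mu\to+\infty$, using the smallness of $\|u\|_{L^\infty(I^+_j)}$ forced by (C1), (C3) and the boundary-layer behaviour on adjacent negative intervals. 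On $V^-_u$ the functions $\eta_iu$ have pairwise disjoint supports (since $\eta_i\eta_j\equiv 0$ for $i\neq j$), so the Gram matrix of $D^2J_\mu(u)$ in the basis $\{\eta_iu\}_{i\in L}$ is diagonal; by Lemma \ref{carattN}(iv) its $i$-th entry equals
\[
-2\int_{I_N}a_\mu\eta_i^2u^4+\int_{I_N}\dot{\eta}_i^2u^2,
\]
and the lower bound $\int_{I^+_i}\dot u^2>r^2$ from (C1), combined with Lemma \ref{carattN}(ii) at the endpoints of $I^+_i$, makes this at most $-2r^2+o(1)$. Finally, the cross block lives only on the buffer zones $[\sigma_i',\sigma_i]\cup[\tau_i,\tau_i']\subset I^-_{i-1}\cup I^-_i$, where $u$ is small, and is $o(1)$ as $\mu\to+\infty$. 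A Schur-complement argument on this sign-definite $2\times 2$ block system then provides the required uniform isomorphism.

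The hard part is to make all these ``smallness'' bounds on $u$ on the small-bump and negative intervals quantitative and \emph{uniform} in $N$ and $\mathcal{L}$. The assumption $\mathbf{0}_\mathcal{L}\leq k$ is precisely what enables this: in $V^+$ one may face strings of up to $k$ consecutive small-bump intervals, and the boundary-layer smallness of $u$ must be propagated across such a chain by estimates that do not see the position of the chain inside $I_N$. Producing these propagation estimates is exactly the role of the preliminary lemmas in Section \ref{sectecnica}. Granted those, the application of the abstract theorem of \cite{NorVer13} is routine and yields Proposition \ref{neharidef} with a threshold $\mu^*$ depending only on $a$ and $k$.
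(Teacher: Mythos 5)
You correctly identify the right tool (the natural-constraint criterion of \cite{NorVer13}) and the right shape of the verification: sign-definiteness of $J''_\mu$ on $V^+$ and on $V^-_u$ separately, exploiting the disjoint supports of the $\eta_i u$ and the role of $\mathbf{0}_\mathcal{L}\le k$ in controlling the lengths of the ``small-bump'' chains. But there is a structural issue with the way you propose to set up the application, and the paper's two-step organization is precisely designed to avoid it.

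Theorem~\ref{teo:nv} requires the constraint to be realized as $\mathcal{M}=\{x\in\mathcal{A}: \mathrm{proj}_{V_x}\nabla J(x)=0\}$ with $\mathcal{A}$ an \emph{open subset of the Hilbert space} $X=H^1_{\mathrm{per}}(I_N)$. Your ``open set $\mathcal{O}$ cut out by (C1)--(C4)'' is not such a set: condition (C4) involves the one-sided derivatives $\dot u(\sigma_i^-)$, $\dot u(\tau_i^+)$, which are not even defined for a generic $u\in H^1_{\mathrm{per}}(I_N)$ — they make sense (and depend continuously on $u$) only along $\mathcal{P}_\mu$, where $u$ solves the ODE on the $I^-_i$. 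So (C1)--(C4) do not carve out an open subset of the ambient Hilbert space, and the abstract theorem cannot be invoked in the form you write. The paper's proof sidesteps this by introducing a genuinely open $\mathcal{A}\subset H^1_{\mathrm{per}}(I_N)$ defined by three quadratic integral inequalities ($\int_{I^+_i}u^2<\delta^2$ for $i\notin L$, $\int_{I^+_i}\dot u^2>r^2$ for $i\in L$, and $3\int\dot\eta_i^2u^2<r^2$ for $i\in L$), applies Theorem~\ref{teo:nv} to $\mathcal{M}_\mu=\mathcal{A}\cap\mathcal{P}_\mu$ \emph{for every $\mu>0$} in Step~1, and only then — in Step~2 — uses the large-$\mu$ smallness estimates (Lemmas~\ref{nehariprimeprop}, \ref{nehariprimeprop2}, ultimately Lemma~\ref{upiccola}) to show that $\mathcal{N}_\mu$ is an open subset of $\mathcal{M}_\mu$. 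In particular, the Hessian verification in Step~1 is purely elementary (Poincar\'e, Sobolev, the definition of $\delta$ in terms of $k$), with constants manifestly independent of $\mu$, $\mathcal{L}$ and $N$; the $\mu\to+\infty$ asymptotics enter only through the inclusion $\mathcal{N}_\mu\subset\mathcal{M}_\mu$, not through the second-derivative bounds. This is cleaner than what you propose: your verification threads the boundary-layer smallness into the Hessian estimates themselves, which is unnecessary and muddies the uniformity-in-$\mu$ of hypothesis (iii). Finally, the ``Schur-complement argument'' you invoke at the end is superfluous — Theorem~\ref{teo:nv} only asks for $\pm J''(u)[v,v]\ge\delta'\|v\|^2$ on $V^\pm_u$ separately (hypothesis (iii)), with no control on the cross block, so there is nothing to assemble.
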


According to the above result, the argument leading to Theorem \ref{thper}
now proceeds by exhibiting a bounded constrained Palais-Smale sequence $(u_n) \subset \mathcal{N}_\mu$
at level
\begin{equation}\label{consmin}
\inf_{u \in \mathcal{N}_\mu} J_\mu(u).
\end{equation}
Indeed, Proposition \ref{neharidef} implies that this is a free bounded Palais-Smale sequence and,
since the gradient of $J_\mu$ is a compact perturbation of an invertible operator,
a (free) critical point for $J_\mu$ is obtained. This is a solution of \eqref{eqper}
having - by construction - the desired complex behavior. Sections \ref{secvariazione}
and \ref{secexistence} will be devoted to this delicate argument,
which requires a careful understanding of the behavior of $J_\mu$
near the boundary of $\mathcal{N}_\mu$.

To conclude the proof of Theorem \ref{thper}, it will be then enough to prove that the
solution found is positive, uniformly bounded in $C^1$, and it has the required properties
(P1), (P2) and (P3) for $\mu \to +\infty$.
This will be the goal of Section \ref{secfinale}, containing also
some more details for the limit $N \to +\infty$ leading to Theorem
\ref{thmain}.

\section{Some technical results}\label{sectecnica}

Throughout the paper, we will make often use of the following elementary inequality:
\begin{equation}\label{disfondamentale}
\Vert u \Vert_{L^\infty(s_1,s_2)} \leq \min_{[s_1,s_2]}\vert u \vert + \sqrt{s_2 - s_1}
\left(\int_{s_1}^{s_2} \dot u^2 \right)^{1/2}, \quad \mbox{ for every } u \in H^1(s_1,s_2).
\end{equation}
This is a simple consequence of the fundamental theorem of calculus, together with the
Cauchy-Schwartz inequality. Notice also that,
if $u$ vanishes somewhere on $[s_1,s_2]$, then
from \eqref{disfondamentale} we obtain the Sobolev-type inequality
\begin{equation}\label{sobolev}
\Vert u \Vert^2_{L^\infty(s_1,s_2)} \leq (s_2 - s_1) \Vert \dot u \Vert_{L^2(s_1,s_2)}^2
\end{equation}
and the Poincaré-type inequality
\begin{equation}\label{poincare}
\Vert u \Vert^2_{L^2(s_1,s_2)} \leq (s_2 - s_1)^2 \Vert \dot u \Vert_{L^2(s_1,s_2)}^2.
\end{equation}

\subsection{Local estimates on the solutions}

In this section, we collect some useful estimates for solutions
of the differential equation \eqref{eqmain} in an interval of positivity
of the weight function, say
\begin{equation}\label{eq+}
\ddot u + a^+(t)u^3 = 0, \qquad t \in [0,\tau],
\end{equation}
and in an interval of negativity, say
\begin{equation}\label{eq-}
\ddot u - \mu a^-(t)u^3 = 0, \qquad t \in [\tau,T].
\end{equation}
Obviously, in view of the $T$-periodicity of $a$, identical results will
hold true for solutions on the intervals $I^+_i$ and $I^-_i$ for every $i \in \mathbb{Z}$.
Let us also observe once for all that solutions $u$ of \eqref{eq+} are concave (resp., convex)
when $u \geq 0$ (resp., $u \leq 0$), while solutions $u$ of \eqref{eq-} are convex (resp., concave)
when $u \geq0$ (resp., $u \leq 0$) and satisfy
\begin{equation}\label{convess}
\vert u(t) \vert \leq \max(\vert u(\tau) \vert,\vert u(T)\vert),
\quad \mbox{ for every } t \in [\tau,T],
\end{equation}
and
\begin{equation}\label{convess_derivata}
\vert \dot u(t) \vert \leq \max(\vert \dot u(\tau^+) \vert,\vert \dot u(T^-)\vert),
\quad \mbox{ for every } t \in [\tau,T].
\end{equation}
These facts will be used several times without further comments.
\medbreak
The first result is a $C^1$ a-priori estimate in $[0,\tau]$,
given a bound for $\int_0^{\tau}\dot u^2$.
\begin{lemma}\label{ulimitata}
For every $M > 0$ there exists $M' > 0$ such that, for any $u$ solving \eqref{eq+}, it holds
$$
\int_0^{\tau} \dot u^2 \leq M \quad \Longrightarrow \quad \Vert u \Vert_{L^{\infty}(0,\tau)}
+ \Vert \dot u \Vert_{L^{\infty}(0,\tau)} \leq M'.
$$
\end{lemma}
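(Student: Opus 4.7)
The plan is to reduce everything to an $L^\infty$ bound on $u$, from which an $L^\infty$ bound on $\ddot u$ is immediate via the equation, and an $L^\infty$ bound on $\dot u$ then follows by a standard averaging argument. The genuinely delicate step is the first one, which I expect to prove by contradiction and rescaling.

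Suppose there is a sequence $(u_n)$ of solutions of \eqref{eq+} with $\int_0^\tau \dot u_n^2 \le M$ but $\lambda_n := \|u_n\|_{L^\infty(0,\tau)} \to +\infty$, and set $v_n := u_n/\lambda_n$. Then $\|v_n\|_{L^\infty} = 1$, $\int_0^\tau \dot v_n^2 \le M/\lambda_n^2 \to 0$, and $v_n$ solves
\[
\ddot v_n + \lambda_n^2\, a^+(t)\, v_n^3 = 0 \qquad \text{on } [0,\tau].
\]
Since $(v_n)$ is bounded in $H^1(0,\tau)$, the compact Sobolev embedding $H^1 \hookrightarrow C^0$ yields, up to a subsequence, uniform convergence $v_n \to v_\infty$, and the strong convergence $\dot v_n \to 0$ in $L^2$ forces $v_\infty$ to be a constant, with $|v_\infty| = \lim \|v_n\|_{L^\infty} = 1$. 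Now pick any non-negative test function $\varphi \in C^\infty_c(0,\tau)$ with $\int_0^\tau a^+\varphi > 0$, which exists because $a^+ \not\equiv 0$ on $[0,\tau]$ by hypothesis (A). Testing the weak formulation of the ODE against $\varphi$ gives
\[
\int_0^\tau \dot v_n\, \dot\varphi\, dt = \lambda_n^2 \int_0^\tau a^+\, v_n^3\, \varphi\, dt.
\]
By Cauchy--Schwarz the left-hand side is $O(\|\dot v_n\|_{L^2}) = O(1/\lambda_n) \to 0$, whereas the uniform convergence $v_n^3 \to v_\infty^3 = \pm 1$ makes the right-hand side behave like $\lambda_n^2\,\bigl(\pm \int_0^\tau a^+\varphi + o(1)\bigr)$, which blows up in absolute value. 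This contradiction proves $\|u\|_{L^\infty(0,\tau)} \le C_1$ for some $C_1 = C_1(M)$.

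With the $L^\infty$ bound in hand, the equation immediately gives $\|\ddot u\|_{L^\infty} \le \|a^+\|_{L^\infty} C_1^3$; and the identity $\dot u(t) = \dot u(s) + \int_s^t \ddot u\,dr$, averaged over $s \in [0,\tau]$ and combined with $\frac{1}{\tau}\int_0^\tau |\dot u|\,ds \le \sqrt{M/\tau}$ (Cauchy--Schwarz), yields
\[
\|\dot u\|_{L^\infty(0,\tau)} \le \sqrt{M/\tau} + \tau\,\|a^+\|_{L^\infty}\, C_1^3,
\]
which completes the bound with an $M' = M'(M)$ as required. The hard part, as anticipated, is securing that the limit constant $v_\infty$ in the scaling step is genuinely non-zero, so that the right-hand side of the tested ODE truly diverges: this is where the compact $H^1 \hookrightarrow C^0$ embedding (to upgrade weak $L^2$-vanishing of $\dot v_n$ to uniform convergence of $v_n$) together with the assumption $a^+ \not\equiv 0$ on $[0,\tau]$ enter in an essential way.
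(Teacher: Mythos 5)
Your proof is correct, but it follows a genuinely different route from the paper's. Where you argue by contradiction via a blow-up/rescaling argument (normalizing $v_n = u_n/\|u_n\|_{L^\infty}$, extracting a uniform limit by compact embedding, and then deriving a contradiction by testing the rescaled equation against a fixed bump function $\varphi$ supported where $a^+>0$), the paper proceeds directly: it observes via \eqref{disfondamentale} that it suffices to bound $\min_{[0,\tau]}|u|$, and obtains the explicit bound $\min|u|\le\sqrt{\lambda_1(a^+)}$ by pairing the equation with the principal Dirichlet eigenfunction $\varphi_1$ of $\ddot\varphi+\lambda a^+\varphi=0$ on $(0,\tau)$ and exploiting the sign of the boundary term $\dot\varphi_1(\tau)u(\tau)-\dot\varphi_1(0)u(0)$. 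The paper's comparison argument is constructive and yields a quantitative constant depending only on $\lambda_1(a^+)$ and $M$, which matters for a paper where explicit thresholds ($r$, $\rho$, $\mu^*$) are tracked throughout; your compactness argument, while valid, produces only an existence statement for the constant $C_1(M)$. Both approaches agree in the second half: the $L^\infty$ bound on $\ddot u$ follows from the equation, and $\|\dot u\|_{L^\infty}$ is controlled by the elementary averaging inequality (the paper's \eqref{diselementare}, essentially identical to yours). One small remark: your assertion that a nonnegative $\varphi\in C^\infty_c(0,\tau)$ with $\int_0^\tau a^+\varphi>0$ exists is correct but deserves the observation that $a^+\not\equiv 0$ on $[0,\tau]$ means $a^+>0$ on a set of positive measure, which necessarily meets the open interval $(0,\tau)$.
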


\begin{proof}
To see this, we first observe that it is enough to prove the boundedness of $\Vert u \Vert_{L^{\infty}(0,\tau)}$.
Indeed, since $u$ solves \eqref{eq+}, the boundedness of $\Vert \dot u \Vert_{L^{\infty}(0,\tau)}$ follows from the ones of both $u$ and $\ddot{u}$ in $L^{\infty}$, via the elementary inequality
\begin{equation}\label{diselementare}
\Vert \dot u \Vert_{L^{\infty}(0,\tau)} \leq \frac{2}{\tau}\Vert u \Vert_{L^{\infty}(0,\tau)} + \tau \Vert \ddot u \Vert_{L^{\infty}(0,\tau)}.
\end{equation}
In order to conclude, by virtue of \eqref{disfondamentale}, we just need to show that
$\min_{[0,\tau]}|u|$ is bounded. When $u$ vanishes at some point, we immediately conclude. Otherwise, we assume w.l.o.g. that $u>0$ and we consider the principal eigenvalue
$\lambda_1 = \lambda_1(a^+)$ of the problem
\[
\ddot\varphi + \lambda a^+(t)\varphi = 0, \qquad \varphi \in H^1_0(0,\tau),
\]
together with the corresponding positive eigenfunction, $\varphi_1 = \varphi_1(a^+)$. Testing with $u$ the equation solved by $\varphi_1$ and with $\varphi_1$ the equation solved by $u$, after an integration one obtains
\[
\left[\dot\varphi_1 u-\dot u \varphi_1\right]_{0}^{\tau}
 =
\int_{0}^{\tau} \left(\ddot{\varphi}_1 u - \varphi_1 \ddot u\right) =
\int_{0}^{\tau} a^+ u \varphi_1 \left(u^2 -  \lambda_1\right).
\]
Since the left hand side, $\dot \varphi_1(\tau)u(\tau)-\dot \varphi_1(0)u(0)$, is negative (indeed, $\varphi_1$ is positive and vanishes at the end-points of the interval) we conclude that
\[
\min_{t \in [0,\tau]}|u(t)| \leq \sqrt{\lambda_1},
\]
as desired.
\end{proof}
Our second result deals with the distribution of zeros of a solution on $[0,\tau]$.
To state it precisely, recall the definitions of $c$ and $c_\zeta$
in \eqref{localneharimin} and fix $\zeta > 0$ so small that
\begin{equation}\label{sceltazeta}
2\Vert a^+ \Vert_{L^{\infty}}(c+c_\zeta) \zeta^3 < 1
\end{equation}
(this is clearly possible, since $c_\zeta$ is bounded for $\zeta \to 0^+$).
This will be the value of $\zeta$ used henceforth.
\begin{lemma}\label{pochizeri}
Let $u$ be a nontrivial solution of \eqref{eq+} such that
\begin{equation}\label{livello}
\int_0^\tau \dot u^2 \leq 2(c+c_\zeta).
\end{equation}
Then, $u$ has at most one zero in $[0,\zeta]$ and at most one zero in $[\tau-\zeta,\tau]$.
\end{lemma}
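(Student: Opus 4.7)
The plan is to argue by contradiction: assume $u$ has two zeros $t_1 < t_2$ inside $[0,\zeta]$ (the case $[\tau-\zeta,\tau]$ being identical). Since $u$ is nontrivial and solves \eqref{eq+}, we may take $t_1, t_2$ to be two consecutive zeros, so that $u$ does not vanish on $(t_1,t_2)$. The restriction $u|_{[t_1,t_2]}$ then lies in $H^1_0(t_1,t_2)$ and solves the equation; this will allow me to use the Sobolev and Poincar\'e inequalities \eqref{sobolev}--\eqref{poincare} applied to the subinterval $[t_1,t_2] \subset [0,\zeta]$.

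First, I test the equation $\ddot u + a^+ u^3 = 0$ against $u$ on $[t_1,t_2]$ and integrate by parts: the boundary terms vanish because $u(t_1)=u(t_2)=0$, so
\[
\int_{t_1}^{t_2} \dot u^2 = \int_{t_1}^{t_2} a^+ u^4 \leq \|a^+\|_{L^\infty} \, \|u\|_{L^\infty(t_1,t_2)}^2 \, \|u\|_{L^2(t_1,t_2)}^2.
\]
Next, since $u$ vanishes at $t_1$, I can apply \eqref{sobolev} on $[t_1,t_2]$ and bound using the hypothesis \eqref{livello}:
\[
\|u\|_{L^\infty(t_1,t_2)}^2 \leq (t_2-t_1) \int_{t_1}^{t_2} \dot u^2 \leq \zeta \int_0^\tau \dot u^2 \leq 2\zeta(c+c_\zeta).
\]
Similarly, by \eqref{poincare} on $[t_1,t_2]$,
\[
\|u\|_{L^2(t_1,t_2)}^2 \leq (t_2-t_1)^2 \int_{t_1}^{t_2} \dot u^2 \leq \zeta^2 \int_{t_1}^{t_2} \dot u^2.
\]

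Plugging these two estimates back into the first inequality yields
\[
\int_{t_1}^{t_2} \dot u^2 \leq 2 \|a^+\|_{L^\infty} (c+c_\zeta)\, \zeta^3 \int_{t_1}^{t_2} \dot u^2.
\]
By the choice of $\zeta$ in \eqref{sceltazeta}, the multiplicative constant on the right is strictly less than $1$, so $\int_{t_1}^{t_2} \dot u^2 = 0$. Combined with $u(t_1)=0$, this gives $u\equiv 0$ on $[t_1,t_2]$, hence $\dot u(t_1)=0$ as well, and by the Cauchy uniqueness theorem applied to the ODE \eqref{eq+} we conclude $u\equiv 0$ on $[0,\tau]$, contradicting the non-triviality assumption. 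The case of $[\tau-\zeta,\tau]$ is entirely analogous. No real obstacle is expected: the whole point is that \eqref{sceltazeta} was tailored precisely to make the contraction argument above work, and the only ingredients are the two elementary inequalities \eqref{sobolev}--\eqref{poincare} together with uniqueness for the ODE.
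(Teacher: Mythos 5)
Your proof is correct and follows essentially the same strategy as the paper's: test the equation against $u$ on $[t_1,t_2]$, bound $\int_{t_1}^{t_2} a^+u^4$ by Sobolev-type inequalities using $t_2-t_1\le\zeta$ and the energy bound \eqref{livello}, and observe that \eqref{sceltazeta} makes the resulting multiplicative constant strictly below $1$. The only cosmetic difference is in how $\int u^4$ is majorized --- the paper uses $\|u\|_{L^\infty}^4$ and the crude bound $\int_{t_1}^{t_2}\dot u^2\le\int_0^\tau\dot u^2$, while you split as $\|u\|_{L^\infty}^2\|u\|_{L^2}^2$ and invoke Poincar\'e \eqref{poincare} as well --- but both routes land on the identical constant $2\|a^+\|_{L^\infty}(c+c_\zeta)\zeta^3$; your explicit appeal to Cauchy uniqueness to rule out $\int_{t_1}^{t_2}\dot u^2=0$ is a minor clarification that the paper leaves implicit in its parenthetical ``since $u\not\equiv 0$''.
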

\begin{proof}
Assume, just to fix the ideas, that there are $t_1,t_2 \in [0,\zeta]$,
with $t_1 < t_2$, such that $u(t_1) = u(t_2) = 0$. Then, from \eqref{sobolev}, we have
\[
\Vert u \Vert^4_{L^{\infty}(t_1,t_2)} \leq \zeta^2 \left(\int_{t_1}^{t_2} \dot u^2\right)^2.
\]
Multiplying equation
\eqref{eq+} by $u$ and integrating by parts
on $[t_1,t_2]$, from the above estimate and \eqref{livello}, it follows that
\begin{align*}
\int_{t_1}^{t_2} \dot{u}^2 & = \int_{t_1}^{t_2} a^+ u^4  \leq \Vert a^+ \Vert_{L^{\infty}}\zeta^3
\left(\int_{t_1}^{t_2} \dot{u}^2\right)^2\\
&  \leq \Vert a^+ \Vert_{L^{\infty}} \zeta^3
\int_{0}^{\tau} \dot{u}^2 \int_{t_1}^{t_2} \dot{u}^2 \leq 2\Vert a^+ \Vert_{L^{\infty}}(c+c_\zeta) \zeta^3
\int_{t_1}^{t_2} \dot{u}^2.
\end{align*}
In view of the choice of $\zeta$ in \eqref{sceltazeta}, we obtain a contradiction (since $u\not\equiv 0$).
\end{proof}
In the next results, we study the behavior of solutions on $[\tau,T]$.
Our first lemma in this direction plays a crucial role in the rest of the paper,
describing the behavior of solutions at the boundary of $[\tau,T]$
when the parameter $\mu$ is large.
\begin{lemma}\label{upiccola}
For every $\varepsilon > 0$ and $R > 0$, there exists $\widehat\mu > 0$ such that,
for any $\mu > \widehat\mu$ and for any solution $u$ of \eqref{eq-} such that $\Vert u \Vert_{L^\infty(\tau,T)} \leq K$ (see assumption (C3) at page \pageref{(C3)}),
$$
\left\{
\begin{array}{l}
\dot u(\tau^+) \geq -R \\
u(\tau) \geq 0
\end{array}
\right. \quad \Longrightarrow \quad u(\tau) \leq \varepsilon
\quad \mbox{ and } \quad \left\{
\begin{array}{l}
\dot u(\tau^+) \leq R \\
u(\tau) \leq 0
\end{array}
\right. \quad \Longrightarrow \quad -\varepsilon \leq u(\tau)
$$
(and analogous estimates hold for $u(T)$).
%Similarly, for any solution $u$ of $\ddot u -\mu a^-(t)u^3 =0$ on $[\tau-T,0]$ such that $\Vert u \Vert_{L^\infty(\tau-T,0)} \leq K$,
%$$
%\left\{
%\begin{array}{l}
%\dot u(0^-) \leq R \\
%u(0) \geq 0
%\end{array}
%\right. \quad \Longrightarrow \quad u(0) \leq \varepsilon
%\quad \mbox{ and } \quad
%\left\{
%\begin{array}{l}
%\dot u(0^-) \geq R \\
%u(0) \leq 0
%\end{array}
%\right. \quad \Longrightarrow \quad -\varepsilon \leq u(0).
%$$
\end{lemma}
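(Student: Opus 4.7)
The plan is to argue by contradiction, exploiting the convexity that the sign condition on $u$ forces, combined with the assumption \eqref{eq:cond_int_estremi} that $a^-$ has positive mass near the endpoints of $[\tau,T]$. I treat only the first implication at $\tau$; the remaining three are analogous (for the sign-reversed case, replace $u$ by $-u$; for the two estimates at $T$, use the second part of \eqref{eq:cond_int_estremi} and run the same argument on the time-reversed interval).

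\smallskip

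Suppose by contradiction that there exist $\varepsilon_0,R>0$, a sequence $\mu_n\to+\infty$, and solutions $u_n$ of \eqref{eq-} with $\mu=\mu_n$ satisfying $\|u_n\|_{L^\infty(\tau,T)}\le K$, $\dot u_n(\tau^+)\geq -R$, $u_n(\tau)\geq 0$, yet $u_n(\tau)>\varepsilon_0$. Let $t_n\in(\tau,T]$ be either the first zero of $u_n$ after $\tau$, or $T$ if no such zero exists. On $[\tau,t_n]$ we have $u_n\geq 0$, hence $\ddot u_n=\mu_n a^-(t)u_n^3\geq 0$, i.e., $u_n$ is convex. In particular
\[
u_n(t)\ \geq\ u_n(\tau)+\dot u_n(\tau^+)(t-\tau)\ \geq\ \varepsilon_0 - R(t-\tau),\qquad t\in[\tau,t_n].
\]
If $t_n<T$ this forces $t_n-\tau\geq \varepsilon_0/R$. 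In any case, setting $\delta_1=\min\bigl(\varepsilon_0/(2R),\,T-\tau\bigr)$, we have $\delta_1\leq t_n-\tau$ and
\[
u_n(t)\ \geq\ \varepsilon_0/2,\qquad t\in[\tau,\tau+\delta_1].
\]

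\smallskip

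Now I integrate \eqref{eq-} on $[\tau,\tau+\delta_1]$ and use \eqref{eq:cond_int_estremi}, which provides a constant $\alpha_1:=\int_\tau^{\tau+\delta_1}a^-(s)\,ds>0$:
\[
\dot u_n(\tau+\delta_1)-\dot u_n(\tau^+)\ =\ \mu_n\int_\tau^{\tau+\delta_1}a^-(s)u_n(s)^3\,ds\ \geq\ \mu_n\,(\varepsilon_0/2)^3\,\alpha_1,
\]
so $\dot u_n(\tau+\delta_1)\geq -R+\mu_n(\varepsilon_0/2)^3\alpha_1\to+\infty$. In particular, for $n$ large, $\dot u_n(\tau+\delta_1)>0$. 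Since $u_n$ remains convex as long as it stays non-negative, its derivative is non-decreasing on $[\tau+\delta_1,t_n]$, hence strictly positive there, so $u_n$ is strictly increasing on this interval and cannot vanish before $T$; thus $t_n=T$, and again by convexity
\[
u_n(T)\ \geq\ u_n(\tau+\delta_1)+\dot u_n(\tau+\delta_1)(T-\tau-\delta_1)\ \longrightarrow\ +\infty,
\]
which contradicts $\|u_n\|_{L^\infty(\tau,T)}\leq K$. This gives $u(\tau)\leq\varepsilon$ for $\mu$ large enough (depending only on $\varepsilon$, $R$, $K$ and on $a^-$ through $\alpha_1$).

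\smallskip

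The main obstacle is the bookkeeping of the sign of $u_n$ across $[\tau,T]$: the convexity estimate is only valid on the maximal interval on which $u_n\geq 0$. The two-step argument above circumvents this by first using the input bound $\dot u_n(\tau^+)\geq -R$ to guarantee positivity on a definite initial layer $[\tau,\tau+\delta_1]$ whose length does \emph{not} depend on $\mu_n$, and then bootstrapping via the positive lower bound on $\int_\tau^{\tau+\delta_1}a^-$ granted by \eqref{eq:cond_int_estremi} to produce a large positive derivative, which by convexity propagates up to $T$ and breaks the a priori $L^\infty$ bound.
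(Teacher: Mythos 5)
Your argument follows essentially the same route as the paper's: get a lower bound $u\geq\varepsilon_0/2$ on an initial layer of definite length $\delta_1\sim\varepsilon_0/R$ using the tangent-line estimate from convexity, then exploit the positive mass of $a^-$ near $\tau$ granted by \eqref{eq:cond_int_estremi} to force $u$ out of the $L^\infty$ ball as $\mu\to+\infty$. The only cosmetic difference is that the paper integrates the equation \emph{twice} on $[\tau,\tau+\varepsilon/(2R)]$ to push $u$ itself past $K$, while you integrate \emph{once} to make $\dot u(\tau+\delta_1)$ large and then propagate this to $T$ by convexity; both are fine.

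There is, however, a small gap in the propagation step. You set $\delta_1=\min(\varepsilon_0/(2R),\,T-\tau)$, but your final inequality
\[
u_n(T)\geq u_n(\tau+\delta_1)+\dot u_n(\tau+\delta_1)\,(T-\tau-\delta_1)
\]
produces a contradiction only when $T-\tau-\delta_1>0$. In the degenerate case $\varepsilon_0/(2R)\geq T-\tau$ you get $\delta_1=T-\tau$, the factor $(T-\tau-\delta_1)$ vanishes, and the blow-up of $\dot u_n(\tau+\delta_1)$ no longer feeds into a bound on $u_n(T)$. The paper sidesteps this by observing at the outset that it suffices to treat $\varepsilon$ small (since the conclusion for smaller $\varepsilon$ is stronger) and explicitly assuming $\varepsilon/R<T-\tau$. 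An equally easy fix within your scheme is to take $\delta_1=\min\bigl(\varepsilon_0/(2R),\,(T-\tau)/2\bigr)$, which still keeps $\delta_1>0$ (so $\alpha_1>0$ by \eqref{eq:cond_int_estremi}) while guaranteeing $T-\tau-\delta_1\geq(T-\tau)/2>0$. With that adjustment the proof is complete and matches the paper's in spirit.
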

\begin{proof}
We will deal only with the case $\dot u (\tau^+) \geq -R$, $u(\tau) \geq 0$, the other being analogous.
Let us fix $\varepsilon >0$ and assume, by contradiction, that $u(\tau) > \eps$. A simple convexity argument implies that (provided $\varepsilon$ is small enough, so that $\varepsilon/R < T- \tau$)
\[
u(t) \geq \eps -R(t-\tau) \quad \text{for every } t \in [\tau,\tau+\varepsilon/R],
\]
and $u(t) \geq \eps/2$ for every $t \in [\tau,\bar t\,]$, with $\bar t = \tau+\varepsilon/(2R)$.
Hence, integrating twice equation \eqref{eq-}, we have
\[
\begin{split}
u(\bar t) &
\geq u(\tau) + \dot u(\tau^+)(\bar t -\tau) +\mu \frac{\eps^3}{8} \int_{\tau}^{\bar t}\int_{\tau}^t a^-(s)ds \, dt \\
& \geq \frac{\eps}{2} +\mu \frac{\eps^3}{8} \int_{\tau}^{\bar t}\int_{\tau}^t a^-(s)ds \, dt,
\end{split}
\]
which contradicts $\Vert u \Vert_{L^\infty(\tau-T,0)} \leq K$ whenever $\mu$ is sufficiently large, since $\int_{\tau}^t a^-(s)ds >0$ by \eqref{eq:cond_int_estremi}.
\end{proof}
Our next result deals with the decay of the solutions, when $\mu \to +\infty$,
on compact subintervals of $(\tau,T)$.
\begin{lemma}\label{decay}
For any $\delta \in (0,(T-\tau)/2)$ there exists $C_\delta > 0$ such that,
for any $u$ solving \eqref{eq-}, it holds that
\begin{equation}\label{paregiusto}
\vert u(t) \vert \leq C_\delta \left( \frac{\max(\vert u(\tau) \vert,\vert u(T)\vert)}{\mu}\right)^{1/3}, \quad \mbox{ for every }
t \in [\tau+\delta,T-\delta].
\end{equation}
\end{lemma}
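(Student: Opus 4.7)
My approach is to exploit the convexity of $|u|$ on $[\tau,T]$ together with a one-sided boundary-layer estimate obtained by testing the equation against the linear weights $(s-\tau)$ near $\tau$ and $(T-s)$ near $T$; the quantitative input for the bound on these weighted integrals of $a^-$ is the non-degeneracy condition \eqref{eq:cond_int_estremi}.

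Set $v:=|u|$ and $M:=\max(|u(\tau)|,|u(T)|)$. From the convexity/concavity dichotomy recalled before the lemma, together with ODE uniqueness (a positive convex $H^1_0$-solution on a subinterval would contradict the chord inequality, and analogously for a negative concave one), $u$ can change sign at most once in $(\tau,T)$; consequently $v$ is convex on $[\tau,T]$, is bounded by $M$ by \eqref{convess}, and attains its minimum at a single point $t_*$. On any subinterval where $v$ stays strictly positive, $u$ has constant sign and the classical equation $\ddot v=\mu a^-(t)v^3$ holds.

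The core step is the following layer bound: suppose $t_*\geq\tau+\delta$. Then $v$ is non-increasing on $[\tau,\tau+\delta]$, so $v(t)\geq v(\tau+\delta)=:\eps$ there (in particular $v>0$ and $u$ is of one sign on this interval) and $\dot v(\tau+\delta)\leq 0$. Multiplying the equation by $(s-\tau)$ and integrating by parts on $[\tau,\tau+\delta]$,
\[
\delta\,\dot v(\tau+\delta)+v(\tau)-v(\tau+\delta)
=\mu\int_\tau^{\tau+\delta}(s-\tau)\,a^-(s)\,v(s)^3\,ds
\geq \mu\,\eps^3\,I_\delta^-,\qquad I_\delta^-:=\int_\tau^{\tau+\delta}(s-\tau)\,a^-(s)\,ds.
\]
The left-hand side is bounded by $v(\tau)\leq M$, giving $v(\tau+\delta)\leq (M/(\mu I_\delta^-))^{1/3}$. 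The symmetric argument, with weight $(T-s)$ on $[T-\delta,T]$, yields, when $t_*\leq T-\delta$, the bound $v(T-\delta)\leq (M/(\mu I_\delta^+))^{1/3}$ where $I_\delta^+:=\int_{T-\delta}^T(T-s)\,a^-(s)\,ds$. Both constants $I_\delta^\pm$ are strictly positive: for example, were $I_\delta^-=0$ then $a^-$ would vanish a.e.\ on $(\tau,\tau+\delta]$, contradicting the leftmost inequality in \eqref{eq:cond_int_estremi}.

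To conclude, since $\delta<(T-\tau)/2$ the two conditions $t_*\geq\tau+\delta$ and $t_*\leq T-\delta$ cannot both fail, so at least one of $v(\tau+\delta)$, $v(T-\delta)$ is controlled by the layer bound. Convexity of $v$ on $[\tau+\delta,T-\delta]$ forces its maximum there to occur at an endpoint: if both endpoints are controlled the conclusion is immediate, and if only one of them is---say $t_*>T-\delta$, so only the left bound applies---then $v$ is non-increasing on $[\tau+\delta,T-\delta]\subset[\tau,t_*]$ and its maximum is attained at the controlled endpoint $\tau+\delta$. In every case,
\[
\sup_{[\tau+\delta,T-\delta]}|u|\leq C_\delta\,(M/\mu)^{1/3},\qquad C_\delta:=\max\bigl\{(I_\delta^-)^{-1/3},(I_\delta^+)^{-1/3}\bigr\}.
\]
The only point requiring some care is the at-most-one-sign-change property of $u$: this is what guarantees that on the intervals $[\tau,\tau+\delta]$ or $[T-\delta,T]$ appearing in the layer estimate, $v$ in fact equals a classical $C^2$ solution of the equation and the integration by parts is standard. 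Apart from this, the argument is a direct integration estimate with no substantial obstacle.
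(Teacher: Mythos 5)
Your argument is correct and essentially reproduces the paper's proof: both reduce, via convexity of $|u|$, to a one-sided boundary-layer estimate for $|u(\tau+\delta)|$ or $|u(T-\delta)|$, obtained from the ODE together with the non-degeneracy condition \eqref{eq:cond_int_estremi}. The only differences are cosmetic — you test against a linear weight and integrate by parts once, while the paper integrates the equation twice, and a Fubini swap shows your constants $I_\delta^\pm$ coincide with the paper's $\int_{T-\delta}^{T}\int_{T-\delta}^{t}a^-(s)\,ds\,dt$; likewise, your case distinction by the location of the minimizer of $|u|$ plays the same role as the paper's WLOG choice of which endpoint $\tau+\delta$, $T-\delta$ dominates.
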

\begin{proof}
Let us fix $\delta \in (0,(T-\tau)/2)$. By convexity arguments it holds
\[
|u(t)| \leq \max \left(|u(\tau+\delta)|,|u(T-\delta)|\right),
\qquad \text{for every } t \in [\tau+\delta,T-\delta],
\]
hence we need an estimate on $\max \left(|u(\tau+\delta)|,|u(T-\delta)|\right)$.
Let us assume that
\[
\max \left(|u(\tau+\delta)|,|u(T-\delta)|\right) = |u(T-\delta)|,
\]
the opposite case being the same. Under this hypothesis it holds
\[
u(T-\delta) \geq 0 \;(\text{resp.}\leq 0)
\quad \implies \quad
\dot u(T-\delta) \geq 0 \;(\text{resp.}\leq 0).
\]
We will detail the proof in the case $u(T-\delta) \geq 0$; first of all we remark that $u(t) \geq u(T-\delta)$ on $[T-\delta,T]$.
Integrating twice equation \eqref{eq-} we have
\[
\begin{split}
u(T)
& \geq u(T-\delta) + \dot u(T-\delta)\delta +\mu u^3(T-\delta) \int_{T-\delta}^{T} \int_{T-\delta}^{t} a^-(s)ds\,dt \\
& \geq \mu u^3(T-\delta) \int_{T-\delta}^{T} \int_{T-\delta}^{t} a^-(s)ds\,dt.
\end{split}
\]
Being the above integral strictly positive by \eqref{eq:cond_int_estremi}, we take
\(
C_\delta = \left(\int_{T-\delta}^{T} \int_{T-\delta}^{t} a^-(s)ds\,dt\right)^{-1/3}.
\)
\end{proof}

\subsection{Some local Nehari-type arguments}

In this section, we collect some results for $H^1$ functions which are ``almost'' in
$\mathcal{N}$ (recall the definition \eqref{localnehari}) in the sense
that $\left\vert\int_0^\tau \left(\dot u^2 - a^+ u^4\right)\right\vert$ and
$\vert u(0) \vert, \vert u(\tau) \vert$ are small. We fix here
\begin{equation}\label{eq:r}
r = \left( 32 \Vert a^+ \Vert_{L^\infty} \tau^3\right)^{-1/2}
\end{equation}
and this will be the value of $r$ used throughout the paper.
\medbreak
Our first lemma says that, for functions almost
in $\mathcal{N}$, the value $\int_0^\tau \dot u^2$
is either very small or quite large.
It is worth noticing that from its proof we can conclude
that $r^2 < 4c$ (compare with Remark \ref{rem:soluzionilocali}).
\begin{lemma}\label{nehari1}
For every $\varepsilon > 0$ small, there exists a constant
$\delta_\eps > 0$ and with $\delta_\eps \to 0$ for $\eps \to 0^+$
such that,
for any $u \in H^1(0,\tau)$,
$$
\left\{
\begin{array}{l}
\vspace{0.2cm}
\displaystyle{\left\vert \int_0^\tau \left(\dot u^2 - a^+ u^4\right) \right\vert\leq \varepsilon} \\
\displaystyle{\vert u(0) \vert, \vert u(\tau) \vert \leq \varepsilon}
\end{array}
\right.
\quad
\Longrightarrow
\quad
\left\{
\begin{array}{ll}
\vspace{0.2cm}
\displaystyle{\int_0^\tau \dot u^2 \leq \delta_\eps} & \; \mbox{if } \quad \displaystyle{\int_0^\tau \dot u^2 \leq r^2} \\
\displaystyle{\int_0^\tau \dot u^2 \geq 2r^2} & \; \mbox{if } \quad \displaystyle{\int_0^\tau \dot u^2 \geq r^2.}
\end{array}
\right.
$$
\end{lemma}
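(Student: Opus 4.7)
Write $A := \int_0^\tau \dot u^2$ and $B := \int_0^\tau a^+ u^4$, so the hypothesis reads $|A-B|\le\varepsilon$. The plan is to combine the two assumptions into a scalar inequality for $A$ alone and then analyze the roots of its ``$\varepsilon=0$'' limit.

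First I would turn the boundary hypothesis into a sup-bound. Since $|u(0)|\le\varepsilon$ (or $|u(\tau)|\le\varepsilon$), we have $\min_{[0,\tau]}|u|\le\varepsilon$, hence by \eqref{disfondamentale}
\[
\|u\|_{L^\infty(0,\tau)}\le \varepsilon+\sqrt{\tau}\,\sqrt{A}.
\]
Combined with the trivial bound $B\le \|a^+\|_{L^\infty}\tau\|u\|_{L^\infty}^4$ and $|A-B|\le\varepsilon$, this yields the master inequality
\begin{equation}\label{eq:master}
A\le \|a^+\|_{L^\infty}\,\tau\,\bigl(\varepsilon+\sqrt{\tau A}\bigr)^4+\varepsilon.
\end{equation}
Setting $M:=\|a^+\|_{L^\infty}\tau$, recall that by the choice \eqref{eq:r} one has $M\tau^2=1/(32r^2)$. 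At $\varepsilon=0$, \eqref{eq:master} becomes $A\le M\tau^2 A^2 = A^2/(32r^2)$, forcing either $A=0$ or $A\ge 32r^2$; for small $\varepsilon$ the same dichotomy will persist by continuity, and the two cases of the lemma correspond exactly to the two branches.

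For the ``small'' branch ($A\le r^2$), I would use the crude bound $(a+b)^4\le 8(a^4+b^4)$ in \eqref{eq:master} to get
\[
A\le 8M\varepsilon^4+8M\tau^2 A^2+\varepsilon = 8M\varepsilon^4+\frac{A^2}{4r^2}+\varepsilon.
\]
Since $A\le r^2$ implies $A^2/(4r^2)\le A/4$, this rearranges to $\tfrac{3}{4}A\le 8M\varepsilon^4+\varepsilon$, giving $A\le\delta_\eps$ with $\delta_\eps=\tfrac{4}{3}(8M\varepsilon^4+\varepsilon)\to 0$.

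For the ``large'' branch ($A\ge r^2$), I would argue by contradiction, assuming $r^2\le A<2r^2$. Then $\sqrt{\tau A}\le\sqrt{2\tau}\,r$, so from \eqref{eq:master}
\[
A\le M\bigl(\varepsilon+\sqrt{2\tau}\,r\bigr)^4+\varepsilon.
\]
At $\varepsilon=0$ the right-hand side equals $4M\tau^2 r^4=r^2/8$, well below $r^2$; choosing $\varepsilon$ small enough (uniformly in $u$, since the bound depends only on $a^+$ and $\tau$) we still get $A<r^2$, contradicting $A\ge r^2$. Hence no $A$ can lie in $[r^2,2r^2)$, proving the second implication.

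The only mildly delicate point is making the threshold on $\varepsilon$ in the ``large'' branch uniform (i.e., not dependent on $u$): this is automatic because the whole derivation reduces to the scalar polynomial inequality \eqref{eq:master}, whose coefficients depend only on $a^+$ and $\tau$. As a by-product, taking $\varepsilon=0$ and applying the dichotomy to any $u\in\mathcal{N}$ (which has $A=B$ and $u(0)=u(\tau)=0$, hence $A\neq 0$) gives $A\ge 32r^2$, whence $c\ge 8r^2$ and in particular $r^2<4c$, as claimed in Remark \ref{rem:soluzionilocali}.
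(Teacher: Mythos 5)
Your proof is correct and follows essentially the same route as the paper: bound $\|u\|_{L^\infty(0,\tau)}$ via \eqref{disfondamentale} using the smallness of $u(0)$, $u(\tau)$, feed this into $\left|\int \dot u^2 - \int a^+u^4\right|\le\varepsilon$ to obtain a scalar inequality of the form $A\le C_\varepsilon + C A^2$, and then read off the two branches. The only cosmetic difference is that the paper solves the quadratic inequality explicitly (giving $\delta^\pm_\varepsilon=(1\pm\sqrt{1-4CC_\varepsilon})/(2C)$, with $\delta^+_\varepsilon\to 2r^2$), whereas you treat the two branches by separate elementary estimates; both are valid, and your closing remark that the $\varepsilon=0$ dichotomy yields $r^2<4c$ matches the observation the authors make just before the lemma.
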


\begin{proof}
Assume that, for some $\varepsilon > 0$, the conditions on the left-hand side hold true.
Then from \eqref{eq:cond_int_estremi}, it holds
$$
\Vert u \Vert_{L^\infty(0,\tau)} \leq \varepsilon
+ \sqrt{\tau}\left( \int_{0}^{\tau} \dot u^2 \right)^{1/2};
$$
hence, recalling the elementary inequality $(A+B)^4 \leq 8(A^4+B^4)$, $A,B \geq 0$,
\[
\int_{0}^{\tau} a^+u^4 \leq
8 \|a^+\|_{L^\infty} \left[\tau \eps^4 + \tau^3 \left( \int_{0}^{\tau} \dot u^2 \right)^{2}\right].
\]
Summing up,
\[
\int_{0}^{\tau} \dot{u}^2 \leq C_\eps
+ C \left( \int_0^\tau \dot u^2 \right)^{2},
\]
where $C_\eps = \eps +
8 \|a^+\|_{L^\infty} \tau \eps^4$ and $C =8 \|a^+\|_{L^\infty} {\tau}^3$. Solving the second order inequality in $\int_{0}^{\tau} \dot u^2$ we obtain
\[
\text{either }\qquad\int_0^{\tau} \dot u^2 \leq \delta^-_\eps
\qquad \text{or} \qquad
\int_0^{\tau} \dot u^2 \geq \delta^+_\eps,
\]
where
\[
\delta^{\pm}_\eps = (1\pm \sqrt{1-4C C_\eps})/(2C).
\]
Since, for $\eps \to 0^+$, $\delta^-_\eps \to 0$ and $\delta^+_\eps \geq 1/(2C)=2r^2$, the thesis follows.
\end{proof}

Our second result concerns the presence of internal zeros for
functions almost in $\mathcal{N}$.

\begin{lemma}\label{nehari2}
Let $\zeta$ be as in \eqref{sceltazeta}. Then there exist $\bar\eps > 0$ and $\omega > 0$ such that,
for any $u \in H^1(0,\tau)$,
$$
\left\{
\begin{array}{l}
\vspace{0.2cm}
\displaystyle{\left\vert \int_0^\tau \left(\dot u^2 - a^+ u^4\right) \right\vert \leq \bar\eps} \\
\vspace{0.2cm}
\displaystyle{\vert u(0) \vert, \vert u(\tau) \vert \leq \bar\eps} \\
\displaystyle{r^2 \leq \int_0^\tau \dot u^2 \leq 2(c + c_\zeta)}
\end{array}
\right.
\quad
\Longrightarrow
\quad
\vert u(t) \vert \geq \omega \quad \mbox{ for every } \; t \in [\zeta, \tau - \zeta].
$$
\end{lemma}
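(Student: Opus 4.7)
The plan is to argue by contradiction, exploiting compactness and the strict inequality $c<c_\zeta$ established after \eqref{localneharimin}. Suppose the statement fails. Then there exist sequences $\bar\eps_n\to 0^+$, $\omega_n\to 0^+$, functions $u_n\in H^1(0,\tau)$ satisfying the three assumptions with $\bar\eps$ replaced by $\bar\eps_n$, and points $t_n\in[\zeta,\tau-\zeta]$ with $|u_n(t_n)|<\omega_n$.

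First I would establish compactness. The assumption $\int_0^\tau \dot u_n^2\le 2(c+c_\zeta)$ together with $|u_n(0)|\le\bar\eps_n$ and \eqref{disfondamentale} gives a uniform $H^1$-bound. Up to a subsequence, $u_n\rightharpoonup u$ weakly in $H^1(0,\tau)$ and strongly in $C([0,\tau])$, and $t_n\to t_0\in[\zeta,\tau-\zeta]$. The strong convergence yields $u(0)=u(\tau)=0$ and $u(t_0)=0$, while
\[
\int_0^\tau a^+ u_n^4 \to \int_0^\tau a^+ u^4
\qquad\text{and}\qquad
\int_0^\tau\dot u_n^2-\int_0^\tau a^+u_n^4\to 0,
\]
so $\int_0^\tau\dot u_n^2\to\int_0^\tau a^+u^4$. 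By weak lower semicontinuity,
\[
\int_0^\tau\dot u^2\le\int_0^\tau a^+u^4.
\]
Moreover $\int_0^\tau a^+u^4=\lim\int_0^\tau\dot u_n^2\ge r^2>0$, so $u\not\equiv 0$.

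Next I would perform the usual Nehari rescaling. Since $u\in H^1_0(0,\tau)$ with $\int_0^\tau a^+u^4>0$, there is a unique $\alpha>0$ with $\alpha u\in\mathcal{N}$, namely $\alpha^2=\int_0^\tau\dot u^2/\int_0^\tau a^+u^4\in(0,1]$. Because $(\alpha u)(t_0)=0$ with $t_0\in[\zeta,\tau-\zeta]$, we have $\alpha u\in\mathcal{Z}_\zeta$, so by definition of $c_\zeta$,
\[
\frac{\alpha^2}{4}\int_0^\tau\dot u^2=\frac{1}{4}\int_0^\tau\dot{(\alpha u)}^2\ge c_\zeta.
\]
Since $\alpha^2\le 1$, this gives $\int_0^\tau\dot u^2\ge 4c_\zeta$. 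Combining with the lsc bound $\int_0^\tau\dot u^2\le\liminf\int_0^\tau\dot u_n^2\le 2(c+c_\zeta)$ yields $4c_\zeta\le 2(c+c_\zeta)$, i.e.\ $c_\zeta\le c$, which contradicts $c<c_\zeta$.

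The main point to be careful with is verifying that the weak limit $u$ is nontrivial and that the Nehari scaling factor $\alpha$ is at most $1$; both hinge on controlling the limit of $\int_0^\tau\dot u_n^2$ via the almost-Nehari identity together with the strong convergence in $C^0$ (hence in $L^4$). Once these are in place, the contradiction is a one-line consequence of the strict gap between $c$ and $c_\zeta$ guaranteed by the choice of $\zeta$.
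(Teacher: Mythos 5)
Your proof is correct and follows essentially the same contradiction-by-compactness argument as the paper: extract a weak $H^1$ and uniform limit $u$ from a counterexample sequence, show $u$ is a nontrivial $H^1_0$ function vanishing at an interior point of $[\zeta,\tau-\zeta]$ with $\int\dot u^2\le\int a^+u^4$, rescale onto $\mathcal{N}$ to land in $\mathcal{Z}_\zeta$, and contradict $c<c_\zeta$ via $4c_\zeta\le 2(c+c_\zeta)$. The only cosmetic difference is that you pin down the full limit $\int\dot u_n^2\to\int a^+u^4$ while the paper works only with $\liminf$; both suffice.
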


\begin{proof}
Assume by contradiction that there
exists a sequence $(u_n) \subset H^1(0,\tau)$ with
\begin{equation}\label{nehari2ass}
\left\vert \int_0^\tau \left(\dot u_n^2 - a^+ u_n^4\right) \right\vert \to 0, \quad
\vert u_n(0) \vert, \vert u_n(\tau) \vert \to 0, \quad
r^2 \leq \int_0^\tau \dot u_n^2 \leq 2(c + c_\zeta)
\end{equation}
and such that
$$
u_n(t_n) \to 0, \quad \mbox{ for some } t_n \in [\zeta,\tau-\zeta].
$$
From the second and the third condition in \eqref{nehari2ass}, we infer (once more using \eqref{disfondamentale})
that $u_n$ is bounded in $H^1(0,\tau)$, so that there exists
$\bar{u} \in H^1(0,\tau)$ such that $u_n \to \bar{u}$ weakly in $H^1$ and uniformly.
Moreover, $\bar{u} \in H^1_0(0,\tau)$, $\bar{u}(t^*) = 0$
for a suitable $t^* \in [\zeta,\tau-\zeta]$,
and $\bar{u} \not\equiv 0$ (since from the first and the third condition in \eqref{nehari2ass}
we know that $\int_0^\tau a^+ u_n^4 \geq r^2/2$, for $n$ large). Finally,
$$
\int_0^\tau \dot{\bar{u}}^2 \leq \liminf_{n \to +\infty}\int_0^\tau \dot{u}_n^2 \leq \liminf_{n \to +\infty}
\int_0^\tau a^+ u_n^4 = \int_0^{\tau} a^+ \bar{u}^4.
$$
Set now
$$
\bar{\lambda} = \left( \frac{\int_0^\tau \dot{\bar{u}}^2}{\int_0^\tau a^+ \bar{u}^4}\right)^{1/2} \leq 1;
$$
then the function $\hat{u} = \bar{\lambda} \bar{u}$ lies in $\mathcal{Z}_{\zeta}$.
Hence
$$
4 c_\zeta \leq \int_0^\tau \dot{\hat{u}}^2 \leq \int_0^\tau \dot{\bar{u}}^2 \leq
\liminf_{n \to +\infty}\int_0^\tau \dot u_n^2 \leq 2 (c + c_\zeta) < 4 c_\zeta,
$$
a contradiction.
\end{proof}

\section{Some properties of $\mathcal{N}_\mu$ and proof of Proposition \ref{neharidef}}\label{secproofnehari}

In this section we establish some further fundamental properties
enjoyed by functions in $\mathcal{N}_\mu$, as defined in equation \eqref{eq:defnehari},
and, as a consequence, we give the proof of
Proposition \ref{neharidef}.

Contrarily to the properties in Lemma \ref{carattN},
the fact that the parameter $\mu$ is large
now plays a role. We point out that the final value $\mu^*$ in Theorem
\ref{thper} will be the result of many successive enlargements;
with some abuse of notation, but not to overload it,
in all the subsequent results we will always use the same symbol $\mu^*$
to denote the outcome at each step.
The crucial point, however, is that
the rule for any of these enlargements
depends ultimately only on
the local estimate given in Lemma \ref{upiccola}
(that is, on the value $\widehat\mu$ given therein). For this reason,
the final value $\mu^*$ is a quantity independent on both the sequence $\mathcal{L}$
and on the integer $N$.

\begin{lemma}\label{nehariprimeprop}
For every $\eps > 0$ there exists $\mu^* > 0$ such that, for any $\mu > \mu^*$ and
$u \in \mathcal{N}_\mu$, the following hold true, for any $i \in \{-N,\ldots,N\}$:
\begin{itemize}
\item[(i)] $\vert u(\sigma_i) \vert + \vert u(\tau_i) \vert \leq \eps$,
\item[(ii)] $| \dot u (\sigma_i^-) u (\sigma_i)| + |\dot  u (\tau_i^+) u (\tau_i)| \leq \varepsilon$,
\item[(iii)] $\left\vert \int_{I^+_i} \left(\dot u^2 - a^+ u^4\right) \right\vert \leq \eps$.
\end{itemize}
\end{lemma}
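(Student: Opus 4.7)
The three bounds will be proved in order, with (ii) and (iii) following from (i) together with uniform-in-$\mu$ bounds on the one-sided derivatives at $\sigma_i$ and $\tau_i$. For (i), I would apply Lemma \ref{upiccola} on each negativity interval $I_i^-$, where $u$ solves $\ddot u - \mu a^-(t)u^3 = 0$ by Lemma \ref{carattN}(i), and $\|u\|_{L^\infty(I_i^-)} \leq K$ by (C3). The hypotheses of Lemma \ref{upiccola} require a one-sided bound on $\dot u(\tau_i^+)$, with the appropriate sign matched to that of $u(\tau_i)$, uniform in $\mu$. Two cases arise: when $i \in L$, condition (C4) supplies exactly the $\rho$-bound needed in each sign combination; when $i \notin L$, the function $u$ is $W^{2,\infty}$ in a neighborhood of $\tau_i$ (since both $I_i^+$ and $I_i^-$ lie in $I_N \setminus \bigcup_{j \in L} I_j^+$), so $\dot u$ is continuous at $\tau_i$, and (C1) combined with Lemma \ref{ulimitata} gives $\|\dot u\|_{L^\infty(I_i^+)} \leq M'$. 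The same discussion applies at $\sigma_{i+1}^-$. Taking $R = \max(\rho, M')$ and $\mu > \widehat\mu(R,\eps')$ for some $\eps' \in (0,\eps)$ to be fixed below, Lemma \ref{upiccola} (and its analogue at the right endpoint) yields $|u(\sigma_i)| + |u(\tau_i)| \leq \eps'$.

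For (ii), I would pair (i) with uniform bounds on $|\dot u(\sigma_i^-)|$ and $|\dot u(\tau_i^+)|$. For $i \notin L$ these are again $\leq M'$. For $i \in L$, (C4) supplies only one-sided bounds, so the missing direction has to come from a convexity argument on the adjacent negativity interval. For example, suppose $u(\sigma_i) \geq 0$ and $\dot u(\sigma_i^-) = -A$ with $A > 0$; on $I_{i-1}^-$, the equation $\ddot u = \mu a^-(t)u^3$ forces $u$ to be convex wherever it is positive. Running backward from $\sigma_i$, a short analysis shows that $u$ must remain strictly positive throughout $I_{i-1}^-$ and satisfy $u(\tau_{i-1}) \geq u(\sigma_i) + A(T-\tau)$; comparison with (C3) then yields $A \leq K/(T-\tau)$. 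The other three sign combinations are treated symmetrically. Thus $|\dot u(\sigma_i^-)|,|\dot u(\tau_i^+)| \leq \widetilde R := \max(\rho, M', K/(T-\tau))$ uniformly in $\mu$, and choosing $\eps'$ in step (i) so that $2\widetilde R \eps' \leq \eps$ delivers the bound in (ii).

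Finally, (iii) reduces to (ii) via the Nehari-type identity
$$\int_{I_i^+}(\dot u^2 - a^+ u^4) = u(\tau_i)\dot u(\tau_i^+) - u(\sigma_i)\dot u(\sigma_i^-),$$
which holds in both cases: when $i \in L$ it is exactly Lemma \ref{carattN}(ii), and when $i \notin L$ it follows from multiplying the classical equation $\ddot u + a^+(t)u^3 = 0$ on $I_i^+$ by $u$ and integrating by parts, using the $C^1$-continuity of $u$ across $\sigma_i$ and $\tau_i$. The right-hand side is then controlled by (ii). The main obstacle in the whole argument is the uniform derivative bound of step two: the convexity estimate itself is elementary, but the case analysis on the sign of $u(\sigma_i)$ must be done carefully, and it is precisely here that (C3) enters to close the bound uniformly in $\mu$.
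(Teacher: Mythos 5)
Your proof is correct and follows essentially the same strategy as the paper's: for (i) invoke Lemma \ref{upiccola} after securing the one-sided derivative bound from (C4) when $i\in L$ and from Lemma \ref{ulimitata} together with $C^1$-regularity across $\sigma_i,\tau_i$ when $i\notin L$; for (ii) obtain the missing one-sided derivative bound for $i\in L$ by convexity on the adjacent negativity interval combined with (C3); for (iii) use the boundary-term identity, which is Lemma \ref{carattN}(ii) for $i\in L$ and integration by parts for $i\notin L$. The only cosmetic difference is that you run the convexity argument backward on $I_{i-1}^-$ to control $\dot u(\sigma_i^-)$, whereas the paper runs it forward on $I_i^-$ to control $\dot u(\tau_i^+)$; these are symmetric and interchangeable.
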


\begin{proof}
As for (i), when $i \in L$ assumption (C4) holds and the conclusion follows directly from Lemma \ref{upiccola},
with the choice $R = \rho$. On the other hand, when $i \notin L$
we first apply Lemma \ref{ulimitata} with $M = r^2$ to ensure that
\begin{equation}\label{bounddotu}
\Vert \dot u \Vert_{L^{\infty}({I^+_i})} \leq M'
\end{equation}
for a suitable $M' > 0$. Second, we notice that $\dot u(\sigma_i^-) = \dot u(\sigma_i^+)$
and $\dot u(\tau_i^+) = \dot u(\tau_i^-)$ since,
by Lemma \ref{carattN} (i), $u$ solves the differential equation
both at $\sigma_i$ and $\tau_i$.
Hence, the conclusion follows again from Lemma \ref{upiccola}, with
the choice $R = M'$.

We now deal with (ii); again, we have to distinguish the cases $i \notin L$ and $i \in L$.
In the former, the conclusion is immediate, recalling
\eqref{bounddotu} (together with $\dot u(\sigma_i^-) = \dot u(\sigma_i^+)$
and $\dot u(\tau_i^+) = \dot u(\tau_i^-)$) and the previous point (i).
In the latter, consider for instance the case $u(\tau_i) > 0$ (the other being analogous). Since $u(\tau_i)$ is small by the previous step, and $\dot u(\tau_i^+) > -\rho$, we just have to prove that $\dot u(\tau_i^+)$ is bounded from above. Since $u$ is convex on ${I^-_i}$ and $u$ satisfies (C3), we have
\[
\dot u (\tau_i^+) \leq \frac{u(\sigma_{i+1})-u(\tau_i)}{\sigma_{i+1}-\tau_i}
\leq \frac{K}{T-\tau}
\]
and the conclusion follows.

Finally, we notice that for every index $i$ it holds
\[
\int_{I^+_i} \left(\dot u^2 - a^+ u^4\right) = u(\tau_i)\dot u(\tau_i^+) - u(\sigma_i)\dot u(\sigma_i^-).
\]
This equality comes from Lemma \ref{carattN} (iii)
if $i \in L$, while it is a consequence of the integration by parts rule when
$i \notin L$. Then, (iii) follows from (ii).
\end{proof}

\begin{lemma}\label{nehariprimeprop2}
For every $\eps > 0$ small enough there exists $\mu^* > 0$ such that, for any $\mu > \mu^*$ and
$u \in \mathcal{N}_\mu$, the following hold true:
\begin{itemize}
\item[(i)] $\Vert u \Vert_{L^\infty({I^-_i})} + \int_{I^-_i} \dot u^2
\leq \eps$, for any $i \in \{-N,\ldots,N\}$,
\item[(ii)] $\Vert u \Vert_{L^\infty({I^+_i})} + \Vert \dot u \Vert_{L^\infty({I^+_i})} +
\Vert \ddot u \Vert_{L^\infty({I^+_i})}  \leq \eps$, for any $i \notin L$,
\item[(iii)] $\int_{I^+_i} \dot u^2 \geq 2r^2$, for any $i \in L$,
\item[(iv)] for a suitable $\omega > 0$,
$$
u(t) \geq \omega, \quad \mbox{ for every } t \in \bigcup_{i \in L}[\sigma_i + \zeta, \tau_i - \zeta].
$$
\end{itemize}
\end{lemma}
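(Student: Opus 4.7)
The plan is to deduce all four assertions from the boundary and energy-balance estimates of Lemma \ref{nehariprimeprop}, combined with the local Nehari-type dichotomies of Lemmas \ref{nehari1}--\ref{nehari2}. The idea is to fix the target $\eps>0$, then pick an auxiliary threshold $\tilde\eps=\tilde\eps(\eps)$ small enough that Lemma \ref{nehari1} forces $\delta_{\tilde\eps}$ below whatever smallness we need, and $\tilde\eps<\bar\eps$ so that Lemma \ref{nehari2} applies; finally, we choose $\mu^*$ from Lemma \ref{nehariprimeprop} so that for every $\mu>\mu^*$ and $u\in\mathcal{N}_\mu$ the boundary values $|u(\sigma_i)|+|u(\tau_i)|$, the cross-terms $|u(\sigma_i)\dot u(\sigma_i^-)|+|u(\tau_i)\dot u(\tau_i^+)|$, and the balance $|\int_{I^+_i}(\dot u^2-a^+u^4)|$ are all $\leq\tilde\eps$. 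Since this $\mu^*$ depends only on the weight $a$, on $k$ and on $\tilde\eps$, it is independent of $\mathcal{L}$ and $N$.

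For (i), on $I^-_i$ the equation \eqref{eq-} gives $u\ddot u=\mu a^- u^4\ge 0$, so integration by parts yields $\int_{I^-_i}\dot u^2+\mu\int_{I^-_i}a^- u^4 = u(\sigma_{i+1})\dot u(\sigma_{i+1}^-)-u(\tau_i)\dot u(\tau_i^+)$. Both terms on the left are non-negative, and the right-hand side is $\leq 2\tilde\eps$ by Lemma \ref{nehariprimeprop} (ii), so $\int_{I^-_i}\dot u^2\leq 2\tilde\eps$. The $L^\infty$ bound follows because, on $I^-_i$, $u$ is convex where non-negative and concave where non-positive, so by \eqref{convess} $\|u\|_{L^\infty(I^-_i)}\leq\max(|u(\tau_i)|,|u(\sigma_{i+1})|)\leq\tilde\eps$ from Lemma \ref{nehariprimeprop} (i).

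For (ii) and (iii), I apply Lemma \ref{nehari1} on $I^+_i$: the two smallness hypotheses on the balance and on the boundary values are precisely Lemma \ref{nehariprimeprop} (iii) and (i). If $i\notin L$, condition (C1) gives $\int_{I^+_i}\dot u^2<r^2$, so the first alternative of Lemma \ref{nehari1} forces $\int_{I^+_i}\dot u^2\leq\delta_{\tilde\eps}\to 0$. Combining this with the boundary smallness through \eqref{disfondamentale} bounds $\|u\|_{L^\infty(I^+_i)}$; then $\|\ddot u\|_{L^\infty(I^+_i)}\leq\|a^+\|_{L^\infty}\|u\|_{L^\infty(I^+_i)}^3$ since $u$ solves \eqref{eq+} on $I^+_i$ by Lemma \ref{carattN} (i); and finally \eqref{diselementare} bounds $\|\dot u\|_{L^\infty(I^+_i)}$, all by quantities tending to $0$ with $\tilde\eps$. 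If instead $i\in L$, condition (C1) gives $\int_{I^+_i}\dot u^2>r^2$, and the second alternative of Lemma \ref{nehari1} directly yields (iii).

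For (iv), I invoke Lemma \ref{nehari2} on each $I^+_i$ with $i\in L$: the upper bound $\int_{I^+_i}\dot u^2\leq 2(c+c_\zeta)$ in its hypothesis is the other half of (C1), the lower bound $\int_{I^+_i}\dot u^2\geq r^2$ was just established in (iii), and the boundary/balance smallness assumptions are Lemma \ref{nehariprimeprop} (i) and (iii) for any $\tilde\eps\leq\bar\eps$. Lemma \ref{nehari2} then delivers $|u(t)|\geq\omega$ throughout $[\sigma_i+\zeta,\tau_i-\zeta]$, and condition (C2) forces the sign to be positive. The only genuine subtlety is a bookkeeping one: we must select a single $\tilde\eps$ that simultaneously meets the smallness requirements of Lemmas \ref{nehari1} and \ref{nehari2} and produces $\leq\eps$ on the right-hand sides of (i)--(iv); since each constraint is of the form $\tilde\eps\leq h(\eps)$ for an explicit $h$ coming from \eqref{disfondamentale}, \eqref{diselementare} and the structure of those lemmas, this is a straightforward, if lengthy, adjustment.
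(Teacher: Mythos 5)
Your proof is correct and follows essentially the same route as the paper: integration by parts on $I^-_i$ together with Lemma~\ref{nehariprimeprop}~(i)--(ii) and \eqref{convess} for (i), the dichotomy of Lemma~\ref{nehari1} combined with \eqref{disfondamentale}--\eqref{diselementare} for (ii)--(iii), and Lemma~\ref{nehari2} (with (C2) fixing the sign) for (iv). Your explicit tracking of the auxiliary tolerance $\tilde\eps$ and the remark that (C2) upgrades $|u|\geq\omega$ to $u\geq\omega$ are small but welcome clarifications of steps the paper leaves implicit.
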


\begin{proof}
Integrating by parts equation \eqref{eq-} solved by $u$ on $I^-_i$, we have
\[
\begin{split}
\int_{I^-_i} \dot u^2 & = -\mu \int_{I^-_i} a^- u^4 + u(\sigma_{i+1})\dot u(\sigma_{i+1}^-) -
u(\tau_{i})\dot u(\tau_{i}^+) \\
& \leq u(\sigma_{i+1})\dot u(\sigma_{i+1}^-) -
u(\tau_{i})\dot u(\tau_{i}^+).
\end{split}
\]
Recalling \eqref{convess}, the conclusion (i) follows from the points (i) and (ii) of Lemma \ref{nehariprimeprop}.

We now deal with (ii).
The elementary inequality \eqref{disfondamentale}
together with Lemma \ref{nehari1} and the point (i) of Lemma \ref{nehariprimeprop}
imply that $\Vert u \Vert_{L^{\infty}({I^+_i})}$ can be made arbitrarily small for $\mu$ large.
Since $u$ solves an equation independent of $\mu$ on such an interval, we deduce the bound for
$\Vert \ddot u \Vert_{L^{\infty}({I^+_i})}$.
Using \eqref{diselementare}, we conclude that the same is true for
$\Vert \dot u \Vert_{L^{\infty}({I^+_i})}$, as well.

Finally, (iii) and (iv) follow from Lemmas \ref{nehari1} and \ref{nehari2}, respectively.
\end{proof}

We are now in position to prove Proposition \ref{neharidef}. As we mentioned, such a proof relies
on the main result of \cite{NorVer13}, which we report here for the reader's convenience.
\begin{theorem}[{\cite[Theorems 1.2, 2.8]{NorVer13}}]\label{teo:nv}
Let $X$ be a Hilbert space, $J\in C^2(X,\RR)$, $V^+\subset X$ a fixed closed
linear subspace. We define
\[
V^+_x\equiv V^+,\qquad V^-_x=\mathrm{span}\left\{\xi_1(x),\dots,\xi_h(x)\right\},\qquad V_x = V^+_x \oplus V^-_x,
\]
with $\xi_i \in C^1(\mathcal{A},X)$ for every $i=1,\dots,h$, $\mathcal{A}\subset X$ open, in such a way that
$V_x$ is a proper subspace
for every $x$. Let
\[
\mathcal{M} =\left\{x\in \mathcal{A}:\,\mathrm{proj}_{V_x}\nabla J(x)=0 \right\}.
\]
Let us suppose that for some $0<\delta'<\delta''$ it holds, for every $x\in\mathcal{M}$,
\begin{enumerate}
 \item[(i)] $\|\xi_i(x)\|_X\geq\delta'$, $\langle \xi_i(x),\xi_j(x)\rangle_X=0$,
 for every $i\neq j$;
 \item[(ii)] $\xi'_i(x)[v]\in V_x$ for every $i$ and $v\in V_x$;
 \item[(iii)] $\pm J''(x)[v,v] \geq \delta' \|v\|_X^2$ for every $v\in V^\pm_x$;
 \item[(iv)] $\|\xi'_i(x)[u]\|_X\leq \delta''\|u\|_X$, $|J'(x)[u]|\leq \delta''\|u\|_X$ and
 $|J''(x)[u,w]| \leq \delta'' \|u\|_X\|w\|_X$ for every  $u,w\in X$.
\end{enumerate}
Then the set $\mathcal{M}$ is a $C^1$ embedded submanifold of $X$ such that any constrained
Palais-Smale sequence for $J$ is a free one.
\end{theorem}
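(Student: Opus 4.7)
The strategy is to realize $\mathcal{M}$ as the zero set of an auxiliary $C^1$ map into a fixed Hilbert target, and then show that at every $x\in\mathcal{M}$ the restriction of its differential to the (variable) subspace $V_x$ is a uniform isomorphism; both the manifold statement and the Palais--Smale statement will follow from this single property.

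I would start by defining $\Psi:\mathcal{A}\to V^+\times\RR^h$ via
\[
\Psi(x)=\Bigl(P_{V^+}\nabla J(x),\ \bigl(\langle\nabla J(x),\xi_i(x)\rangle\bigr)_{i=1}^{h}\Bigr),
\]
where $P_{V^+}$ denotes the orthogonal projection onto the fixed closed subspace $V^+$. Since $\mathrm{proj}_{V_x}\nabla J(x)=0$ is equivalent to $\nabla J(x)\perp V^+$ and $\nabla J(x)\perp\xi_i(x)$ for every $i$, we have $\mathcal{M}=\Psi^{-1}(0)$, and $\Psi$ is $C^1$ because $J\in C^2$ and the $\xi_i$ are $C^1$. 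The heart of the proof is the claim that $T_x:=D\Psi(x)|_{V_x}:V_x\to V^+\times\RR^h$ is a linear isomorphism, with the norm of $T_x^{-1}$ bounded in terms of $\delta'$ and $\delta''$ only.

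To establish this, I would compute $D\Psi(x)[v]$ for $v\in V_x$ at a point $x\in\mathcal{M}$: the term $\langle\nabla J(x),\xi_i'(x)[v]\rangle$ appearing in the $i$-th component vanishes because (ii) places $\xi_i'(x)[v]\in V_x$ while $x\in\mathcal{M}$ forces $\nabla J(x)\perp V_x$. Only the Hessian contribution remains, namely $T_xv=(P_{V^+}(J''(x)v),\,(J''(x)[v,\xi_i(x)])_i)$. Writing $v=v^++v^-$ with $v^-=\sum_i c_i\xi_i(x)$ and pairing $T_xv$ with the sign-flipped element $(v^+,-c)\in V^+\times\RR^h$, the cross terms cancel by symmetry of $J''$ and one obtains
\[
\bigl\langle T_xv,\,(v^+,-c)\bigr\rangle=J''(x)[v^+,v^+]-J''(x)[v^-,v^-]\geq\delta'\bigl(\|v^+\|_X^2+\|v^-\|_X^2\bigr)
\]
by (iii). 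The orthogonality and lower bound in (i) convert $|c|$ into $\|v^-\|_X/\delta'$, so Cauchy--Schwarz yields the coercivity $\|T_xv\|\geq c(\delta')\sqrt{\|v^+\|_X^2+\|v^-\|_X^2}\geq c(\delta')\|v\|_X/\sqrt{2}$. After identifying $V_x\simeq V^+\oplus\RR^h$ via $v\mapsto(v^+,c)$, a direct calculation shows that the resulting operator is self-adjoint (since $\langle T_xv,w\rangle=J''(x)[v,w]$ is symmetric in $v,w$); a self-adjoint coercive operator on a Hilbert space is invertible, with inverse controlled by the coercivity constant.

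From the isomorphism $T_x$ everything follows. Surjectivity of $D\Psi(x)$ is immediate, and the topological splitting $X=\ker D\Psi(x)\oplus V_x$ (any $y\in X$ decomposes as $y=(y-T_x^{-1}D\Psi(x)[y])+T_x^{-1}D\Psi(x)[y]$) is continuous by (iv); the implicit function theorem then makes $\mathcal{M}$ a $C^1$ embedded submanifold with $T_x\mathcal{M}=\ker D\Psi(x)$. For the Palais--Smale statement, I would use the Lagrange-multiplier decomposition $\nabla J(u_n)=\nabla_{\mathcal{M}}J(u_n)+D\Psi(u_n)^*[\lambda_n]$, pair both sides with an arbitrary $v\in V_{u_n}$, and use $\nabla J(u_n)\perp V_{u_n}$ (since $u_n\in\mathcal{M}$) to get $\langle\lambda_n,T_{u_n}v\rangle=-\langle\nabla_{\mathcal{M}}J(u_n),v\rangle$. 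The uniform invertibility of $T_{u_n}$ then forces $|\lambda_n|\leq C\|\nabla_{\mathcal{M}}J(u_n)\|\to 0$, and the uniform bound on $D\Psi(u_n)^*$ from (iv) gives $\nabla J(u_n)\to 0$.

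The most delicate point is the coercivity estimate on $T_x$ with constants independent of $x$: since $V^+$ and $V^-_x$ need not be mutually orthogonal in $X$, one must be careful that the change of coordinates $v\leftrightarrow(v^+,c)$ is bounded uniformly, and must exploit simultaneously all four hypotheses---(i) to quantify the basis $\xi_i$ and make the coefficient-to-vector map uniformly bi-Lipschitz, (ii) to annihilate the $\xi_i'$-terms on $\mathcal{M}$, (iii) to produce the sign-definite bilinear estimate, and (iv) to obtain uniform continuity of all operators appearing in the implicit function theorem and the Lagrange-multiplier estimate.
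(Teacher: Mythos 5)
The paper itself gives no proof of Theorem \ref{teo:nv}: it is imported verbatim from \cite{NorVer13}, so there is nothing internal to compare against. Your route --- realizing $\mathcal{M}$ as the zero set of $\Psi(x)=\bigl(P_{V^+}\nabla J(x),(\langle\nabla J(x),\xi_i(x)\rangle)_i\bigr)$, computing $D\Psi(x)|_{V_x}=T_x$ on $\mathcal{M}$ (hypothesis (ii) kills the $\xi_i'$-terms), getting a lower bound $\|T_xv\|\geq c(\delta')\|v\|_X$ by pairing with the sign-flipped element $(v^+,-c)$ and using (iii) on the two blocks and (i) for the coefficient map, then deducing the submanifold structure from the split-submersion theorem and the Palais--Smale transfer from a Lagrange-multiplier estimate with $\|\lambda_n\|\leq C(\delta')\|\nabla_{\mathcal{M}}J(u_n)\|$ --- is the natural one, and these steps are correct (the operator you call ``coercive'' is really only norm-bounded below, but combined with the self-adjointness you observe this does give invertibility, with $\|T_x^{-1}\|$ depending only on $\delta'$).

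There is, however, one step that does not follow from the hypotheses as quoted: the closing claim that ``the uniform bound on $D\Psi(u_n)^*$ from (iv)'' finishes the argument. Differentiating the $i$-th scalar component of $\Psi$ produces the term $\langle\nabla^2J(u_n)y,\xi_i(u_n)\rangle$, which (iv) controls only by $\delta''\|y\|_X\,\|\xi_i(u_n)\|_X$; hypothesis (i) bounds $\|\xi_i(x)\|_X$ from \emph{below} by $\delta'$, and no upper bound is assumed (nor is one obvious in the application of the paper, where $\xi_i(u)=\eta_i u$). If $\|\xi_i(u_n)\|_X\to\infty$ along the sequence, the estimate $\|D\Psi(u_n)^*[\lambda_n]\|\leq\|D\Psi(u_n)\|\,\|\lambda_n\|\to0$ breaks down even though $\|\lambda_n\|\to0$; the same issue reappears if one instead tries to bound the projection of $X$ onto $V_{u_n}$ along $\ker D\Psi(u_n)$. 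The gap is easily repaired: replace $\xi_i$ by $\xi_i/\|\xi_i\|_X$, i.e.\ use $\langle\nabla J(x),\xi_i(x)/\|\xi_i(x)\|_X\rangle$ as the $i$-th component of $\Psi$. This changes neither $V^-_x$ nor $\mathcal{M}$; (i) and (iii) persist, (ii) persists because the derivative of the normalized section is a combination of $\xi_i'(x)[v]$ and $\xi_i(x)$, and (iv) yields $\|(\xi_i/\|\xi_i\|)'(x)[u]\|_X\leq 2\delta''\|u\|_X/\delta'$. Moreover, at points of $\mathcal{M}$ the extra term in that derivative is multiplied by $\langle\nabla J(x),\xi_i(x)\rangle=0$, so your formula for $T_x$ only acquires the harmless factors $1/\|\xi_i(x)\|_X$ in the scalar components and the coercivity estimate goes through unchanged (indeed more cleanly, since the basis is now orthonormal). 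With this normalization all operator norms are controlled by $\delta'$ and $\delta''$ alone and your argument is complete.
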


\begin{proof}[Proof of Proposition \ref{neharidef}]
We argue in two steps. First of all,
we consider the open subset of $H^1_{\mathrm{per}}(I_N)$ defined as
$$
\mathcal{A} = \left\{ u \in H^1_{\mathrm{per}}(I_N) \; : \;
\begin{array}{l}
\int_{I^+_i} u^2 < \delta^2, \; \mbox{ for } \, i \notin L
\\
\int_{I^+_i} \dot u^2 > r^2, \; \mbox{ for } \, i \in L
\\
3\int_{I_N} \dot\eta_i^2 u^2 < r^2, \; \mbox{ for } \, i \in L
\end{array} \right\},
$$
where
$$
\delta = \min\left\{ r, \frac12 \left[ 3\,T\, (k+1) \|a^+\|_{L^\infty} \right]^{-1/2} \right\},
$$
and, recalling definition \eqref{nehari}, we set
$$
\mathcal{M}_\mu = \mathcal{A} \cap \mathcal{P}_\mu.
$$
Then, we have the following.
\smallbreak
\noindent
\emph{\underline{Step 1}. For any $\mu > 0$, the set $\mathcal{M}_\mu$ is a $C^1$ embedded submanifold of $H^1_{\mathrm{per}}(I_N)$
such that any constrained Palais-Smale sequence is a free one.}
\smallbreak
\noindent
Of course, this comes from a direct application of Theorem \ref{teo:nv}, with $X=H^1_{\mathrm{per}}(I_N)$,
$J=J_\mu$, $\mathcal{M} = \mathcal{M}_\mu$, and $\xi_i(u) = \eta_i u$, $i\in L$ (recall \eqref{nehari}).
From this point of view, assumptions (i), (ii) and (iv) are almost straightforward, and we refer to the proof of
\cite[Theorem 3.5]{NorVer13} for full details. Here we focus on the proof of (iii).

Let $u \in \mathcal{M}_\mu$ and $v \in V^+$; in particular, $v$ vanishes on $\cup _{i \in L}I^+_i$. We have
\[
J_{\mu}''(u)[v,v] = \int_{I_N} \dot v^2 - 3\int_{I_N} a_{\mu}u^ 2v^2 \geq \int_{I_N} \dot v^2 - 3\int_{I_N} a^+u^ 2v^2.
\]
We now indicate with $\Delta_l$ the connected components of $I_N \setminus \cup _{i \in L}I^+_i$ (the index $l$ varies between 1 and the cardinality of $L$, $|L|$, or between 1 and $|L|+1$).
Recalling assumption \eqref{ipotesil}, the following estimate on the length of each $\Delta_l$ holds
\[
|\Delta_l| \leq k T + (T-\tau) \leq (k+1)T.
\]
Hence, since $v$ vanishes at the end-points of each $\Delta_l$, once more using \eqref{disfondamentale}, for every $k$ it holds
\[
\| v\|^2_{L^{\infty}(\Delta_l)} \leq |\Delta_l| \int_{\Delta_l} \dot v^2
\leq (k + 1)T \int_{\Delta_l} \dot v^2,
\]
and hence
\[
\begin{split}
\int_{I_N} a^+u^2v^2 &\leq \sum_l \int_{\Delta_l} a^+u^2v^2\leq \sum_l (k + 1)T \int_{\Delta_l} \dot v^2 \int_{\Delta_l} a^+ u^2 \\
 & \leq (k + 1)T \cdot \|a^+\|_{L^\infty} \cdot k\delta^2 \sum_l \int_{\Delta_l} \dot v^2\\
 & = k(k + 1)T  \|a^+\|_{L^\infty}\, \delta^2 \int_{I_N} \dot v^2,
\end{split}
\]
indeed at most $k$ intervals $I^+_i$ with $i \notin L$ stay in each $\Delta_l$. Continuing the
previous estimates on the second derivative of $J_\mu$ we obtain
\[
\begin{split}
J_{\mu}''(u)[v,v] &\geq
 \left( 1-3k(k+1)T\|a^+\|_{L^\infty}\,\delta^2 \right)
\int_{I_N} \dot v^2\\
& \geq
\left(1+(k+1)^2T^2\right)^{-1}\left( 1-3\,(k+1)T\|a^+\|_{L^\infty}\,k\,\delta^2 \right) \|v\|^2_{H^1(I_N)}\\
&=\delta'\|v\|^2_{H^1(I_N)}
\end{split}
\]
by the Poincaré inequality \eqref{poincare}, where $\delta'>0$ by the definition of $\delta$.

On the other hand, let $v \in V^-_u$.
Then $v = \sum_{i \in L}\alpha_i \eta_i u$ for some $\alpha_i \in \mathbb{R}$ and, by the assumptions on $\eta_i$,
$\dot v^2 = \sum_{i \in L}\alpha_i^2  \dot{(\eta_i u)}^2$.
Using Lemma \ref{carattN} {(iv)}, we obtain
\begin{align*}
J_{\mu}''(u)[v,v] = & \int_{I_N} \dot v^2 - 3\int_{I_N} a_{\mu}(t)u^ 2v^2 = \sum_{i \in L} \alpha_i^2 \left( \int_{I_N}\dot{(\eta_i u)}^2 - 3\int_{I_N} a_\mu\eta_i^2 u^4  \right) \\
= & \sum_{i \in L} \alpha_i^2 \left( -2\int_{I_N} \dot{(\eta_i u)}^2 + 3 \int_{I_N} \dot\eta_i^2 u^2 \right) \\
= & -\int_{I_N} \dot v^2 + \sum_{i \in L}\alpha_i^2\left( 3\int_{I_N}\dot{\eta_i}^2 u^2 - \int_{I_N}\dot{(\eta_i u)}^2\right).
\end{align*}
Since, for every $i \in L$, $u\in\mathcal{M}_\mu$,
\[
3\int_{I_N}\dot{\eta_i}^2 u^2
< r^2
< \int_{I_i^+}\dot u^2
= \int_{I_i^+}\dot{(\eta_i u)}^2
< \int_{I_N}\dot{(\eta_i u)}^2,
\]
the conclusion follows again by Poincar\'e inequality.
\smallbreak
\noindent
\emph{\underline{Step 2}. There exists $\mu^* > 0$ such that, for $\mu > \mu^*$, $\mathcal{N}_\mu \subset \mathcal{M}_\mu$ open.}
\smallbreak
\noindent
According to the definition of $\mathcal{A}$, we have to verify that, for $\mu$ large,
$$
\int_{I^+_i}  u^2 < \delta^2, \quad \mbox{ for } \; i \notin L,
\quad \mbox{ and } \quad
3\int_{I_N} \dot\eta_i^2 u^2 < r^2, \quad \mbox{ for } \; i \in L;
$$
the first inequality follows directly from the smallness of $\Vert u \Vert_{L^\infty(I^+_i)}$
proved in Lemma \ref{nehariprimeprop2} {(ii)}. In the second one the integral is indeed on the
support of $\dot \eta_i$ (which is contained in $I^-_{i-1}\cup I^-_i$), hence the thesis follows from
the smallness of $\Vert u \Vert_{L^\infty(I^-_i)}$ proved in Lemma \ref{nehariprimeprop2} {(i)}.
Notice that the value $\mu^*$ obtained in this way depends (on the function $a$ and) on $k$,
via $\delta$.
\end{proof}

\section{Construction of a local variation}\label{secvariazione}

In this section, we collect some results which will be used in the next Section
\ref{secexistence} to construct local variations
(in $\mathcal{N}_\mu$) for functions
in $\partial\mathcal{N}_\mu$.

\subsection{Variations on $[0,\tau]$}\label{subs:+}

Let $r,\zeta$ be fixed as in \eqref{eq:r} and \eqref{sceltazeta}. Take $p,q: [0,\tau] \to \mathbb{R}$ as the affine functions such that
\begin{equation}\label{defpq}
q(\tau) = p(0) = 0, \qquad q(0) = p(\tau) = 1,
\end{equation}
and define $\rho > 0$ as
\begin{equation}\label{sceltarho}
\rho = 16\sup\left\{ \frac{2K}{T-\tau} + \left\vert\int_0^\tau \left( \dot u \dot p - a^+ u^3 p\right) \right\vert +
\left\vert\int_0^\tau \left( \dot u \dot q - a^+ u^3 q\right)\right\vert \; : \;
\begin{array}{l}
u \in H^1(0,\tau)\smallskip\\
\vert u(0) \vert, \vert u(\tau) \vert \leq 1\smallskip\\
\int_0^\tau \dot u^2 \leq 2(c+c_\zeta)
\end{array}
\right\}.
\end{equation}
\begin{remark}\label{rem:K_soddisfa_rho}
Choosing as a test function in the supremum above any element of $\mathcal{K}_0$ (recall
definitions \eqref{eq:intro_nehari}, \eqref{eq:intro_nehari_tante}),
an integration by parts yields $|\dot u(0^+)|+|\dot u(\tau^-)|<\rho$ for every $u\in
\mathcal{K}_0$, and thus
\[
|\dot u(t^\pm)|\leq\rho\qquad\text{ for every }u\in\mathcal{K}_\mathcal{L},\;t\in\RR.
\]
\end{remark}
The next result is quite technical but it
greatly simplifies the exposition of
some arguments in Section \ref{secexistence}.

\begin{lemma}\label{localvar}
There exists $\eps > 0$ such that, for any
$\alpha_0', \beta_0' \in [-\rho,\rho]$ and
$u \in H^1(0,\tau)$ such that
\begin{equation}\label{ipotesilem1}
\left\{
\begin{array}{l}
\vspace{0.2cm}
\displaystyle{r^2 \leq \int_0^\tau \dot u^2 \leq 2(c + c_\zeta)} \\
\vspace{0.2cm}
\displaystyle{\vert u(0) \vert, \vert u(\tau) \vert \leq \varepsilon} \\
\displaystyle{\int_0^{\tau} \left( \dot u^2 - a^+ u^4\right)
= u(\tau) \beta_0' - u(0) \alpha_0'},
\end{array}
\right.
\end{equation}
there is a $C^1$-map defined on a neighborhood $O$ of $P_0=
(u(0),u(\tau),\alpha_0',\beta_0')$
$$
U = U(\alpha,\beta,\alpha',\beta'): O \to H^1(0,\tau),
$$
such that:
\begin{itemize}
\item[(i)] $U(P_0)(t) = u(t)$ for any $t \in [0,\tau]$ and, for every $(\alpha,\beta,\alpha',\beta') \in O$,
$$
U(\alpha,\beta,\alpha',\beta')(0) = \alpha, \qquad U(\alpha,\beta,\alpha',\beta')(\tau) = \beta,
$$
\item[(ii)] for every $(\alpha,\beta,\alpha',\beta') \in O$,
$$
\int_0^{\tau} \left( \dot U^2(\alpha,\beta,\alpha',\beta') - a^+ U^4(\alpha,\beta,\alpha',\beta')\right) = \beta \beta' - \alpha \alpha',
$$
\item[(iii)] for a suitable $C > 0$, depending only on $r,\zeta,\rho,\eps$, it holds
\begin{equation}\label{formula0}
\left\Vert \nabla_{(\alpha,\beta,\alpha',\beta')} U(P_0) \right\Vert_{H^1(0,\tau)} \leq C.
\end{equation}
%and
%\begin{equation}\label{formula0b}
%\left\Vert \frac{\partial}{\partial\alpha'} U(\alpha,\beta,\alpha',\beta'))_{|P_0} \right\Vert_{H^1(0,\tau)},
%\left\Vert \frac{\partial}{\partial\beta'} U(\alpha,\beta,\alpha',\beta'))_{|P_0} \right\Vert_{H^1(0,\tau)} \leq C
%\end{equation}
\end{itemize}
Moreover, there exists $C_\eps > 0$, depending only on $r,\zeta,\rho,\eps$ and
such that $C_\eps \to 0$ for $\eps \to 0^+$,
such that
\begin{equation}\label{formula1}
\left\vert \partial_\alpha J_{[0,\tau]}(U(P_0))\right\vert \leq
\frac{\rho}{8},
\qquad
\left\vert \partial_\beta J_{[0,\tau]}(U(P_0))\right\vert \leq
\frac{\rho}{8},
\end{equation}
and
\begin{equation}\label{formula3}
\left\vert \partial_{\alpha'} J_{[0,\tau]}(U(P_0))\right\vert \leq C_\eps \vert u(0) \vert,
\qquad
\left\vert \partial_{\beta'} J_{[0,\tau]}(U(P_0))\right\vert \leq C_\eps \vert u(\tau) \vert
\end{equation}
(where $J_{[0,\tau]}(u)=\int_0^\tau\frac12 \dot u^2-\frac14 a^+ u^4$).
\end{lemma}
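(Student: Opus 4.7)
My plan is to look for $U$ inside a one-parameter family that rescales the $H^1_0$-part of $u$, while adjusting the boundary values through the affine functions $p, q$. Setting $u_1 := u - u(0)q - u(\tau)p \in H^1_0(0,\tau)$, I define candidate maps
\[
U_\lambda(\alpha,\beta,\alpha',\beta')(t) := \lambda\, u_1(t) + \alpha\, q(t) + \beta\, p(t),
\]
so that $U_1(u(0),u(\tau),\alpha_0',\beta_0') = u$ and (i) is automatic for every $\lambda$. Condition (ii) then becomes a scalar equation in $\lambda$, namely $F(\lambda,\alpha,\beta,\alpha',\beta') = 0$, where
\[
F := \int_0^\tau \left(\dot U_\lambda^2 - a^+ U_\lambda^4\right) - (\beta\beta' - \alpha\alpha'),
\]
and by \eqref{ipotesilem1} this $F$ vanishes at $(1,u(0),u(\tau),\alpha_0',\beta_0')$. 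I then apply the implicit function theorem to solve $F = 0$ for $\lambda$ near this point and set $U := U_{\lambda(\alpha,\beta,\alpha',\beta')}$.

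The transversality check is the main point. A direct computation using $u_1 = u - u(0)q - u(\tau)p$, the integral identity in \eqref{ipotesilem1}, and the bounds $|\int_0^\tau(\dot u\dot q - a^+ u^3 q)|, |\int_0^\tau(\dot u\dot p - a^+ u^3 p)| \leq \rho/16$ coming from \eqref{sceltarho}, will show that $\partial_\lambda F|_{P_0}$ equals $-2\int_0^\tau a^+ u^4$ plus an error that vanishes as $\eps \to 0^+$. Since the integral constraint yields $\int_0^\tau a^+ u^4 = \int_0^\tau \dot u^2 - u(\tau)\beta_0' + u(0)\alpha_0' \geq r^2 - 2\eps\rho$, for $\eps$ small one has $\partial_\lambda F|_{P_0} \leq -r^2/2$, and the implicit function theorem provides $\lambda$ of class $C^1$ with $\nabla\lambda|_{P_0}$ bounded in terms only of $r,\zeta,\rho,\eps$. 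Since $\|u_1\|_{H^1}$ is uniformly bounded under \eqref{ipotesilem1} (via \eqref{disfondamentale}), the bound \eqref{formula0} in (iii) follows from the explicit linearity of $U_\lambda$ in $\alpha,\beta$.

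The remaining estimates \eqref{formula1}, \eqref{formula3} I would obtain via the chain rule: for $\gamma \in \{\alpha,\beta,\alpha',\beta'\}$,
\[
\partial_\gamma J_{[0,\tau]}(U(P_0)) = \int_0^\tau \left(\dot u\,\partial_\gamma \dot U|_{P_0} - a^+ u^3\, \partial_\gamma U|_{P_0}\right).
\]
For $\gamma = \alpha$, the splitting $\partial_\alpha U|_{P_0} = q + (\partial_\alpha \lambda)|_{P_0}\, u_1$ reduces the right-hand side to $\int_0^\tau(\dot u\dot q - a^+ u^3 q) + (\partial_\alpha\lambda)\int_0^\tau(\dot u\dot u_1 - a^+ u^3 u_1)$; the first piece is bounded by $\rho/16$ by the definition of $\rho$, while expanding $u_1$ in the second and invoking once more the integral identity together with the $\eps$-smallness of $|u(0)|, |u(\tau)|$ makes it of order $O(\eps)$, yielding the $\rho/8$ bound for $\eps$ small. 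The symmetric argument handles $\partial_\beta J$. For $\gamma = \alpha'$ one has $\partial_{\alpha'}U|_{P_0} = (\partial_{\alpha'}\lambda)u_1$ with $\partial_{\alpha'}\lambda|_{P_0}$ proportional to $u(0)$ via the implicit function theorem, so the whole quantity factors out $|u(0)|$ with a coefficient vanishing as $\eps \to 0^+$, yielding \eqref{formula3}. The subtlety throughout is to verify that every constant depends only on $r,\zeta,\rho,\eps$ and not on the specific $u$ in the admissible class.
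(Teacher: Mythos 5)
Your proposal is correct and follows essentially the same route as the paper's proof: the family $U_\lambda(\alpha,\beta,\alpha',\beta') = \lambda u_1 + \alpha q + \beta p$ is identical to the paper's auxiliary map $V(t;\alpha,\beta,\lambda)$, the implicit function theorem is applied to the same scalar function $F$, and the transversality computation together with the chain-rule estimates on $\partial_\gamma J_{[0,\tau]}$ match the argument in the paper step by step (the paper's $\chi_1,\chi_2$ are exactly your error terms arising from expanding $u_1 = u - u(0)q - u(\tau)p$).
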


\begin{proof}
We define
\begin{equation}\label{implicita}
F(\alpha,\beta,\alpha',\beta',\lambda) = \int_0^\tau\left( \dot V^2(t;\alpha,\beta,\lambda) - a^+ V^4(t;\alpha,\beta,\lambda)\right) -
(\beta\beta' - \alpha\alpha'),
\end{equation}
where, for $t \in [0,\tau]$,
$$
V(t;\alpha,\beta,\lambda) = \alpha q(t) + \beta p(t) + \lambda  \left[u(t) - u(0)q(t) - u(\tau)p(t)\right].
$$
By construction,
$$
V(t;u(0),u(\tau),1) = u(t), \quad V(0;\alpha,\beta,\lambda) = \alpha, \quad V(\tau;\alpha,\beta,\lambda) = \beta;
$$
moreover,  $F(P_0,1) = 0$.
Simple calculations show that
$$
\partial_\lambda F(P_0,1) = \int_0^\tau\left(2 \dot u^2 - 4a^+ u^4\right)-\chi_1(u),
$$
where
$$
\chi_1(u) = u(0) \int_0^\tau\left( 2 \dot u \dot q - 4 a^+ u^3 q\right) +
u(\tau) \int_0^\tau\left( 2 \dot u \dot p - 4 a^+ u^3 p\right).
$$
Since
$$
\int_0^\tau\left( 2 \dot u^2 - 4 a^+ u^4\right) = -2 \int_0^\tau \dot u^2 + 4 \left( u(\tau)\beta_0' - u(0)\alpha_0'\right),
$$
and, for $\eps \to 0^+$,
$$
\vert u(\tau)\beta_0' - u(0)\alpha_0' \vert + \vert \chi_1(u) \vert \to 0
$$
uniformly in the class of functions satisfying \eqref{ipotesilem1},
it holds that
$$
\partial_\lambda F(P_0,1) \leq -2r^2 +o_\eps(1)\leq -r^2,
$$
whenever $\eps > 0$ is small enough (depending on $r,\zeta,\rho$, but not on $u$).
Hence, the implicit function theorem applies,
yielding the existence of
$\lambda = \lambda(\alpha,\beta,\alpha',\beta')$
solving implicitly $F(\alpha,\beta,\alpha',\beta',\lambda) = 0$ near $(P_0,1)$.
Then, with the position
$$
U(\alpha,\beta,\alpha',\beta')(t) = V(t;\alpha,\beta,\lambda(\alpha,\beta,\alpha',\beta'))
$$
the points (i) and (ii) of the statement are proved.
We also notice that
$$
\partial_\alpha \lambda(P_0) = -\frac{\alpha'_0 + \int_0^\tau\left( 2\dot u \dot q - 4 a^+ u^3 q\right)}
{\partial_\lambda F(P_0,1)} \quad \mbox{ and } \quad
\partial_{\alpha'} \lambda (P_0)= -\frac{u(0)}{\partial_\lambda F(P_0,1)};
$$
hence
\begin{equation}\label{derivatelambda}
\left\vert \partial_\alpha \lambda(P_0) \right\vert \leq C' \quad \mbox{ and } \quad
\left\vert \partial_{\alpha'} \lambda(P_0) \right\vert \leq C'\vert u(0) \vert,
\end{equation}
where $C' > 0$ is a suitable constant depending only on $r,\zeta,\rho,\eps$.
Of course, similar estimates hold for the derivatives with respect to $\beta$ and $\beta'$.
From this consideration, \eqref{formula0} immediately follows.

To prove the final part of the statement, we observe that
\begin{align*}
\frac{\partial}{\partial\alpha}J_{[0,\tau]}(U)\vert_{P_0} & =
 \int_0^\tau \left[\dot U \partial_\alpha \dot U - a^+U^3 \partial_\alpha U \right]_{P_0}\\
& =
 \int_0^\tau \left[ \dot V \left(\partial_\alpha \dot V + \partial_\alpha \lambda \partial_\lambda \dot V \right)
 - a^+V^3\left(\partial_\alpha  V + \partial_\alpha \lambda \partial_\lambda  V \right) \right]_{P_0}
   \\
& = \partial_\alpha \lambda(P_0)  \int_0^\tau \left( \dot V \partial_\lambda \dot V - a^+ V^3 \partial_\lambda V\right)_{P_0}
+ \int_0^\tau \left( \dot V \partial_\alpha \dot V - a^+ V^3 \partial_\alpha V \right)_{P_0} \\
& = \partial_\alpha \lambda(P_0) \left[ \int_0^\tau \left( \dot u^2 - a^+ u^4\right) - \chi_2(u)\right] +
\int_0^\tau \left( \dot u \dot q - a^+ u^3 q\right),
\end{align*}
where
$$
\chi_2(u) = u(0) \int_0^\tau\left( \dot u \dot q - a^+ u^3 q\right) +
u(\tau) \int_0^\tau\left( \dot u \dot p - a^+ u^3 p\right).
$$
Since, for $\eps \to 0^+$,
$$
\left\vert \int_0^\tau \left( \dot u^2 - a^+ u^4\right)\right\vert +\vert \chi_2(u)\vert \to 0
$$
uniformly in the class of functions satisfying \eqref{ipotesilem1},
we have that \eqref{formula1} follows from the definition of $\rho$ in \eqref{sceltarho}, and \eqref{derivatelambda}
(the estimate for the derivative w.r.t. $\beta$ can be obtained in the same way).

With even simpler computations we find
\begin{align*}
\frac{\partial}{\partial\alpha'}J_{[0,\tau]}(U)\vert_{P_0} & = \partial_{\alpha'} \lambda(P_0) \int_0^\tau \left( \dot V \partial_\lambda \dot V - a^+ V^3 \partial_{\alpha} V_\lambda\right)_{P_0}\\
& = \partial_{\alpha'} \lambda(P_0) \left[ \int_0^\tau \left( \dot u^2 - a^+ u^4\right) - \chi_2(u)\right],
\end{align*}
so that, using similar arguments as above,
\eqref{formula3} follows.
\end{proof}

\subsection{An auxiliary boundary value problem}\label{subs:-}

In this section we establish some auxiliary results
dealing with the following boundary value problem set on $[-T+\tau,T]=I^-_{-1}\cup I^+_{0}\cup I^-_{0}$
\begin{equation}\label{connection}
\left\{
\begin{array}{l}
\vspace{0.2cm}
\ddot u + a_\mu(t)u^3 = 0 \\
\vspace{0.2cm}
u(\tau - T) = x, \; u(T) =  y \\
\int_{0}^{\tau} \dot u^2 < r^2,
\end{array}
\right.
\end{equation}
where $r > 0$ is as in \eqref{eq:r}.
We postpone more comments about the role
of the above written problem \eqref{connection} in connection
with the manifold $\mathcal{N}_\mu$ at the end of the Section
(see Remark \ref{remarkone}).

\begin{proposition}\label{propo:auxiliary}
For every $K> 0$, there exists $\mu^* > 0$ such that,
for any $\mu > \mu^*$ and $x,y \in [-K,K]$, problem \eqref{connection} admits a
unique solution $\bar{u}_\mu(t;x,y)$. Moreover,
\begin{itemize}
\item[(i)] $\Vert \bar{u}_\mu \Vert_{L^{\infty}(\tau-T,T)} \leq K$,
%and
%\[
%\Vert \dot{\bar{u}}_\mu \Vert_{L^{\infty}(\tau-T,T)} \leq \max \left( |\dot{\bar{u}}_\mu(T^-;x,y)|,|\dot{\bar{u}}_\mu((\tau-T)^+;x,y)| \right);
%\]
\item[(ii)] uniformly in $x,y \in [-K,K]$,
\begin{equation}\label{limitebaru}
\lim_{\mu \to +\infty}
\left(\Vert \bar{u}_\mu \Vert_{L^{\infty}(0,\tau)} +
\Vert \dot{\bar{u}}_\mu \Vert_{L^{\infty}(0,\tau)} \right) = 0,
\end{equation}
\item[(iii)] if $\bar{u}_\mu \not\equiv 0$ and
\begin{equation}\label{rotazioni}
\int_{\tau-T}^T \dot{\bar{u}}_\mu^2 < \left((2T-\tau)^3 \Vert a^+ \Vert_{L^\infty}\right)^{-1},
\end{equation}
then $\bar{u}_\mu$ cannot have neither more than one zero nor both a zero and a zero of the derivative in $[\tau-T,T]$.
As a consequence: if $xy>0$, then $\bar{u}_\mu$ has constant sign on $[\tau-T,T]$; if $xy\leq 0$,
then $\bar{u}_\mu$ vanishes exactly once in $[\tau-T,T]$ and $\dot{\bar{u}}_\mu(t) \neq 0$
for any $t \in [\tau-T,T]$.
\end{itemize}
\end{proposition}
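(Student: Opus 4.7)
We construct $\bar u_\mu$ by a shooting/matching argument on the decomposition $[\tau-T,T]=I^-_{-1}\cup I^+_0\cup I^-_0$, parameterizing candidate solutions by the intermediate values $\alpha=u(0)$ and $\beta=u(\tau)$. On each negative interval $I^-_j$, the equation $\ddot u=\mu a^-(t)u^3$ is ``elliptic'' in the sense that the Dirichlet problem with given boundary data admits a unique solution, obtained e.g. by minimization of the strictly convex coercive functional $u\mapsto\int(\tfrac12\dot u^2+\tfrac{\mu}{4}a^- u^4)$ (uniqueness follows from testing the difference of two solutions against itself and using the monotonicity of $s\mapsto s^3$). This yields $u^{-}_{-1}(\cdot;x,\alpha)$ on $[\tau-T,0]$ and $u^{-}_{0}(\cdot;\beta,y)$ on $[\tau,T]$; moreover, the convexity bounds \eqref{convess}--\eqref{convess_derivata} give at once property (i).

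On $I^+_0$, we seek $v=v(\cdot;\alpha,\beta)$ solving $\ddot v+a^+(t)v^3=0$ with $v(0)=\alpha$, $v(\tau)=\beta$ and $\int_0^\tau\dot v^2<r^2$. At $(\alpha,\beta)=(0,0)$, the only such solution is $v\equiv 0$: integration by parts gives $\int_0^\tau(\dot v^2-a^+v^4)=v(\tau)\dot v(\tau^-)-v(0)\dot v(0^+)=0$, so Lemma \ref{nehari1} applied with $\eps=0$ forces $\int_0^\tau\dot v^2=0$. Since the linearization $v\mapsto\ddot v$ at $v\equiv 0$ is an isomorphism on $H^2\cap H^1_0(0,\tau)$, the implicit function theorem yields a unique smooth family $v(\cdot;\alpha,\beta)$ in a neighborhood of the origin, with $v\to 0$ in $H^2$ as $(\alpha,\beta)\to(0,0)$. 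Any solution of \eqref{connection} is then a zero of the derivative-matching map
\[
\Phi_\mu(\alpha,\beta)=\bigl(\dot v(0^+;\alpha,\beta)-\dot u^{-}_{-1}(0^-;x,\alpha),\;\dot v(\tau^-;\alpha,\beta)-\dot u^{-}_{0}(\tau^+;\beta,y)\bigr).
\]
The inner part of its Jacobian at the origin reduces, at leading order, to the nondegenerate linear Dirichlet map for $\ddot v=0$; by contrast, the outer derivatives $\dot u^{-}_{\cdot}(0^-;\cdot),\,\dot u^{-}_{\cdot}(\tau^+;\cdot)$ are only weakly sensitive to $(\alpha,\beta)$ for $\mu$ large, because Lemma \ref{decay} implies that solutions of $\ddot u=\mu a^- u^3$ with bounded data form an $O(\mu^{-1/3})$ boundary layer. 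A parametric implicit function theorem, uniform in $(x,y)\in[-K,K]^2$ and $\mu\geq\mu^*$, then provides existence and uniqueness of the matched pair $(\alpha,\beta)$, and hence of $\bar u_\mu$.

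Property (ii) is obtained by combining the uniform bound of Lemma \ref{ulimitata} on $I^+_0$ (so that $|\dot u(0^\pm)|+|\dot u(\tau^\pm)|\leq M'$ a-priori) with Lemma \ref{upiccola}, applied with $R=M'$ to each of $I^-_{-1}$ and $I^-_0$ to obtain $|u(0)|+|u(\tau)|\to 0$ as $\mu\to+\infty$; inserting this into the identity $\int_0^\tau(\dot u^2-a^+u^4)=u(\tau)\dot u(\tau^-)-u(0)\dot u(0^+)$ and invoking Lemma \ref{nehari1} then forces $\int_0^\tau\dot u^2\to 0$, which is upgraded to the $\dot u$-estimate via \eqref{diselementare}. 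Property (iii) is a direct adaptation of Lemma \ref{pochizeri} to the full interval $[\tau-T,T]$: if $u\not\equiv 0$ had two zeros $t_1<t_2$, or a zero $t_1$ and a critical point $t_0\neq t_1$, multiplying the equation by $u$ and integrating on $[t_1,t_2]$ (respectively $[t_0,t_1]$) gives $\int\dot u^2=\int a_\mu u^4\leq\int a^+u^4$; the Sobolev inequality $\|u\|_\infty^2\leq(2T-\tau)\int\dot u^2$ (valid because $u$ vanishes at an endpoint) then yields $\int\dot u^2\leq \|a^+\|_\infty(2T-\tau)^3\bigl(\int_{\tau-T}^T\dot u^2\bigr)\int\dot u^2$, contradicting \eqref{rotazioni}. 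The consequences for the sign of $xy$ follow from the intermediate value theorem combined with the uniqueness of the zero from (iii).

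The main technical obstacle is the matching in Step 3: while the qualitative picture (nondegenerate inner linearization against vanishing outer sensitivity for $\mu$ large) is clear, making the implicit function argument rigorous and uniform in $(x,y)\in[-K,K]^2$ requires explicit quantitative control of $\partial_\alpha\dot u^{-}_{-1}(0^-;x,\alpha)$ as $\mu\to+\infty$, based on a careful boundary-layer analysis of the outer Dirichlet problem. The remaining properties, by contrast, reduce to combinations of the technical lemmas already established in Section \ref{sectecnica}.
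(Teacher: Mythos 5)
Your proposal follows a genuinely different route from the paper: you set up a shooting/matching scheme at the interior interfaces $t=0$ and $t=\tau$, reducing \eqref{connection} to the zero set of a finite-dimensional map $\Phi_\mu(\alpha,\beta)$, whereas the paper proves existence by minimizing $\widehat J_\mu$ over the closed convex set $\mathcal{M}(x,y)$ (where both constraints $|u(0)|\leq K$ and $\int_0^\tau\dot u^2\leq r^2$ are imposed as inequality constraints), and then shows in two separate steps that for $\mu$ large the minimizer stays strictly inside the constraints, so it is a free critical point. Uniqueness in the paper is a direct $w^-$-testing argument, exploiting the smallness of solutions on $[0,\tau]$. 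The variational route sidesteps entirely the question of invertibility of a matching Jacobian: one never needs to differentiate the outer Dirichlet map with respect to the interior boundary data.

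This matters because the heuristic you use to close the matching argument is not correct. You claim that $\partial_\alpha \dot u^-_{-1}(0^-;x,\alpha)$ is ``weakly sensitive'' for $\mu$ large, so that the Jacobian of $\Phi_\mu$ is dominated by the inner (Dirichlet) part. But the variation $z=\partial_\alpha u^-_{-1}$ solves $\ddot z = 3\mu a^-(u^-_{-1})^2 z$ with $z(\tau-T)=0$, $z(0)=1$; since the potential is nonnegative, $z$ is nonnegative and convex, hence $\dot z(0^-)\geq 1/(T-\tau)$, already comparable to the inner term $|\partial_\alpha \dot v(0^+)|\sim 1/\tau$ rather than negligible. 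Worse, when the relevant boundary value is small (which is precisely the regime forced by property (ii)), the boundary layer makes $\dot z(0^-)$ \emph{grow} with $\mu$ — notice that the paper's own estimate \eqref{derivata2} controls only the \emph{product} $x\dot v_\mu$, not $\dot v_\mu$ itself, because the latter can blow up as $x\to 0$. So the Jacobian of $\Phi_\mu$ is in fact dominated by the outer part, not the inner; it may still be invertible (the signs cooperate), but the argument would have to be rebuilt on that basis, and in a $\mu$-uniform, $(x,y)$-uniform way on a shrinking neighborhood of the origin. Since you yourself flag this as unresolved, this is the genuine gap in the proposal. By contrast, your arguments for points (i), (ii), (iii) are sound and essentially parallel to the paper's (modulo a slightly informal invocation of Lemma \ref{nehari1} ``at $\eps=0$'', which is easily justified by passing to the limit in that lemma's conclusion).
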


\begin{proof}
We start by recalling the following:
\emph{for every $K > 0$ and $\eps > 0$, there exists $\mu^* > 0$ such that,
for any $\mu > \mu^*$ and $x,y \in [-K,K]$, then any solution $u$ of \eqref{connection} satisfies}
\begin{equation}\label{quote}
\Vert u \Vert_{L^{\infty}(0,\tau)} +
\Vert \dot{u} \Vert_{L^{\infty}(0,\tau)} \leq \eps.
\end{equation}
This has already been proved in Lemma \ref{nehariprimeprop2} (ii)
and has many consequences. First of all, it implies \eqref{limitebaru}; from this, point (i) follows using convexity arguments
(see \eqref{convess}).
Second, it implies (iii). Indeed, if we assume that
$t_1,t_2 \in [\tau-T,T]$, $t_1<t_2$, are instants satisfying one of the above properties, multiplying the equation
$\ddot{u} +a_\mu(t)u^3 = 0$ by $u$ and integrating by parts on $[t_1,t_2]$ we find
$$
\int_{t_1}^{t_2} \dot u^2 = \int_{t_1}^{t_2} a_\mu u^4 \leq (2T-\tau) \Vert a^+ \Vert_{L^\infty} \Vert u \Vert^4_{L^{\infty}(t_1,t_2)}.
$$
Using the Sobolev inequality \eqref{sobolev} we obtain
$$
\int_{t_1}^{t_2} \dot u^2 \leq (2T-\tau)^3 \Vert a^+ \Vert_{L^\infty} \left(\int_{t_1}^{t_2} \dot u^2\right)^2,
$$
contradicting \eqref{rotazioni}.

Finally, also the uniqueness of the solution to \eqref{connection} follows from \eqref{quote}.
Indeed, assume that $u_i$, $i=1,2$, are two different solutions of
\eqref{connection} and define $w =u_2-u_1$; of course $w$ satisfies
\[
\begin{cases}
\ddot{w}+a_\mu(t) (u_2^2+u_1u_2+u_1^2)w =0\\
w(\tau-T) = w(T) = 0,
\end{cases}
\]
Multiplying the previous equation by $w^-$, after an integration on $[\tau-T,T]$ we obtain
\[
\begin{split}
\int_{\tau-T}^{T} (\dot w^-)^2
& = \int_{\tau-T}^{T} a_\mu(u_2^2+u_1 u_2 + u_1^2)(w^-)^2 \\
& \leq \int_{[0,\tau] \cap \mathrm{supp}(w^-)} a^+  (u_2^2+u_1 u_2 + u_1^2)(w^-)^2;
\end{split}
\]
from \eqref{quote} and the Poincar\'e inequality \eqref{poincare}
\[
\int_{\tau-T}^{T} (\dot w^-)^2
\leq \eps^2 (2T-\tau)^2 \Vert a^+ \Vert_{L^\infty} \int_{\tau-T}^{T} (\dot w^-)^2.
\]
As $\mu$ is sufficiently large we obtain that necessarily $w^- \equiv 0$, that is $u_2-u_1\geq0$. Exchanging the roles of $u_1$ and $u_2$,  we infer $u_1 \equiv u_2$.

\medbreak
We now turn to the existence, which is obtained via a variational argument
and is organized in three steps.
\smallbreak
\noindent
\emph{\underline{Step 1}. Reduction to a constrained minimization problem.}
\smallbreak
\noindent
Consider the minimization problem
\begin{equation}\label{probmin}
\inf_{u \in \mathcal{M}(x,y)}  \widehat{J}_{\mu}(u)
\end{equation}
where
$$
\widehat{J}_{\mu}(u) = \frac{1}{2}\int_{\tau-T}^T \dot u^2 - \frac{1}{4}\int_{\tau-T}^T a_\mu u^4, \qquad
u \in H^1(\tau - T,T),
$$
and
$$
\mathcal{M}(x,y) = \left\{ u \in H^1(\tau - T,T) \; \Big \vert \;
\begin{array}{l}
u(\tau-T) = x, \, u(T) =  y
\smallskip\\
\vert u(0) \vert \leq K
\smallskip\\
\int_{0}^{\tau} \dot u^2 \leq r^2
\end{array}
\right\}.
$$
We observe that, for any $\mu > 0$, such a minimization problem is for sure solvable.
Indeed, the constraints $\vert u(0) \vert \leq K$ and $\int_{0}^{\tau} \dot u^2 \leq r^2$
imply that
\begin{equation}\label{coerc}
\Vert u \Vert_{L^\infty(0,\tau)} \leq K + \sqrt{\tau} r, \quad \mbox{ for every } u \in \mathcal{M}(x,y).
\end{equation}
Hence, $\widehat J_\mu$ is coercive (and weakly lower semicontinuous) on the closed convex set
$\mathcal{M}(x,y)$ and the conclusion follows from classical arguments.

Of course (by writing the Euler-Lagrange equation associated with
\eqref{probmin}) we can prove that a solution of \eqref{probmin} solves \eqref{connection} whenever
$\vert u(0) \vert < K$ and
$\int_{0}^{\tau} \dot u^2 < r^2$. Steps 2 and 3 below will then be devoted to prove that
this is the case when $\mu$ is large enough. In the following, we use the notation
$$
K_{[t_1,t_2]}(u) =  \frac{1}{2}\int_{t_1}^{t_2} \dot u^2, \qquad
U_{\mu,[t_1,t_2]}(u) = \frac{1}{4}\int_{t_1}^{t_2} a_\mu u^4
$$
and
$$
\widehat{J}_{\mu,[t_1,t_2]}(u) = K_{[t_1,t_2]}(u) - U_{\mu,[t_1,t_2]}(u)
$$
for a subinterval $[t_1,t_2] \subset [\tau-T,T]$.

As a preliminary observation for the arguments below, we also notice that
every minimizer of \eqref{probmin} solves the equation
$\ddot u -\mu a^-(t)u^3 = 0$ on the interval $[\tau-T,0] \cup [\tau,T]$.
Indeed, variations vanishing on $[0,\tau]$ are admissible
for functions in $\mathcal{M}(x,y)$.
\smallbreak
\label{vincolo1}
\noindent
\emph{\underline{Step 2}. For $\mu$ large, the minimizers of \eqref{probmin} satisfy $\vert u(0) \vert < K$.}
\smallbreak
\noindent
Let $u = u_\mu$ be a minimizer of \eqref{probmin}
and assume by contradiction
that $\vert u(0) \vert = K$.
Our aim is to construct a function
$\widetilde{u} \in \mathcal{M}(x,y)$ such that
\begin{equation}\label{contr}
\widehat{J}_\mu(\widetilde{u}) < \widehat{J}_\mu(u),
\end{equation}
and thus contradicting the minimality of $u$.

To this end, we first construct $\widetilde{u}$ on $[\tau-T,0]$.
Let $\delta >0$ be small; according to Lemma \ref{decay},
we have that $\vert u(t) \vert < K/4$ for every $t \in [\tau-T+\delta,-\delta]$,
provided $\mu$ is chosen so large that
\begin{equation}\label{sceltamu}
C_\delta \left( \frac{K}{\mu} \right)^{1/3} < \frac{K}{4},
\end{equation}
with $C_\delta$ the constant appearing in \eqref{paregiusto}.
Then, we choose three points $t_1,t_2$ and $t^*$ in
$[\tau-T,0]$ as follows: $t_1$ is the minimum of
$\vert u \vert$ (hence $\vert u(t_1) \vert < K/4$);
$t_2 \geq t_1$ is the unique point such that
$$
u(t_2) = 2 u(t_1)
$$
(hence $\vert u(t_2) \vert < K/2$);
$t^* > t_2$ is such that
\begin{equation}\label{t*}
\vert u(t^*) \vert = \frac{K}{2}.
\end{equation}
Notice that $t^* \geq -\delta$.
Set now
$$
\widetilde{u}(t) =
\left\{
\begin{array}{ll}
\vspace{0.2cm}
u(t) & \; \mbox{ if } t \in [\tau-T,t_1] \\
\vspace{0.2cm}
2 u(t_1) - u(t) & \; \mbox{ if } t \in [t_1,t_2] \\
0 & \; \mbox{ if } t \in [t_2,0]
\end{array}
\right.
$$
(of course, the interval $[\tau-T,t_1]$
is empty if the minimum of $\vert u \vert$ is achieved at $\tau-T$, while
the interval $[t_1,t_2]$ is empty if $u$ changes sign on $[\tau-T,0]$,
namely $u(t_1) = 0$). Notice that:
\begin{itemize}
\item on $[\tau - T,t_1]$, $u = \widetilde{u}$, so that
$\widehat{J}_{\mu,[\tau-T,t_1]}(u) = \widehat{J}_{\mu,[\tau-T,t_1]}(\widetilde u)$;
\item on $[t_1,t_2]$, $\vert \dot u \vert = \vert \dot{\widetilde{u}}\vert$ and
$\vert u \vert \geq \vert\widetilde{u} \vert$, so that
$K_{[t_1,t_2]}(u) = K_{[t_1,t_2]}(\widetilde u)$ and
$U_{\mu,[t_1,t_2]}(u) \leq U_{\mu,[t_1,t_2]}(\widetilde u)$;
hence $\widehat{J}_{\mu,[t_1,t_2]}(u) \geq \widehat{J}_{\mu,[t_1,t_2]}(\widetilde u)$
\item on $[t_2,0]$, $\widehat{J}_{\mu,[t_2,0]}(\widetilde u) = 0$ while (since $K/2 = |u(0)|-|u(t^*)| \geq \int_{t^*}^0 |\dot{u}|$)
\begin{equation}\label{crucial}
\widehat{J}_{\mu,[t_2,0]}(u) \geq \frac{1}{2}\int_{t^*}^{0} \dot u^2 \geq \frac{K^2}{8}\vert t^* \vert^{-1}
\geq \frac{K^2}{8}\delta^{-1}.
\end{equation}
\end{itemize}
To construct $\widetilde{u}$ on $[0,T]$, we distinguish
two possibilities. Set
$$
K' = r\sqrt{\tau}.
$$
If $\vert u(\tau) \vert < K'$, we define
$$
\widetilde{u}(t) =
\left\{
\begin{array}{ll}
\vspace{0.2cm}
u(\tau)\frac{t}{\tau} & \; \mbox{ if } t \in [0,\tau] \\
u(t) & \; \mbox{ if } t \in [\tau,T].
\end{array}
\right.
$$
Notice that
$$
\int_{0}^{\tau} \dot{\widetilde{u}}^2 < \frac{(K')^2}{\tau} = r^2
$$
so that $\widetilde{u} \in \mathcal{M}(x,y)$. If $\vert u(\tau) \vert \geq K'$, we set
$\widetilde{u}(t) = 0$ for $t \in [0,\tau]$
and then define $\widetilde{u}(t)$ on $[\tau,T]$
in a similar way to what has been done on $[\tau-T,0]$
(with $K$ replaced by $K'$).
In any case, we have that:
\begin{itemize}
\item $\widehat{J}_{\mu,[\tau,T]}(u) \geq \widehat{J}_{\mu,[\tau,T]}(\widetilde u)$;
\item in view of \eqref{coerc}, both $\widehat{J}_{\mu,[0,\tau]}(u)$
and $\widehat{J}_{\mu,[0,\tau]}(\widetilde u)$
are bounded below by a constant independent of $u$ and $\mu$.
\end{itemize}
At this point, it is enough to observe that
for $\delta \to 0^+$ the term in \eqref{crucial} goes to infinity.
In view of \eqref{sceltamu}, we thus have a contradiction for $\mu$ large enough.
\smallbreak
\label{vincolo2}
\noindent
\emph{\underline{Step 3}. For $\mu$ large, the minimizers of \eqref{probmin} satisfy $\int_{0}^{\tau} \dot u^2 < r^2$.}
\smallbreak
\noindent
Before starting the proof, we observe the following: for every $\eps > 0$, if $\mu$ is large enough
any minimizer of \eqref{probmin} satisfies $\vert u(0) \vert, \vert u(\tau) \vert \leq \eps$.
This can be shown using the very same arguments of the previous Step 2.
Hence, arguing as in the proof of Lemma \ref{nehari1} - and recalling the choice of $r$ in \eqref{eq:r} - of we can see that the following relation holds
\begin{equation}\label{relazione}
\int_0^\tau a^+ u^4 \leq K_\eps + \frac{1}{4 r^2} \left( \int_{0}^{\tau} \dot u^2 \right)^2,
\end{equation}
where $K_\eps = 8 \Vert a^+ \Vert_{L^\infty} \tau \eps^4$.

Now, let $u = u_\mu$ be a minimizer of \eqref{probmin} and assume by contradiction that
$\int_{0}^{\tau} \dot u^2 = r^2$; as in Step 2, we aim at constructing $\widetilde{u} \in \mathcal{M}(x,y)$
satisfying \eqref{contr}. Here we simply define $\widetilde{u}(t) = u(t)$ for $t \notin [0,\tau]$
and $\widetilde{u}|_{[0,\tau]}$ as the affine functions for $(0,u(0))$ and $(\tau,u(\tau))$.
Since $\vert u(0) \vert, \vert u(\tau) \vert \leq \eps$, we see that $\widetilde{u} \in \mathcal{M}(x,y)$
for $\eps$ small enough and
\begin{equation}\label{rel1}
\widehat{J}_{\mu,[0,\tau]}(\widetilde u) = o(1), \quad \mbox{ for } \, \eps \to 0^+.
\end{equation}
On the other hand, taking into account \eqref{relazione} we have
\begin{equation}\label{rel2}
\widehat{J}_{\mu,[0,\tau]}(u) \geq \frac{r^2}{2}\left( 1 - \frac{1}{8}\right) - \frac{K_\eps}{4} = \frac{7}{14}r^2 - \frac{K_\eps}{4} > 0
\end{equation}
for $\eps$ small enough. Combining \eqref{rel1} and \eqref{rel2}, we have that \eqref{contr}
holds true for
$\mu$ large enough, as desired.
\end{proof}

In the next proposition we collect some useful properties of the solution
$\bar u_\mu(t;x,y)$.

\begin{proposition}\label{propsoluzione}
Under the assumptions of Proposition \ref{propo:auxiliary},
the map $(x,y) \mapsto \bar u_\mu(t;x,y)$ $\in W^{2,\infty}(\tau-T,T)$ is of class $C^1$;
setting
$$
v_\mu(t;x,y) = \frac{\partial}{\partial x} \bar u_\mu(t;x,y),
\qquad
z_\mu(t;x,y) = \frac{\partial}{\partial y} \bar u_\mu(t;x,y),
$$
the following hold true (up to enlarging $\mu^*$ if necessary):
\begin{itemize}
\item[$(i)$] $v_\mu$, $z_\mu$ solve, respectively, the linear boundary value problems
\begin{equation}\label{bvp}
\left\{
\begin{array}{l}
\vspace{0.2cm}
\ddot{v} + 3 a_\mu(t) \bar u_\mu^2(t;x,y) v  = 0 \\
v(\tau-T) = 1, \; v(T) =  0
\end{array}
\right.
\qquad
\left\{
\begin{array}{l}
\vspace{0.2cm}
\ddot{z} + 3 a_\mu(t) \bar u_\mu^2(t;x,y) z  = 0 \\
z(\tau-T) = 0, \; z(T) =  1,
\end{array}
\right.
\end{equation}
and, moreover, $v_\mu$ is positive and decreasing and
$z_\mu$ is positive and increasing;
\item[$(ii)$] it holds
\begin{equation}\label{derliv1}
\frac{\partial }{\partial x} J_\mu(\bar u_\mu(\cdot;x,y)) = - \dot{\bar{u}}_\mu((\tau-T)^+;x,y)
\end{equation}
and
$$
\frac{\partial }{\partial y} J_\mu(\bar u_\mu(\cdot;x,y)) = \dot{\bar{u}}_\mu(T^-;x,y);
$$
\item[$(iii)$] it holds
\begin{equation}\label{derivata1}
\vert \dot v_\mu(T^-;x,y) \vert, \, \vert \dot z_\mu((\tau-T)^+;x,y) \vert \leq \frac{2}{2T-\tau}
\end{equation}
and
\begin{equation}\label{derivata2}
\vert x \dot v_\mu((\tau-T)^+;x,y) \vert, \, \vert y \dot z_\mu(T^-;x,y) \vert
\leq \frac{2K}{2T-\tau} + 5 \Vert \dot{\bar{u}}_\mu \Vert_{L^{\infty}(T-\tau,T)};
\end{equation}
moreover, if $\max(\vert \dot{\bar{u}}_\mu((\tau-T)^+;x,y) \vert, \vert \dot{\bar{u}}_\mu(T^-;x,y)\vert) \leq \rho$,
then
\begin{equation}\label{derivata2b}
\vert x \dot v_\mu((\tau-T)^+;x,y) \vert, \, \vert y \dot z_\mu(T^-;x,y) \vert
\leq \frac{2K}{2T-\tau} + 5 \rho;
\end{equation}
\item[$(iv)$] if $\bar{u}_\mu$ has constant sign on $[\tau-T,T]$,
and
$$
- \dot{\bar{u}}_\mu((\tau-T)^+;x,y) = \dot{\bar{u}}_\mu(T^-;x,y) = \rho \quad \mbox{ if } \, \bar{u}_\mu > 0
$$
$$
\dot{\bar{u}}_\mu((\tau-T)^+;x,y) = - \dot{\bar{u}}_\mu(T^-;x,y) = \rho \quad \mbox{ if } \, \bar{u}_\mu < 0,
$$
then it holds
\begin{equation}\label{derivata3}
\dot v_\mu((\tau-T)^+;x,y) + \dot z_\mu((\tau-T)^+;x,y) < 0 < \dot v_\mu(T^-;x,y) + \dot z_\mu(T^-;x,y).
\end{equation}
\end{itemize}
In points (iii) and (iv) of the above statement, $\rho$ is as in \eqref{sceltarho}.
\end{proposition}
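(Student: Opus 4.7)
The four assertions require different tools: (i) and (ii) are essentially routine consequences of the existence/uniqueness already established in Proposition~\ref{propo:auxiliary}, while (iii) and (iv) are the substantive content and rely on careful ODE estimates that exploit the sign structure of $a_\mu$ on the three subintervals $[\tau-T,0]$, $[0,\tau]$, and $[\tau,T]$.

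For (i), the $C^1$-dependence follows from the implicit function theorem applied to $F(u,x,y)=(\ddot u+a_\mu u^3,\,u(\tau-T)-x,\,u(T)-y)$: invertibility of the linearization $Lv=\ddot v+3a_\mu\bar u_\mu^2 v$ under homogeneous Dirichlet conditions is the linear analogue of the uniqueness statement for \eqref{connection}, proved by essentially the same $L^\infty$-smallness argument used in Step~3 of Proposition~\ref{propo:auxiliary}. Differentiating the identities $\bar u_\mu(\tau-T;x,y)=x$ and $\bar u_\mu(T;x,y)=y$ yields the linear BVPs \eqref{bvp}. For positivity and monotonicity: on $[\tau-T,0]\cup[\tau,T]$ the coefficient $3a_\mu\bar u_\mu^2=-3\mu a^-\bar u_\mu^2$ is nonpositive, so $v_\mu$ is convex where positive and concave where negative, while on $[0,\tau]$ it is arbitrarily small for $\mu$ large by Proposition~\ref{propo:auxiliary}(ii); an interior-extremum/disconjugacy argument then rules out sign changes and forces $v_\mu$ to be monotone decreasing (analogously for $z_\mu$). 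For (ii), using $\ddot{\bar u}_\mu=-a_\mu\bar u_\mu^3$ and integrating by parts,
\[
\partial_x J_\mu(\bar u_\mu)=\int\dot{\bar u}_\mu\dot v_\mu-\int a_\mu\bar u_\mu^3 v_\mu=[\dot{\bar u}_\mu v_\mu]_{\tau-T}^{T}=-\dot{\bar u}_\mu((\tau-T)^+),
\]
and analogously at $T$.

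For (iii), the first bound \eqref{derivata1} follows from the convexity of $v_\mu$ on $[\tau-T,0]$ and $[\tau,T]$ combined with its near-affine behavior on $[0,\tau]$: a function convex throughout $[\tau-T,T]$ with boundary values $1$ and $0$ would lie below the secant $\ell(t)=(T-t)/(2T-\tau)$, giving $v_\mu(\tau)\leq(T-\tau)/(2T-\tau)$; absorbing the (small for large $\mu$) deviation on $[0,\tau]$ one still has $v_\mu(\tau)\leq 2(T-\tau)/(2T-\tau)$, and the convex-function secant estimate on $[\tau,T]$ then gives $|\dot v_\mu(T^-)|\leq v_\mu(\tau)/(T-\tau)\leq 2/(2T-\tau)$ (the argument for $\dot z_\mu((\tau-T)^+)$ is symmetric). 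For \eqref{derivata2}-\eqref{derivata2b}, I would integrate the Wronskian-type identity $[\dot v_\mu\bar u_\mu-\dot{\bar u}_\mu v_\mu]'=2a_\mu\bar u_\mu^3 v_\mu$ over $[\tau-T,T]$ and solve for $x\dot v_\mu((\tau-T)^+)$; every term on the resulting right-hand side is then controlled by (ii), by \eqref{derivata1}, and by the hypothesis on $\|\dot{\bar u}_\mu\|_{L^\infty}$ (respectively the $\rho$-assumption in \eqref{derivata2b}).

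For (iv), set $w_\mu:=v_\mu+z_\mu$, so that $w_\mu(\tau-T)=w_\mu(T)=1$. The Wronskian-like identity $(\dot{\bar u}_\mu w_\mu-\dot w_\mu\bar u_\mu)'=2a_\mu\bar u_\mu^3 w_\mu$ integrated on $[\tau-T,0]$, after inserting the hypothesis $\dot{\bar u}_\mu((\tau-T)^+)=-\rho$, yields
\[
x\dot w_\mu((\tau-T)^+)=-\rho-\dot{\bar u}_\mu(0)w_\mu(0)+\dot w_\mu(0)\bar u_\mu(0)+2\int_{\tau-T}^0 a_\mu\bar u_\mu^3 w_\mu.
\]
On the right-hand side the integral is nonpositive (since $a_\mu\leq 0$ on $[\tau-T,0]$ while $\bar u_\mu,w_\mu>0$ by the hypothesis of (iv) and part (i)); the ``interior'' boundary contribution at $t=0$ is bounded uniformly in $\mu$ via Proposition~\ref{propo:auxiliary}(ii) together with the bounds of (iii); and $\rho$, as fixed in \eqref{sceltarho}, was precisely chosen to dominate these $\mu$-uniform contributions, so $x\dot w_\mu((\tau-T)^+)<0$, and since $x>0$ one obtains the first inequality of \eqref{derivata3}; the second follows symmetrically from the same identity on $[\tau,T]$ using $\dot{\bar u}_\mu(T^-)=\rho$. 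The main technical hurdle lies in (iii): quantifying the ``factor $2$'' perturbation from affine behavior on $[0,\tau]$ sharply enough to secure the constant $2/(2T-\tau)$ uniformly for all $\mu$ large.
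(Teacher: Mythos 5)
Your proposal follows the paper's general strategy for (i) and (ii) and produces the correct conclusions, but it departs from the paper's arguments in (iii) and (iv) in ways worth flagging.

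For the $C^1$-dependence and \eqref{bvp} in (i), both you and the paper invoke the implicit-function theorem together with nondegeneracy; for positivity and monotonicity of $v_\mu$ the paper multiplies the linearized equation by $v_\mu$ and integrates from $t$ to $T$, concluding via Poincar\'e that $v_\mu\dot v_\mu<0$ on $[\tau-T,T)$, whereas you sketch a disconjugacy/convexity argument. Both are viable. Point (ii) is identical.

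In (iii), you correctly identify \eqref{derivata1} as your weak spot. The paper's argument avoids the convex/concave/convex bookkeeping altogether: by the mean value theorem there is $\bar t$ with $\dot v_\mu(\bar t)=-1/(2T-\tau)$; then $\dot v_\mu(T)=\dot v_\mu(\bar t)-3\int_{\bar t}^T a_\mu\bar u_\mu^2 v_\mu\geq-\frac{1}{2T-\tau}-3\Vert a^+\Vert_\infty\Vert\bar u_\mu\Vert_{L^\infty(0,\tau)}^2$, and combined with $\dot v_\mu\leq0$ this gives \eqref{derivata1} immediately for $\mu$ large by \eqref{limitebaru}. Your secant-estimate route requires quantifying the deviation from affine behavior on all three subintervals; the MVT argument sidesteps this, and you would be better served by it. For \eqref{derivata2}, your Wronskian $(\dot v_\mu\bar u_\mu-\dot{\bar u}_\mu v_\mu)'=-2a_\mu\bar u_\mu^3 v_\mu$ (note the sign) leaves the term $\int a_\mu\bar u_\mu^3 v_\mu$ on the right, which is not \emph{directly} controlled by (ii), (iii) or $\Vert\dot{\bar u}_\mu\Vert_\infty$; one needs to use $a_\mu\bar u_\mu^3=-\ddot{\bar u}_\mu$ and integrate by parts once more, which effectively recovers the paper's identity $3\ddot{\bar u}_\mu v_\mu-\bar u_\mu\ddot v_\mu=0$, and then one bounds $\int|\dot{\bar u}_\mu\dot v_\mu|$ using $\int_{\tau-T}^T|\dot v_\mu|=1$. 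Your sketch is essentially correct once this is filled in, but the paper's chosen linear combination is designed precisely so that no such intermediate integral appears.

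In (iv) your direct Wronskian argument is genuinely different from the paper's, and in fact cleaner: the paper argues by contradiction, assuming $\dot w_\mu(\tau-T)\geq0$, deduces $w_\mu\geq1$ on $[\tau-T,0]$, locates an auxiliary $t^*\in[0,\tau]$ with $\dot w_\mu(t^*)=0$, and then chains estimates to show $\dot{\bar u}_\mu\leq-\rho/2$ on $[\tau-T,0]$, contradicting $\bar u_\mu(0)>0$ using $\rho>2K/(T-\tau)$. Your argument instead reads the sign of $x\dot w_\mu((\tau-T)^+)$ directly off the identity $x\dot w_\mu((\tau-T)^+)=-\rho-\dot{\bar u}_\mu(0)w_\mu(0)+\bar u_\mu(0)\dot w_\mu(0)+2\int_{\tau-T}^0 a_\mu\bar u_\mu^3 w_\mu$, using the sign of the integral and the smallness of the data at $t=0$. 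This works and does not even exploit the specific size of $\rho$ (so your remark that $\rho$ ``was precisely chosen'' to dominate these terms is misplaced in your framework — they tend to $0$ and any positive $\rho$ would do for $\mu$ large). However, there is a gap you should close: the bound on $\dot w_\mu(0)$ does \emph{not} follow from Proposition \ref{propo:auxiliary}(ii) nor from part (iii) as you cite; it follows from the elementary estimate \eqref{diselementare} on $[0,\tau]$ combined with $0\leq w_\mu\leq2$ and the fact that $\ddot w_\mu=-3a^+\bar u_\mu^2 w_\mu$ is uniformly small on $[0,\tau]$ for $\mu$ large. Once this is inserted, the product $\bar u_\mu(0)\dot w_\mu(0)$ indeed tends to $0$ and your conclusion stands.
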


\begin{proof}
As already proved, $\bar u_\mu$ is unique; in the same spirit, one can use property
\eqref{quote}, which says that the minimization problem is set in an almost convex case,
to show that $\bar u_\mu$ is also non degenerate (i.e. that the linearized equation, with
homogeneous Dirichlet boundary conditions, has no nontrivial solution). Using this and the
Fredholm's Alternative, the $C^1$-dependence
from $x$ and $y$ follows in a standard way (see \cite[Section 5]{OrtVer04} for the full details
in a similar situation).

Now we prove separately each point of the statement; in points
(i),(ii),(iii) we concentrate our attention on $v_\mu$, the proof for $z_\mu$ being analogous
(for simplicity of notation, we omit the dependence on $x$ and $y$ if no confusion is possible).
\smallbreak
The fact that $v_\mu$ solves the boundary value problem in \eqref{bvp} is well known;
we prove that $v_\mu$ is positive and decreasing.
Multiplying the equation in \eqref{bvp} by $v_\mu$ and integrating by parts, we find
for any $t \in [\tau-T,T[$
$$
v_\mu(t)\dot v_\mu(t) = -\int_t^T \dot v_\mu^2 + 3 \int_t^T a_\mu \bar{u}_\mu^2 v^2
\leq -\int_t^T \dot v_\mu^2 + 3 \Vert a^+ \Vert_{L^{\infty}} \Vert \bar{u}_\mu \Vert^2_{L^{\infty}(0,\tau)} \int_t^T v_\mu^2.
$$
By \eqref{poincare} we have
$$
v_\mu(t)\dot v_\mu(t) \leq \left( -1 + 3 (2T-\tau)^2 \Vert a^+ \Vert_{L^{\infty}} \Vert \bar{u}_\mu \Vert^2_{L^{\infty}(0,\tau)}\right) \int_t^T \dot v_\mu^2.
$$
Recalling \eqref{limitebaru}, we finally obtain, for $\mu$ large,
$$
v_\mu(t)\dot v_\mu(t) \leq -\frac{1}{2} \int_t^T \dot v_\mu^2 < 0,
\quad \mbox{ for every } t \in [\tau-T,T],
$$
and this proves our claim.
\smallbreak
To prove \eqref{derliv1}, we simply observe that - integrating by parts -
$$
\frac{\partial }{\partial x} J_\mu(\bar u_\mu) = \int_{\tau-T}^T \left( \dot{\bar u}_\mu \dot v_\mu - a_\mu \bar u_\mu^3 v_\mu \right)
= [\dot{\bar u}_\mu v_\mu]_{\tau-T}^T - \int_{\tau-T}^T \left( \ddot{\bar u}_\mu  + a_\mu \bar u_\mu^3 \right) v_\mu,
$$
whence the conclusion.
\smallbreak
As for \eqref{derivata1}, we first use the mean value theorem
to find $\bar{t} \in [\tau-T,T]$ such that
$\dot v_\mu(\bar{t}) = -1/(2T-\tau)$. From this, and using the fact that $0 \leq v_\mu \leq 1$,
\begin{align*}
\dot v_\mu(T) & = \dot v_\mu(\bar{t}) + \int_{\bar{t}}^T \ddot v_\mu =
- \frac{1}{2T-\tau} - 3\int_{\bar{t}}^T a_\mu\bar{u}_\mu^2v_\mu \\
& \geq - \frac{1}{2T-\tau}- 3 \Vert a^+ \Vert_{L^{\infty}} \Vert \bar{u}_\mu \Vert^2_{L^{\infty}(0,\tau)}.
\end{align*}
Recalling \eqref{limitebaru}, we conclude
for $\mu$ large. As for \eqref{derivata2}, taking into account the equations for $\bar{u}_\mu$ and  $v_\mu$,
we obtain
$$
3\ddot{\bar{u}}_\mu(t)v_\mu(t)-\bar{u}_\mu(t) \ddot v_\mu(t) = 0.
$$
Integrating by parts we have
$$
- 3\dot{\bar u}_\mu(\tau-T) + x \dot v_\mu(\tau-T) - y \dot v_\mu(T)  - 2\int_{\tau-T}^{T}
\dot{\bar{u}}_\mu \dot v_\mu = 0.
$$
Now, the term $y \dot v_\mu(T)$ can be estimated using \eqref{derivata1}, while for the integral we have
$$
\left\vert\int_{\tau-T}^{T}\dot{\bar{u}}_\mu(t)\dot v_\mu(t)\right\vert \leq
- \Vert \dot{\bar{u}}_\mu \Vert_{L^{\infty}(\tau-T,T)}\int_{\tau-T}^{T} \dot v_\mu = \Vert \dot{\bar{u}}_\mu \Vert_{L^{\infty}(\tau-T,T)}.
$$
This proves \eqref{derivata2}. As for \eqref{derivata2b}, we just need to observe that
$$
\max(\vert \dot{\bar{u}}_\mu(\tau-T;x,y) \vert, \vert \dot{\bar{u}}_\mu(T;x,y)\vert) \leq \rho
\quad \Longrightarrow \quad \Vert \dot{\bar u}_\mu \Vert_{L^\infty(\tau-T,T)} \leq \rho.
$$
This follows from \eqref{limitebaru}, together with convexity arguments
(see \eqref{convess_derivata}).
\smallbreak
Finally, we prove \eqref{derivata3}; we concentrate on the case $\bar u_\mu > 0$, the other being analogous.
Set $w_\mu = v_\mu + z_\mu$; then
$w_\mu$ solves the boundary value problem
$$
\left\{
\begin{array}{l}
\vspace{0.2cm}
\ddot{w} + 3 a_\mu(t) \bar u_\mu^2(t;x,y) w  = 0 \\
w(\tau-T) = w(T) = 1,
\end{array}
\right.
$$
it holds $0 \leq w_\mu \leq 2$
and our thesis reads as $\dot w_\mu(\tau-T) < 0 < \dot w_\mu(T)$.
Let us assume by contradiction that
$\dot w_\mu(\tau-T) \geq 0$ (the argument for $\dot w_\mu(T)$ being the same);
then, usual convexity arguments imply that
$w_\mu(t) \geq 1$ for $t \in [\tau-T,0]$ and that
there exists $t^* \in [0,\tau]$ such that
\[
\dot w_\mu(t^*) = 0
\]
(indeed, $\dot w_\mu(0)>0$ and $w_\mu(0)>1$; hence, if $\dot w_\mu(t)>0$ for every $t \in [0,\tau]$, then also $w_\mu(\tau)>1$ and, by convexity, $w_\mu(T)>1$, a contradiction).

Now, by Proposition \ref{propo:auxiliary}, (i), $\bar u_\mu\leq K$ on $[\tau-T,0]$; being $w_\mu\geq 1$ on the same interval, we deduce that $3 \ddot{\bar{u}}_\mu \leq K \ddot w_\mu$. Hence, for every $t \in [\tau-T,0]$,
\[
\begin{split}
\dot{\bar u}_\mu(t) &= \dot{\bar u}_\mu(\tau -T) + \int_{\tau-T}^t \ddot{\bar u}_\mu
\leq -\rho + \frac{K}{3} \int_{\tau-T}^t \ddot w_\mu \\
&= - \rho + \frac{K}{3} \left( \dot w_\mu(t) - \dot w_\mu(\tau-T)\right) \leq - \rho + \frac{K}{3} \dot w_\mu(t) = -\rho - \frac{K}{3} \int_{t}^{t^*} \ddot w_\mu \\
&\leq -\rho + K \int_t^{t^*} a_\mu \bar u_\mu^2 w_\mu \leq -\rho + 2K  \Vert a^+ \Vert_{L^{\infty}} \Vert \bar{u}_\mu \Vert^2_{L^{\infty}(0,\tau)}.
\end{split}
\]
In particular, according to \eqref{limitebaru} we can assume that
$\dot{\bar u}_\mu(t) \leq -\rho/2$ for every $t \in [\tau-T,0]$
provided $\mu$ is large enough. Hence (recall \eqref{sceltarho})
$$
\bar u_\mu(0) \leq K - \frac{\rho}{2}(T-\tau) \leq 0,
$$
contradicting the fact that $\bar u_\mu > 0$.
\end{proof}

We conclude this section with an important remark.

\begin{remark}\label{remarkone}
We observe at first that an existence and uniqueness result holds true,
for any $\mu > 0$, for the boundary value problem
\begin{equation}\label{connectionr}
\left\{
\begin{array}{l}
\vspace{0.2cm}
\ddot u - \mu a^-(t)u^3 = 0 \\
u(\tau) = x, \; u(T) =  y;
\end{array}
\right.
\end{equation}
moreover (by convexity arguments), nontrivial solutions to \eqref{connectionr}
are of constant sign if $xy > 0$, and are strictly monotone and vanish exactly once if
$xy \leq 0$. The proof of these facts is straightforward, due to the coercivity and convexity of the action
functional associated with \eqref{connectionr}. Moreover,
a result completely analogous to Proposition \ref{propsoluzione} can be proved
for \eqref{connectionr}
with simpler arguments (using, again, the convexity properties of the solutions).
From this point view, Propositions \ref{propo:auxiliary}
and \ref{propsoluzione} can be viewed as an extension of the more elementary results holding
for \eqref{connectionr} to the boundary value problem \eqref{connection}.
Roughly speaking, it can be said that -
as the integral bound $\int_0^\tau \dot u^2 < r^2$ is considered - problem \eqref{connection}
behaves exactly like \eqref{connectionr} when $\mu$ is large (notice, indeed, that - by point (ii)
of Proposition \ref{propo:auxiliary} - solutions to \eqref{connection} are arbitrarily small
on $[0,\tau]$ when $\mu \to +\infty$).

We now claim that Propositions \ref{propo:auxiliary} and \ref{propsoluzione}
can be generalized for the problem
\begin{equation}\label{connectionr2}
\left\{
\begin{array}{l}
\vspace{0.2cm}
\ddot u + a_\mu(t)u^3 = 0 \\
\vspace{0.2cm}
u(\tau_i) = x, \; u(\sigma_{i+l+1}) =  y \\
\int_{\sigma_j}^{\tau_j} \dot u^2 < r^2 \quad \mbox{ for } j = i+1,\ldots,i+l,
\end{array}
\right.
\end{equation}
(of course, \eqref{connection} corresponds to the case $i=-1$ and $l=1$).
The proofs can be obtained following exactly the same arguments given before,
at the only expense of an unpleasant
overloading of the notation. An important warning, however, is that in such a case the value
$\mu^*$, as well as the constants in \eqref{derivata1}, \eqref{derivata2} and \eqref{derivata2b},
are depending on the integer $l$, that is, on the number of intervals of positivity
of the weight function $a_\mu$ in the interval $[\tau_i,\sigma_{i+l+1}]$.
Accordingly, all these constants can me made uniform
for all problems of the type \eqref{connectionr2} with $l$ less than or equal to
a common bound $k$ (compare with assumption \eqref{ipotesil}).

It now should be clear that Proposition \ref{propo:auxiliary}
can be viewed as a tool for characterizing (and constructing) functions in $\mathcal{N}_\mu$
on intervals of the type
$$
I^-_{i} \cup I^+_{i+1} \cup I^-_{i+1} \cup \ldots \cup I^-_{i+l},
\quad \mbox{ with } i+1,\ldots,i+l \not\in L.
$$
By the assumption \eqref{ipotesil}, intervals of this type can exist only if $l \leq k$,
and this allows to determine a precise value $\mu^*$ such that the construction explained in this section is possible.
It is worth noticing that, if $\mu > \mu^*$ is fixed, solutions of the boundary value problem
\eqref{connectionr2} give rise to (a restriction of) a function in
$\mathcal{N}_\mu$ only if $x,y$ are small enough. Indeed, the further condition
(C4) has to be satisfied (see Lemma \ref{upiccola}).

In the next Section \ref{secexistence}, we will actually deal (for simplicity of notation) only with
problem \eqref{connection}, that is, we will assume that $k = 1$.
As remarked above, however, the general case $k \geq 1$ could be treated as well.
\end{remark}

\section{Construction of a constrained Palais-Smale sequence}\label{secexistence}

The aim of this section is to construct a (bounded) constrained Palais-Smale sequence $(u_n) \subset \mathcal{N}_\mu$
at level \eqref{consmin}. This will be done using Ekeland's variational principle
\cite[Chapter 4]{DeF89} in a careful way.
Again (compare with the discussion at the beginning of Section
\ref{secproofnehari}) this construction will possibly require to enlarge $\mu^*$ more and more times,
according to the estimates collected in the previous sections (and ultimately depending on the local estimate
contained in Lemma \ref{upiccola}). In this way, we can still produce a threshold $\mu^*$
depending on the weight function $a$ and on the integer $k$,
but not on the sequence $\mathcal{L}$ and the integer $N$.
\medbreak
We now start with our arguments. The Ekeland's variational principle applied to $\overline{\mathcal{N}}_\mu$
yields the existence of a minimizing sequence $(u_n) \subset \overline{\mathcal{N}}_\mu$ for problem \eqref{consmin}
such that
\begin{equation}\label{ekeland}
J_\mu(u_n) \leq J_\mu(u) + \frac{1}{n}\Vert u - u_n \Vert_{H^1(I_N)}, \qquad \mbox{ for every } u \in \overline{\mathcal{N}}_\mu,\, n \in \mathbb{N}.
\end{equation}
Our goal now is to show that, for $n$ large enough, $u_n \notin \overline{\mathcal{N}_\mu}\setminus
\mathcal{N}_\mu$.
Indeed, if this is the case, then \eqref{ekeland} implies in a standard way
that the constrained gradient $\nabla_{\mathcal{N}_\mu} J (u_n) \to 0$, as required.

We first observe that, in view of Lemma \ref{nehariprimeprop2}, the only possibilities
for $u_n$ to be on $\partial \mathcal{N}_\mu$ are given by
\begin{equation}\label{bordo1}
\int_{I^+_i} \dot u_n^2 = 2(c + c_\zeta), \qquad \mbox{ for some } i \in L
\end{equation}
or, for some $i \in L$,
\begin{equation}\label{bordo2}
\left\{
\begin{array}{l}
\dot u_n(\sigma_i^-) = \rho \\
u_n(\sigma_i) \geq 0,
\end{array}
\right.
\; \mbox{ or } \;
\left\{
\begin{array}{l}
\dot u_n(\sigma_i^-) = -\rho \\
u_n(\sigma_i) \leq 0,
\end{array}
\right.
\; \mbox{ or } \;
\left\{
\begin{array}{l}
\dot u_n(\tau_i^+) = -\rho \\
u_n(\tau_i) \geq 0,
\end{array}
\right.
\; \mbox{ or } \;
\left\{
\begin{array}{l}
\dot u_n(\tau_i^+) = \rho \\
u_n(\tau_i) \leq 0.
\end{array}
\right.
\end{equation}
The rest of this section will be devoted to exclude both the possibilities.
\medbreak
\noindent
\emph{\underline{Claim 1}. The possibility \eqref{bordo1} cannot occur, for $n$ large enough.}
\medbreak
\noindent
Indeed, assume that \eqref{bordo1} holds true for some $n$ and $i \in L$;
just for simplicity of notation, suppose moreover that $i=0$.
Take $p,q$ be as in \eqref{defpq}
and let $\bar{u} \in \mathcal{K}_0$, $\bar{u}>0$, according to \eqref{eq:intro_nehari}.
Then, define $F: \mathbb{R}^4 \to \mathbb{R}$ by setting
$$
F(\lambda,\alpha,\beta,\gamma) = \int_0^{\tau}  \left( \dot u^2(t;\lambda,\alpha,\beta) - a^+ u^4(t;\lambda,\alpha,\beta)\right)
- \gamma,
$$
where
$$
u(t;\lambda,\alpha,\beta) = \alpha q(t) +\beta p(t) + \lambda \bar{u}(t).
$$
Notice that $u(0;\lambda,\alpha,\beta) = \alpha$,
$u(\tau;\lambda,\alpha,\beta) = \beta$
and $u(t;1,0,0) = \bar{u}(t)$.
Since $F(1,0,0,0) = 0$ and
$$
\frac{\partial F}{\partial \lambda}(1,0,0,0) = -2 \int_{I^+_i}\dot{\bar{u}}^2 \neq 0,
$$
the implicit function theorem yields
the existence of a function $\lambda = \lambda(\alpha,\beta,\gamma)$,
defined a neighborhood of $(0,0,0)$ and having values in a neighborhood
of $1$, such that $F(\lambda(\alpha,\beta,\gamma),\alpha,\beta,\gamma) = 0$.

In view of Lemma \ref{nehariprimeprop},
we can assume, provided $\mu$ is large enough, that $\alpha_n= u_n(0)$, $\beta_n=u_n(\tau)$ and
$$
\gamma_n = u_n(\tau)\dot u_n(\tau^+)- u_n(0)\dot u_n(0^-)
$$
are as small as we wish. Then, we can define the function
$\widetilde{u}_n$ as $\widetilde{u}_n(t) = u_n(t)$
for $t \notin [0,\tau]$ and
$$
\widetilde{u}_n(t) = u(t;\lambda(\alpha_n,\beta_n,\gamma_n),\alpha_n,\beta_n,\gamma_n), \quad \mbox{ for } t \in [0,\tau].
$$
It is not difficult to see that
$\widetilde{u}_n \in \overline{\mathcal{N}}_\mu$. Moreover, since $\int_0^{\tau}\dot{\bar{u}}^2 = 4c$
and $\widetilde{u}_n$ is arbitrarily $H^1$-near to $\bar{u}$ for
$\mu$ large, we have that
$$
\epsilon_n = \frac{1}{4}\int_0^{\tau}\dot{\widetilde{u}}_n^2 - c
$$
can be made arbitrarily small. Hence
\begin{align*}
J_\mu(u_n) - J_\mu(\widetilde{u}_n) &=  \frac{1}{4}\int_{I^+_i} (\dot u_n^2
-\dot{\widetilde{u}}_n^2) = \frac{c + c_\zeta}{2} - (c +\epsilon_n)  \\
& \geq \frac{c_\zeta - c}{2} - \epsilon_n \geq \frac{c_\zeta - c}{4}.
\end{align*}
On the other hand, \eqref{ekeland} gives
$$
J_\mu(u_n) - J_\mu(\widetilde{u}_n) \leq \frac{1}{n} \Vert u_n - \widetilde{u}_n \Vert_{H^1(I_N)}
$$
and this is a contradiction for $n$ large enough
(of course, $\Vert u_n - \widetilde{u}_n \Vert_{H^1(I_N)}$ is bounded).
\begin{remark}\label{rem:property3}
The above argument holds also when $c_\zeta$ is replaced by any $c'>c$. Therefore, using
Lemma \ref{nehariprimeprop} and taking $\mu$ large enough (depending on $c'$), one can construct
minimizing sequences with the property that
\[
\int_{I^+_i} \dot u_n^2 < 2(c + c'), \qquad \mbox{ for every } i \in L.
\]
\end{remark}
\medbreak
\noindent
\emph{\underline{Claim 2}. The possibility \eqref{bordo2} cannot occur,
for $n$ large enough.}
\medbreak
\noindent
Indeed, assume that \eqref{bordo2} holds true for some $n$ and $i \in L$. We will give the details of the proof
in the case $u_n(\tau_i) \geq 0$ and $\dot u_n(\tau_i^+) = -\rho$ (the other can be treated with similar arguments);
moreover, for simplicity of notation we suppose that $i = -1$, that is,
we are dealing with the case
$$
u_n(\tau-T) \geq 0 \quad \mbox{ and } \quad \dot u_n((\tau-T)^+) = - \rho.
$$
According to Remark \ref{remarkone}, we finally assume that
$k = 1$ and, moreover, we deal with the most difficult case in which the first interval of positivity
on the right of $\tau-T$ is not in $L$, that is:
$0 \not\in L$ and $\pm1 \in L$. Summing up, we are led to construct a local variation
of $u_n$ in the interval
$$
I^+_{-1} \cup I^-_{-1} \cup I^+_0 \cup I^-_0 \cup I^+_1 = [-T,\tau + T].
$$
Roughly speaking, we will use the variation of Section \ref{subs:+} on $I^+_{-1}$ and $I^+_{1}$,
and the one of Section \ref{subs:-} on the interval $I^-_{-1} \cup I^+_0 \cup I^-_0$. We have to distinguish three situations. In the following, to simplify the notation, we set
$$
x_n = u_n(\tau-T) \quad \mbox{ and } \quad y_n = u_n(T).
$$
\smallbreak
If $\vert \dot u_n(T^-) \vert < \rho$, we argue as follows.
We define a family of functions $\widetilde{u}_n(\cdot,\xi)$, with $\xi \geq 0$,
by setting $\widetilde{u}_n(t,\xi) = u_n(t)$ for $t \notin [-T,\tau+T]$ and
\begin{itemize}
\item for $t \in [-T,\tau-T]$,
\begin{equation}\label{def1}
\widetilde{u}_n(t,\xi) = U\left(u_n(-T),
x_n-\xi,\dot u_n(-T^-),\dot{\bar{u}}_\mu((\tau-T)^+;x_n-\xi,y_n)\right)(t),
\end{equation}
\item for $t \in [\tau-T,T]$,
\begin{equation}\label{def2}
\widetilde{u}_n(t,\xi) = \bar{u}_\mu(t;x_n-\xi,y_n),
\end{equation}
\item for $t \in [T,\tau+T]$,
\begin{equation}\label{def3}
\widetilde{u}_n(t,\xi) = U
\left(y_n,u_n(\tau+T),\dot{\bar{u}}_\mu(T^-;x_n-\xi,y_n),
\dot u_n((\tau+T)^+)\right)(t).
\end{equation}
\end{itemize}
In formulas \eqref{def1} and \eqref{def3}, we have denoted (with some abuse of notation)
by $U$ a local variation, constructed as in Lemma \ref{localvar}, of $u_n$ on the interval
$[-T,\tau-T]$ and $[T,\tau+T]$, respectively; in \eqref{def2}, $\bar{u}_\mu$ is the function
constructed in Proposition \ref{propo:auxiliary}.
Notice that the map $\xi \mapsto \widetilde{u}_n(\cdot;\xi) \in H^1(I_N)$ is of class
$C^1$ and, of course, $\widetilde{u}_n(\cdot,0) = u_n$.

We claim that, for $\xi > 0$ small enough, the function
$\widetilde{u}_n(\cdot,\xi)$ lies in $\overline{\mathcal{N}}_\mu$, possibly touching
the constraint \eqref{bordo2} only outside the intervals we are considering.
To show this, the most delicate condition to be checked is the one concerning
$\dot{\widetilde{u}}_n((\tau-T)^+,\xi)$.
We thus observe that, in view of Proposition \ref{propsoluzione},
$$
\frac{d}{d\xi}\dot{\widetilde{u}}_n((\tau-T)^+,\xi)|_{\xi = 0} = -v_\mu((\tau-T)^+;x_n,y_n) > 0
$$
so that $\dot{\widetilde{u}}_n((\tau-T)^+,\xi) > \dot u_n((\tau-T)^+) = -\rho$
for small $\xi > 0$ as desired.

We now claim that
\begin{equation}\label{eke1}
J_\mu(\widetilde{u}_n(\cdot,\xi)) \leq J_\mu(u_n) - \left(\frac{\rho}{2}\right)\xi + o(\xi) \quad \mbox{ for } \xi \to 0^+.
\end{equation}
To prove this, we are going to show that
\begin{equation}\label{livello0}
\frac{d}{d\xi}J_\mu(\widetilde{u}_n(\cdot,\xi))|_{\xi = 0} \leq -\frac{\rho}{2};
\end{equation}
in the following, we use the notation
$J_{\mu,[t_1,t_2]}$ for the restriction of the action functional to
a subinterval $[t_1,t_2] \subset I_N$.
From \eqref{derliv1} of Proposition \ref{propsoluzione}, we have
\begin{equation}\label{livello1}
\frac{d}{d\xi}J_{\mu,[\tau-T,T]}(\widetilde{u}_n(\cdot,\xi))|_{\xi = 0}  =
\dot{\bar{u}}_\mu((\tau-T)^+;x_n,y_n) = \dot u_n((\tau-T)^+) = - \rho.
\end{equation}
On the other hand, setting $\beta(\xi) = x_n  + \xi$
and $\beta'(\xi) = \dot{\bar{u}}_\mu((\tau-T)^+;x_n-\xi,y_n)$,
we can compute
\begin{multline*}
\frac{d}{d\xi}J_{\mu,[-T,\tau-T]}(\widetilde{u}_n(\cdot,\xi))|_{\xi = 0}  =
\frac{\partial}{\partial\beta}J_{\mu,[-T,\tau-T]}(U) \frac{d\beta}{d\xi}
 \quad +  \frac{\partial}{\partial\beta'}J_{\mu,[-T,\tau-T]}(U) \frac{d\beta'}{d\xi} \\
 = \frac{\partial}{\partial\beta}J_{\mu,[-T,\tau-T]}(U)
 \quad -  \frac{\partial}{\partial\beta'}J_{\mu,[-T,\tau-T]}(U) \dot v_\mu((\tau-T)^+;x_n,y_n),
\end{multline*}
where we agree that the derivatives with respect to $\beta$ and $\beta'$ are evaluated at the
point $(u_n(-T),\dot u_n(-T),\beta(0),\beta'(0))$ while the
derivatives with respect to $\xi$ are evaluated at $\xi = 0$.
From \eqref{formula1} and \eqref{formula3} of Lemma \ref{localvar} we thus obtain
$$
\left\vert\frac{d}{d\xi}J_{\mu,[-T,\tau-T]}(\widetilde{u}_n(\cdot,\xi))|_{\xi = 0}\right\vert \leq
\frac{\rho}{8} - C_\eps x_n \dot v_\mu((\tau-T)^+;x_n,y_n).
$$
From \eqref{derivata3} of Lemma \ref{propsoluzione}
$$
\vert x_n  v_\mu((\tau-T)^+;x_n,y_n) \vert \leq \frac{2K}{2T-\tau} + 5\rho,
$$
so that, provided $\eps$ is small enough (which, in turns, requires maybe enlarging $\mu^*$)
$$
\left\vert\frac{d}{d\xi}J_{\mu,[-T,\tau-T]}(\widetilde{u}_n(\cdot,\xi))|_{\xi = 0}\right\vert \leq \frac{\rho}{4}.
$$
The arguments for $t \in [T,\tau+T]$ are even simpler, since the dependence of $U$
on $\xi$ is just via $\dot{\bar{u}}_\mu(T^-;x_n-\xi,y_n)$.
Arguing as above, we are thus led to consider the
term $C_\eps y_n v_\mu(T^-;x_n,y_n)$ which can be estimated using
\eqref{derivata1} of Lemma \ref{propsoluzione}. Hence
$$
\left\vert\frac{d}{d\xi}J_{\mu,[T,\tau+T]}(\widetilde{u}_n(\cdot,\xi))|_{\xi = 0} \right\vert \leq \frac{\rho}{4}
$$
so that, recalling \eqref{livello1}, \eqref{livello0} follows.

Finally, we verify that there exists a constant $S$, independent on $n$ (but depending on $\mu$, which however
is fixed in this argument), such that
\begin{equation}\label{eke2}
\Vert \widetilde{u}_n(\cdot,\xi) - u_n \Vert_{H^1(I_N)} \leq S \xi + o(\xi) \quad \mbox{ for } \xi \to 0^+.
\end{equation}
To show this, we just need to observe
that $\left\Vert \frac{d}{d\xi}\widetilde{u}_n(\cdot,\xi)|_{\xi = 0} \right\Vert_{H^1(I_N)}$
can be bounded independently on $n$
(we stress once more that the dependence on $\mu$ cannot be avoided, but it is irrelevant for
the arguments below, since throughout this section $\mu$ is fixed).
This is clear for $t \in [\tau-T,T]$
(since $x_n,y_n \in [-K,K]$) and essentially comes from
\eqref{formula0}
for $t \in [-T,\tau-T] \cup [T,\tau+T]$
(of course, to bound the derivative $d\widetilde{u}_n/d\xi$
one needs - as before - to estimate both $dU/d\beta$
and $d\beta/d\xi$; terms of the type $d\beta/d\xi$
are bounded since $x_n,y_n \in [-K,K]$).

We are now in position to conclude. Indeed, combining \eqref{eke1} and \eqref{eke2} yields
$$
J_\mu(\widetilde{u}_n(\cdot,\xi)) + \frac{1}{n}\Vert \widetilde{u}_n(\cdot,\xi) - u_n \Vert_{H^1(I_N)} \leq J_\mu(u_n) + \left(\frac{S}{n}- \frac{\rho}{2}\right)\xi + o(\xi)
$$
and this contradicts \eqref{ekeland} for $n$ large and fixed, and $\xi$ sufficiently small.
The proof is thus concluded in the case $\vert \dot u_n(T^-) \vert < \rho$.

\smallbreak
If $\dot u_n(T^-) = -\rho$ (notice that in this case it has to be
$u_n(T) < 0$) we argue exactly as before. The only difference is that showing that
$\widetilde{u}_n(\cdot,\xi) \in \overline{\mathcal{N}}_\mu$
now requires the further observation that
$$
\frac{d}{d\xi}\dot{\widetilde{u}}_n(T^-,\xi)|_{\xi = 0} = - v_\mu(T^-;x_n,y_n) > 0
$$
so that, for small $\xi > 0$, $\dot{\widetilde{u}}_n(T^-,\xi) >-\rho$ as well.
\smallbreak
If $u_n(T^-) = \rho$ (notice that in this case it has to be
$u_n(T) > 0$) a slightly different argument is needed,
since the same variation as before would lead to a function
$\widetilde{u}_n(\cdot,\xi) \notin \overline{\mathcal{N}}_\mu$.
Hence, we define here $\widetilde{u}_n(t,\xi) = u_n(t)$ for $t \notin [-T,\tau+T]$ and
\begin{itemize}
\item for $t \in [-T,\tau-T]$,
$$
\widetilde{u}_n(t,\xi) = U\left(u_n(-T),
x_n-\xi,\dot u_n(-T^-),\dot{\bar{u}}_\mu((\tau-T)^+;x_n-\xi,y_n-\xi)\right)(t),
$$
\item for $t \in [\tau-T,T]$,
$$
\widetilde{u}_n(t,\xi) = \bar{u}_\mu(t;x_n-\xi,y_n-\xi),
$$
\item for $t \in [T,\tau+T]$,
$$
\widetilde{u}_n(t,\xi) = U\left(y_n-\xi,
u_n(\tau+T),\dot{\bar{u}}_\mu(T^-;x_n-\xi,y_n-\xi),\dot u_n((\tau+T)^+)\right)(t).
$$
\end{itemize}
With this definition, we have that, for $\xi > 0$ small enough, the function
$\widetilde{u}_n(\cdot,\xi)$ lies in $\overline{\mathcal{N}}_\mu$.
Indeed, from \eqref{derivata3} of Lemma \ref{propsoluzione}
$$
\frac{d}{d\xi}\dot{\widetilde{u}}_n((\tau-T)^+,\xi)|_{\xi = 0} = -
\left(v_\mu((\tau-T)^+;x_n,y_n)+z_\mu((\tau-T)^+;x_n,y_n)\right) > 0
$$
so that $\dot{\widetilde{u}}_n((\tau-T)^+,\xi) > \dot u_n((\tau-T)^+) = -\rho$
(for small $\xi > 0$)
and a completely symmetric argument works for $\dot{\widetilde{u}}_n(T^-,\xi)$.

At this point, the arguments leading to \eqref{eke1} and \eqref{eke2}
are essentially the same as before up to minor modifications and will be omitted.

\section{Conclusion of the proofs
%of Theorems \ref{thper} and \ref{thmain}
}\label{secfinale}

As for Theorem \ref{thper}, we need to prove the positivity of the solutions found, the $C^1$ bound
\eqref{eq:C1bounds},
and the properties (P1), (P2), (P3), concerning the behavior for $\mu \to +\infty$.
\medbreak
Estimate \eqref{eq:C1bounds} can be easily deduced from Lemma \ref{nehariprimeprop2} and
property (C4) in the definition of
$\mathcal{N}_\mu$ by observing that, because of the convexity/concavity
properties of a solution $u$, $|\dot u(t)|$ has local maxima at $t$ if and only
$t\in\partial I_i^+$, with $i\in L$. Lemma \ref{nehariprimeprop2} (points (i) and (ii)) also implies
properties (P1) and (P2) (that is, the asymptotic
of the solutions on $I^-_i$ and $I^+_i$ with $i \not\in L$).
Property (P3) (that is, the behavior on $I^+_i$ with $i \in L$) follows from the following claim:
\textit{for every $\eps > 0$ there exists $\delta > 0$ such that,
for any $u$ solving $\ddot u + a^+(t) u^3 = 0$ on $[0,\tau]$ with
$r^2 \leq \int_0^\tau \dot u^2 \leq 4c+\delta$
and $u(t) \geq 0$ for $t \in [\zeta,\tau-\zeta]$, the following implication holds:}
$$
\vert u(0) \vert, \vert u(\tau) \vert \leq \delta \quad \Longrightarrow \quad
\mathrm{dist}_{W^{2,\infty}}(u,\mathcal{K}_0) \leq \eps.
$$
Indeed, by Remark \ref{rem:property3}, we can assume that our solutions satisfy such conditions.

To prove the claim, assume by contradiction that there
is $\eps^* > 0$ and a sequence $(u_n)$ of solutions
with $r^2 \leq \int_0^\tau \dot u_n^2 \to 4c$,
$u_n(0), u_n(\tau) \to 0$ and
\begin{equation}\label{assurdopositivo}
\mathrm{dist}_{W^{2,\infty}}(u_n,\mathcal{K}_0) > \eps^*.
\end{equation}
By standard arguments, $u_n$ converges in $W^{2,\infty}$
to a limit $\bar u \in \mathcal{K}_0$, contradicting
\eqref{assurdopositivo}.

\medbreak
We now deal with the positivity, splitting our arguments into three steps.
\smallbreak
\noindent
\emph{\underline{Step 1.} It holds $u(\sigma_i) > 0$ and $u(\tau_i) > 0$
for every $i \in L$.}
\smallbreak
\noindent
To see this, we first recall that $\mathcal{K}_0$ is compact.
Hence, there exists a constant $m>0$, $m<\rho$, such that
$$
m \leq \dot u(\sigma^+) \quad \mbox{ and } \quad \dot u(\tau^-) \leq -m \quad  \quad \mbox{ for every } u \in \mathcal{K}_0,\;u>0.
$$
Hence, since (P3) holds true, for the solution found in Theorem \ref{thper} we have that
$m/2 \leq \dot u(\sigma^+_i)$ and $u(\tau^-_i) \leq -m/2$ for every $i \in L$.
Convexity arguments now imply the conclusion
($u(\sigma_i) \leq 0$ or $u(\tau_i) \leq 0$ would imply a contradiction with (P2)).
\smallbreak
\noindent
\emph{\underline{Step 2.} It holds $u(t) > 0$ for every $t \in I^+_i$ with $i \in L$.}
\smallbreak
\noindent
This follows from Lemma \ref{pochizeri}.
\smallbreak
\noindent
\emph{\underline{Step 3.} It holds $u(t) > 0$ for every $t \in I_N$.}
\smallbreak
\noindent
This follows from (iii) of Proposition \ref{propo:auxiliary}, since
(P1) and (P2) imply that \eqref{rotazioni} holds true.
\medbreak
We now go back to Theorem \ref{thmain}.
If $\mathcal{L} \in \Sigmadue$ is a given sequence, the solutions
$u = u_{\mathcal{L},N}$ constructed in Theorem \ref{thper} for
$N$ larger and larger of course are $L^{\infty}$-bounded,
independently on $N$. Then, the elementary inequality
$$
\Vert \dot v \Vert_{L^{\infty}(\mathbb{R})} \leq 2 \Vert v \Vert_{L^{\infty}(\mathbb{R})} +
\Vert \ddot v \Vert_{L^{\infty}(\mathbb{R})}, \quad \mbox{ for every } v \in W^{2,\infty}(\mathbb{R}),
$$
(compare with \eqref{diselementare}) implies that
$\Vert u_{\mathcal{L},N} \Vert_{W^{2,\infty}(\mathbb{R})}$ is bounded
independently on $N$.
Hence, Ascoli-Arzelà Theorem shows that, for
$N \to +\infty$,
$u_{\mathcal{L},N}$ converges, locally in $W^{2,\infty}$, to a limit function
$u_{\mathcal{L}} \in W^{2,\infty}(\mathbb{R})$.
It is easily seen that such a function satisfies all the requirements
of Theorem \ref{thmain}.
\medbreak
Finally, once the proof of Theorem \ref{thmain} is concluded, we have that Theorem \ref{thmain_intro}
follows immediately. As far as Theorem \ref{thmain_intro_conv} is concerned, the $W^{2,\infty}(I)$ convergence
$u_\mu$ to $\bar u\in\mathcal{K}_{\mathcal{L}}$ follows from properties (P2) and (P3) if $I=I^+_i$, and from Lemma \ref{decay} if $I=[\tau_i+\delta,\sigma_{i+1}-\delta]$, for any $[\tau_i,\sigma_{i+1}]=I^-_i$ and $\delta>0$. The $H^1_{\mathrm{loc}}$ convergence descends from the $W^{2,\infty}$ one, together with (P1). To prove the H\"older convergence, we first observe that (P1), (P2), (P3) also imply that $u_\mu\to\bar u$ uniformly. On the other hand, since for every $s\neq t$
\[
\text{both } \quad \frac{|u_\mu(t) - u_\mu(s)|}{|t-s|} \leq \rho \quad \text{ and }\quad
\frac{|\bar u(t) - \bar u(s)|}{|t-s|}\leq \rho
\]
(recall Remark \ref{rem:K_soddisfa_rho}), we have
\[
\frac{|(u_\mu-\bar u)(t) - (u_\mu-\bar u)(s)|}{|t-s|^\alpha} \leq (2\rho)^\alpha
|(u_\mu-\bar u)(t) - (u_\mu-\bar u)(s)|^{1-\alpha}\to 0
\]
by uniform convergence, concluding the proof.

\begin{remark}\label{rem:general_results}
As we mentioned, our method can be applied also in different situations. To conclude,
we briefly discuss some of them, with an emphasis on the minor changes they require
to be dealt with.

\textbf{Changing sign solutions.} As $\mathcal{K}_0$ is defined by even conditions, it
contains both positive and negative solutions of $\ddot u + a^+(t) u^3=0$ in $H^1_0(0,\tau)$.
As a consequence, if $\mathcal{L}\in\{-1,0,1\}^{\ZZ}$, one can redefine $\mathcal{K}_{\mathcal{L}}$
as
\[
\mathcal{K}_{\mathcal{L}} = \left\{u\in W^{1,\infty}(\RR) : u|_{I^+_i} \in\mathcal{K}_i,\,
\pm u|_{I^+_i}>0
\text{ if }\mathcal{L}_i=\pm1\text{ and }u\equiv0\text{ elsewhere}\right\},
\]
and obtain changing sign solutions by singularly perturbing such a set. In doing that, one has
to change the definition of $\mathcal{N}_\mu$ (equation \eqref{eq:defnehari}), by substituting
property (C2) with
\begin{itemize}
\item[(C2')] $\pm u(t) > 0$ for every $t \in [\sigma_i + \zeta, \tau_i - \zeta]$ with
$\mathcal{L}_i = \pm1$.
\end{itemize}
Consequently, one can find solutions having the appropriate sign on each $I_i^+$,
when $\mathcal{L}_i = \pm1$, and changing sign exactly once in every connected components
of the complementary set.

\textbf{Periodic weights with more than two nodal intervals.} If $a$ is periodic, but
changes sign more than once in $[0,T]$ (but still a finite number of times), then the main problems
arise from a notational point of view, while the conceptual one should be clear: to start with,
one has to partition $[0,T]$ in the juxtaposition of consecutive intervals $I_i^\pm$, according
to the rule that, for every $I^-_i=[\tau_i,\sigma_{i+1}]$ and $\delta>0$ small, it holds
\[
\int_{\tau_i}^{\tau_i+\delta}a^-(t)\,dt >0
\qquad \text{and} \qquad
\int^{\sigma_{i+1}}_{\sigma_{i+1}-\delta}a^-(t)\,dt >0
\]
(notice that this is always possible, exactly as in the one-zero case, up to carefully choosing the points $\sigma_i$, $\tau_i$).
Accordingly, $\mathcal{N}_\mu$ should be defined by means of different (still finite) constants
$r_i$, $\rho_i$, $\zeta_i$. As a byproduct, this should prove multiplicity of periodic solutions.

\textbf{Non periodic weights.} In the same spirit of what we have just enlightened, one
may also consider non necessarily periodic weights $a$, though enjoying some uniform oscillatory
properties (in particular, the constants $r_i$, $\rho_i$, $|I^\pm_i|$, $i\in\ZZ$,
should be bounded below and above independently on $i$; furthermore, a uniform version of Lemma
\ref{pochizeri} should hold). By considering periodic truncation of such a weight on larger
and larger intervals, one should eventually obtain bounded entire solutions, with possible
recursivity properties.

\textbf{Dirichlet and Neumann boundary conditions.} Beyond periodic ones,
other boundary conditions on bounded intervals $I$ can be considered in our construction.
Dirichlet homogeneous ones can be obtained by replacing $H^1_{\mathrm{per}}(I)$ with
$H^1_0(I)$, thus recovering the results by Gaudenzi, Habets and Zanolin
\cite{GauHabZan03,GauHabZan04}.
In the same way, using $H^1(I)$, one can solve the Neumann homogeneous problem on $I$.
From this point of view, the Neumann problem, as well as the periodic one, contains the
further difficulty that the quadratic part of the action functional does not
correspond to an equivalent norm in the ambient Hilbert space.
\end{remark}

\small
\subsection*{Acknowledgments}
Work partially supported  by the PRIN-2012-74FYK7 Grant:
``Variational and perturbative aspects of nonlinear differential problems''.
and by the project ERC Advanced Grant  2013 n. 339958:
``Complex Patterns for Strongly Interacting Dynamical Systems - COMPAT''.


\begin{thebibliography}{50}

\footnotesize

\bibitem{Ack09}
{N. Ackermann, \emph{Long-time dynamics in semilinear parabolic problems with autocatalysis}, Recent progress on reaction-diffusion systems and viscosity solutions, 1--30, World Sci. Publ., Hackensack, NJ, 2009.}

\bibitem{AlaTar93}
{S. Alama and G. Tarantello, \emph{On semilinear elliptic equations with indefinite nonlinearities},
Calc. Var. Partial Differential Equations \textbf{1} (1993), 439--475.}

\bibitem{AmaLop98}
{H. Amann  and J. L{\'o}pez-G{\'o}mez, \emph{A priori bounds and multiple solutions for superlinear indefinite elliptic problems},
J. Differential Equations \textbf{146} (1998), 336--374.}


\bibitem{BerCapNir94}
{H. Berestycki, I. Capuzzo-Dolcetta and L. Nirenberg, \emph{Superlinear indefinite elliptic problems and nonlinear Liouville theorems},
Topol. Methods Nonlinear Anal. \textbf{4} (1994), 59--78.}

\bibitem{BerCapNir95}
{H. Berestycki, I. Capuzzo-Dolcetta and L. Nirenberg, \emph{Variational methods for indefinite superlinear homogeneous elliptic problems},
NoDEA Nonlinear Differential Equations Appl. \textbf{2} (1995), 553--572.}

\bibitem{BonGomHab05}
{D. Bonheure, J.M. Gomes and P. Habets, \emph{Multiple positive solutions of superlinear elliptic problems with sign-changing weight},
J. Differential Equations \textbf{214} (2005), 36--64.}

\bibitem{Bos11}
{A. Boscaggin, \emph{A note on a superlinear indefinite Neumann problem with multiple
positive solutions}, J. Math. Anal. Appl. \textbf{377} (2011), 259--268.}

\bibitem{BosZan12}
{A. Boscaggin and F. Zanolin, \emph{Positive periodic solutions of second order nonlinear
equations with indefinite weight: multiplicity results and
complex dynamics}, J. Differential Equations \textbf{252} (2012), 2922--2950.}

\bibitem{CapDamPap02}
{A. Capietto, W. Dambrosio and D. Papini, \emph{Superlinear indefinite equations on the real line and chaotic dynamics},
J. Differential Equations \textbf{181} (2002), 419--438.}

\bibitem{Dan88}
{E.N. Dancer, \emph{The effect of domain shape on the number of positive solutions of certain nonlinear equations I},
J. Differential Equations \textbf{74} (1988), 120--156.}

\bibitem{Dan90}
{E.N. Dancer, \emph{The effect of domain shape on the number of positive solutions of certain nonlinear equations II},
J. Differential Equations \textbf{87} (1990), 316--339.}

\bibitem{DeF89}
{D.G. de Figueiredo, Lectures on the Ekeland variational principle with applications and detours,
Tata Institute of Fundamental Research Lectures on Mathematics and Physics, 81. \emph{Published
for the Tata Institute of Fundamental Research, Bombay; by Springer-Verlag, Berlin}}, 1989. vi+96 pp. ISBN: 3-540-51179-2

\bibitem{FelZanPP}
{G. Feltrin and F. Zanolin, \emph{Multiple positive solutions for a superlinear problem: a topological approach}, preprint.}

\bibitem{GauHabZan03}
{M. Gaudenzi, P. Habets and F. Zanolin, \emph{An example of a superlinear problem with multiple positive solutions},
Atti Sem. Mat. Fis. Univ. Modena \textbf{51} (2003), 259--272.}

\bibitem{GauHabZan03b}
{M. Gaudenzi, P. Habets and F. Zanolin, \emph{Positive solutions of superlinear boundary value problems
with singular indefinite weight}, Commun. Pure Appl. Anal. \textbf{2} (2003), 411--423.}

\bibitem{GauHabZan04}
{M. Gaudenzi, P. Habets and F. Zanolin, \emph{A seven-positive-solutions theorem for a superlinear problem},
Adv. Nonlinear Stud. \textbf{4} (2004), 149--164.}

\bibitem{GirGom09}
{P.M. Gir{\~a}o and J.M. Gomes, \emph{Multibump nodal solutions for an indefinite superlinear elliptic problem},
J. Differential Equations \textbf{247} (2009), 1001--1012.}

\bibitem{GirGom09b}
{P.M. Gir{\~a}o and J.M. Gomes, \emph{Multi-bump nodal solutions for an indefinite non-homogeneous elliptic problem},
Proc. Roy. Soc. Edinburgh Sect. A \textbf{139} (2009), 797--817.}

\bibitem{GomLop00}
{R. {G{\'o}mez-Re{\~n}asco and J. L{\'o}pez-G{\'o}mez}, \emph{The effect of varying coefficients on the dynamics of a class of
superlinear indefinite reaction-diffusion equations}, J. Differential Equations \textbf{167} (2000), 36--72.}

\bibitem{HesKat80}
{P. Hess and T. Kato, \emph{On some linear and nonlinear eigenvalue problems with an indefinite weight function},
Comm. Partial Differential Equations \textbf{5} (1980), 999--1030.}

\bibitem{Lop00}
{J. L{\'o}pez-G{\'o}mez, \emph{Varying bifurcation diagrams of positive solutions for a class of indefinite superlinear
boundary value problems}, Trans. Amer. Math. Soc. \textbf{352} (2000), 1825--1858.}

\bibitem{MooNeh59}
{R.A. Moore and Z. Nehari,
\emph{Nonoscillation theorems for a class of nonlinear differential equations},
Trans. Amer. Math. Soc. \textbf{93} (1959), 30--52.}

\bibitem{Neh61}
{Z. Nehari,
\emph{Characteristic values associated with a class of non-linear second-order
differential equations},
Acta Math. \textbf{105} (1961), 141--175.}

\bibitem{NorVer13}
{B. Noris and G. Verzini, \emph{A remark on natural constraints in variational methods and an
application to superlinear Schr\"odinger systems}, J. Differential Equations \textbf{254} (2013), 1529--1547.}

\bibitem{OrtVer04}
{R. Ortega and G. Verzini, \emph{A variational method for the existence of bounded solutions of a sublinear forced oscillator},
Proc. London Math. Soc. (3) \textbf{88} (2004), 775--795.}

\bibitem{PapZan02}
{D. Papini and F. Zanolin, \emph{Periodic points and chaotic-like dynamics of planar maps associated to nonlinear
Hill's equations with indefinite weight}, Georgian Math. J. \textbf{9} (2002), 339--366.}

\bibitem{SoaVer14}
{N. Soave and G. Verzini, \emph{Bounded solutions for a forced bounded oscillator without friction},
J. Differential Equations \textbf{256} (2014), 2526--2558.}

\bibitem{TerVer00}
{S. Terracini and G. Verzini, \emph{Oscillating solutions to second-order ODEs with indefinite superlinear nonlinearities},
Nonlinearity \textbf{13} (2000), 1501--1514.}

\bibitem{TerVer09}
{S. Terracini and G. Verzini, \emph{Multipulse phases in $k$-mixtures of Bose-Einstein condensates},
Arch. Ration. Mech. Anal. \textbf{194} (2009), 717--741.}

\bibitem{Ver03}
{G. Verzini, \emph{Bounded solutions to superlinear ODEs: a variational approach},
Nonlinearity \textbf{16} (2003), 2013--2028.}


\end{thebibliography}
\end{document}